\DeclareMathAlphabet{\mathpzc}{OT1}{pzc}{m}{it}
\def\cA{\mathscr{A}}
\def\cC{\mathscr{C}}
\def\cD{\mathscr{D}}
\def\cE{\mathscr{E}}
\def\cF{\mathscr{F}}
\def\cR{\mathscr{R}}
\def\cT{\mathscr{T}}
\def\cU{\mathscr{U}}
\def\cV{\mathscr{V}}
\def\BQ{\mathbb{Q}}
\def\BZ{\mathbb{Z}}
\def\fP{\mathfrak{P}}
\def\fQ{\mathfrak{Q}}
\def\fT{\mathfrak{T}}
\def\fU{\mathfrak{U}}
\def\fV{\mathfrak{V}}
\def\fc{\mathfrak{c}}
\def\fd{\mathfrak{d}}
\def\fp{\mathfrak{p}}
\def\ft{\mathfrak{t}}
\def\fu{\mathfrak{u}}
\def\add{\operatorname{add}}
\def\adots{\mathinner{\mkern1mu\raise1.0pt\vbox{\kern7.0pt\hbox{.}}\mkern2mu\raise4.0pt\hbox{.}\mkern2mu\raise7.0pt\hbox{.}\mkern1mu}}
\def\ast{{\textstyle *}}
\def\CC{\rho}
\def\coind{\operatorname{coind}}
\def\Coker{\operatorname{Coker}}
\def\dim{\operatorname{dim}}
\def\End{\operatorname{End}}
\def\Ext{\operatorname{Ext}}
\def\flength{\mathsf{fl}}
\def\Free{\operatorname{Free}}
\def\funct{G}
\def\Gr{\operatorname{Gr}}
\def\Hom{\operatorname{Hom}}
\def\ind{\operatorname{ind}}
\def\K{\operatorname{K}}
\def\mod{\mathsf{mod}}
\def\Mod{\mathsf{Mod}}
\def\objects{\operatorname{obj}}
\def\SL{\operatorname{SL}}
\def\split{\operatorname{split}}
\def\top{\operatorname{top}}
\newcommand{\cat}{\cC}
\newcommand{\shift}{\Sigma}
\newcommand{\cts}{\cT}
\newcommand{\indt}{T}
\newcommand{\obj}{t}
\newcommand{\modt}{\mod\cts}
\newcommand{\kosp}{K_0^{\split}(\cts)}
\newcommand{\ko}{K_0(\modt)}
\newcommand{\ctsu}{\cU}
\newcommand{\indu}{U}
\newcommand{\obju}{u}
\newcommand{\modu}{\mod\cts/\ctsu}
\newcommand{\kou}{K_0(\modu)}
\newcommand{\kospu}{K_0^{\split}(\cts/\ctsu)}
\newcommand{\catu}{\cat_{\indu}}
\newcommand{\shiftu}{\shift_{\indu}}
\newcommand{\indext}{\operatorname{ind}_{\cts}}
\newcommand{\coindext}{\operatorname{coind}_{\cts}}
\newcommand{\indexu}{\operatorname{ind}_{\cts/\ctsu}}
\newcommand{\coindexu}{\operatorname{coind}_{\cts/\ctsu}}
\newcommand{\xs}{x}
\newcommand{\gre}{\operatorname{Gr}_e}
\newcommand{\fl}{\rightarrow} \newcommand{\gfl}{\longrightarrow}
\newcommand{\dr}{\ar@{->}[r]}
\newcommand{\downeq}{\ar@{=}[d]}
\newcommand{\down}{\ar@{->}[d]}
\newcommand{\drm}{\ar@{^{(}->}[r]}
\newcommand{\ddrm}{\ar@{^{(}->}[rr]}
\newcommand{\downepi}{\ar@{->>}[d]}
\newcommand{\ddownepi}{\ar@{->>}[dd]}
\newtheoremstyle{bfupright head,slanted body}
                {2.5ex}{1ex}
                {\slshape}{0pt}{\bfseries}{.}{0.5em}
                {{\mdseries \thmnumber{(#2) }}\thmnote{#3}}
\newtheoremstyle{bfupright head,upright body}
                {2.5ex}{1ex}
                {\upshape}{0pt}{\bfseries}{.}{0.5em}
                {{\mdseries \thmnumber{(#2) }}\thmnote{#3}}
\newtheoremstyle{bfit head,upright body}
                {2.5ex}{1ex}
                {\upshape}{0pt}{\upshape}{.}{0.5em}
                {{\mdseries\thmnumber{(#2) }}
                {\bfseries\itshape\thmnote{\negthickspace#3}}}
\newtheoremstyle{it head,upright body}
                {2.5ex}{1ex}
                {\upshape}{0pt}{\upshape}{.}{0.5em}
                {{\mdseries\thmnumber{(#2) }}
                {\itshape\thmnote{\negthickspace#3}}}
\newtheoremstyle{fixed bf head,slanted body}
                {2.5ex}{1ex}{\slshape}
                {0pt}{\bfseries}{.}{0.5em}
                {{\mdseries \thmnumber{(#2) }}\thmname{#1}\thmnote{ (#3)}}
\newtheoremstyle{fixed bf head,upright body}
                {2.5ex}{1ex}{\upshape}
                {0pt}{\bfseries}{.}{0.5em}
                {{\mdseries \thmnumber{(#2) }}\thmname{#1}\thmnote{ (#3)}}
\newtheoremstyle{indented paragraph}
                {2.5ex}{1ex}
                {\upshape}{\parindent}{\upshape}{}{0pt}
                {\thmnote{#3 }}
\theoremstyle{fixed bf head,slanted body}
\newtheorem{Lemma}{Lemma}[section]
\newtheorem{Proposition}[Lemma]{Proposition}
\newtheorem{Theorem}[Lemma]{Theorem}
\newtheorem{Corollary}[Lemma]{Corollary}
\newtheorem*{Lemma*}{Lemma}
\newtheorem*{Proposition*}{Proposition}
\newtheorem*{Theorem*}{Theorem}
\newtheorem*{Corollary*}{Corollary}
\theoremstyle{fixed bf head,upright body}
\newtheorem{Definition}[Lemma]{Definition}
\newtheorem{Example}[Lemma]{Example}
\newtheorem{Remark}[Lemma]{Remark}
\newtheorem{Setup}[Lemma]{Setup}
\newtheorem{Notation}[Lemma]{Notation}
\newtheorem*{Definition*}{Definition}
\newtheorem*{Example*}{Example}
\newtheorem*{Remark*}{Remark}
\newtheorem*{Observation*}{Observation}
\newtheorem*{Setup*}{Setup}
\newtheorem*{Conjecture*}{Conjecture}
\newtheorem*{Construction*}{Construction}
\newtheorem*{Question*}{Question}
\newtheorem*{Notation*}{Notation}
\theoremstyle{bfupright head,slanted body}
\newtheorem*{res*}{}
\theoremstyle{bfit head,upright body}
\newtheorem*{com*}{}
\theoremstyle{bfupright head,upright body}
\newtheorem{bfhpg}[Lemma]{}               
\newtheorem*{bfhpg*}{}
\theoremstyle{it head,upright body}
\newtheorem*{ithpg*}{}
\theoremstyle{indented paragraph}
\begin{document}

\setlength{\parindent}{0pt}
\setlength{\parskip}{7pt}

\title[Infinite Caldero-Chapoton]{A Caldero-Chapoton map for
infinite clusters}

\author{Peter J\o rgensen}
\address{School of Mathematics and Statistics,
Newcastle University, Newcastle upon Tyne NE1 7RU,
United Kingdom}
\email{peter.jorgensen@ncl.ac.uk}
\urladdr{http://www.staff.ncl.ac.uk/peter.jorgensen}

\author{Yann Palu}
\address{Department of Pure Mathematics,
University of Leeds, Leeds LS2 9JT, United Kingdom}
\email{ypalu@maths.leeds.ac.uk}
\urladdr{http://www.amsta.leeds.ac.uk/~ypalu}

\keywords{Calabi-Yau reduction, cluster category, cluster map, cluster
structure, cluster tilting subcategory, coindex, Dynkin type
$A_{\infty}$, exchange pair, exchange triangle, Fomin-Zelevinsky
mutation, Grassmannian, index, K-theory, $\SL_2$-tiling}

\subjclass[2010]{13F60, 16G10, 16G20, 16G70, 18E30}

\begin{abstract} 
  We construct a Caldero--Chapoton map on a triangulated category with
  a cluster tilting subcategory which may have infinitely many
  indecomposable objects.

  The map is not necessarily defined on all objects of the
  triangulated category, but we show that it is a (weak) cluster map
  in the sense of Buan-Iyama-Reiten-Scott.  As a corollary, it induces
  a surjection from the set of exceptional objects which can be
  reached from the cluster tilting subcategory to the set of cluster
  variables of an associated cluster algebra.

  Along the way, we study the interaction between Calabi--Yau
  reduction, cluster structures, and the Caldero--Chapoton map.

  We apply our results to the cluster category $\cD$ of Dynkin type
  $A_{\infty}$ which has a rich supply of cluster tilting
  subcategories with infinitely many indecomposable objects.  We show
  an example of a cluster map which cannot be extended to all of
  $\cD$.

  The case of $\cD$ also permits us to illuminate results by
  Assem--Reutenauer--Smith on $\SL_2$-tilings of the plane.
\end{abstract}

\maketitle

\setcounter{section}{-1}
\section{Introduction}
\label{sec:introduction}

The Caldero--Chapoton map was introduced in \cite{CC}. It formalises the
connection between the cluster algebras of Fomin--Zelevinsky \cite{FZ}
and the cluster categories of Buan--Marsh--Reineke--Reiten--Todorov
\cite{BMRRT}.

Cluster algebras are commutative rings with connections to areas as
diverse as Calabi-Yau algebras, integrable systems, Poisson geometry,
and quiver representations, see \cite{Z}.  Cluster categories are,
loosely speaking, certain categories of re\-pre\-sen\-ta\-ti\-ons of
finite dimensional algebras which were introduced to ``categorify''
cluster algebras (see the surveys~\cite{KellerSurvey1},
\cite{KellerSurvey2} and the references given there).
Indeed, using the Caldero--Chapoton map, Caldero
and Keller~\cite{CK2} established a bijection between the
indecomposable rigid objects of a cluster ca\-te\-go\-ry and the
cluster variables of the corresponding cluster algebra (see also the
appendix of \cite{BCKMRT}).

The Caldero--Chapoton map was generalized in~\cite{Palu} to
$2$-Calabi--Yau triangulated categories with cluster tilting objects.
However, it is also interesting to consider ca\-te\-go\-ri\-es which
have no cluster tilting objects but only cluster tilting
subcategories.  In this situation, the cluster tilting subcategories
have infinitely many in\-de\-com\-po\-sa\-ble objects, so one cannot
take the direct sum and thereby obtain cluster tilting objects; see
for instance \cite{BIRS} and \cite{KellerReiten2}.  A relevant example
of this is the cluster category $\cD$ of Dynkin type $A_\infty$,
studied in~\cite{HJ}, which has no cluster tilting objects, but does
have a rich supply of cluster tilting subcategories.

In this paper, we show that the methods of~\cite{Palu} apply to
$2$-Calabi--Yau triangulated categories with cluster tilting
subcategories.  Let $k$ be an algebraically closed field, let $\cat$
be a $k$-linear Hom-finite Krull--Schmidt triangulated category which
is $2$-Calabi--Yau, and let $\cts$ be a cluster tilting subcategory
\cite{KellerReiten2}.

In Section~\ref{sec:CC}, we define a Caldero--Chapoton map associated
with $\cts$.  In this context, the Caldero--Chapoton map may not be
defined on all the objects of the category $\cat$. It is therefore not
a cluster character in the sense of \cite{Palu}.

Nonetheless, in Section~\ref{sec:Cluster_maps} we prove that it is a
(weak) cluster map in the sense of~\cite{BIRS} (Theorem
\ref{thm:cluster_map}).  As a corollary, it induces a surjection from
the set of reachable exceptional objects in $\cat$ to the set of
cluster variables of an associated cluster algebra.

In Section~\ref{sec:HJ}, for the particular case of the category
$\cD$, this yields an explicit example of a cluster map in the sense
of~\cite{BIRS} which cannot be nicely extended to a cluster character
(Theorem~\ref{thm:nogo}).

Sections~\ref{sec:IY} through \ref{sec:locally bounded} are devoted
to Calabi--Yau reductions. After recalling the definition and some
results from~\cite{IyamaYoshino}, we describe the connection between
modules over
cluster tilting subcategories of $\cat$ and its Calabi-Yau reductions.
We also study the interactions between Calabi--Yau reduction, cluster
structures, and the Caldero--Chapoton map.  There are two purposes to
this: First, the results are used for the category $\cD$ to show
positivity of its cluster maps; this is what makes some of them
impossible to extend to cluster characters.  Secondly, the results are
used in Section~\ref{sec:locally bounded} to prove an independent
result: If the cluster tilting subcategory $\cts$ is locally bounded,
then the formulae for the Caldero--Chapoton map and for the cluster
character of a well-chosen Calabi--Yau reduction coincide.

The final Section \ref{sec:tilings} has an application of our results
to the study of $\SL_2$-tilings as introduced by Assem, Reutenauer,
and Smith in \cite{ARS}.

\medskip
{\em Acknowledgement. }  We thank Bernhard Keller for suggesting that
the Caldero-Chapoton map can be made to work in the context of cluster
tilting subcategories with infinitely many indecomposable objects.

This enabled us to improve the results in a previous version of the
paper which used Calabi-Yau reduction to go down to subquotient
categories with cluster tilting objects, and ``bootstrapping'' to lift
their cluster characters to the original category.  We are grateful to
Osamu Iyama, Ralf Schiffler, Dong Yang, and Andrei Zelevinsky for
comments to the previous version.

We thank Idun Reiten for answering a question on \cite{BIRS}.

The second-named author would like to thank Robert Marsh
and the EPSRC for his current post-doctoral position.
He would also like to thank Raf Bocklandt for an invitation
to Newcastle during which part of this work was carried out.

\section{The Caldero-Chapoton formula}
\label{sec:CC}

In this section, we define a version of the Caldero-Chapoton map for cluster
tilting subcategories with infinitely many indecomposable objects.
Before doing so, we will review the necessary ingredients to make sure
that everything works as needed in this situation.

\begin{Setup}
\label{set:blanket}
Let $k$ be an algebraically closed field.

In Sections \ref{sec:CC} through \ref{sec:locally bounded}, the
following notation is used:

$\cat$ is a $k$-linear $\Hom$-finite Krull--Schmidt triangulated
category which is $2$-Calabi-Yau.  Its shift functor is $\shift$.

$\cts$ is a cluster tilting subcategory of $\cat$ and $\indt = \ind
\cts$ is its set of (isomorphism classes of) indecomposable objects.
Note that $\cts = \add \indt$.
\end{Setup}

\begin{bfhpg}
[Modules over $\cts$]
\label{bfhpg:funct}
Following~\cite{KellerReiten2}, the category $\Mod\,\cts$ of
$\cts$-modules is the category of $k$-linear contravariant functors
from $\cts$ to the category of $k$-vector spaces.  A $\cts$-module $M$
is said to be finitely presented if it admits a presentation by
representable functors:
\[
  \cts(-,t_1) \rightarrow \cts(-,t_0) \rightarrow M \rightarrow 0.
\]
In particular, finitely presented modules are functors from $\cts$ to
the category of finite dimensional $k$-vector spaces.  The full
subcategory of finitely presented modules is denoted by $\mod\,\cts$,
and is an abelian category (see~\cite[2.1]{KellerReiten2}).

By \cite[thm.\ 2.2]{BMR}, \cite[2.1, prop.]{KellerReiten2},
\cite[cor.\ 4.4]{KoenigZhu} the functor
\begin{center}
\begin{tabular}{ccc}
  $\cat$ & $\stackrel{\funct}{\longrightarrow}$ & $\mod\,\cts$, \\[3mm]
  $c$ & $\longmapsto$ & $\cat(-,\Sigma c)\big|_{\cts}$
\end{tabular}
\end{center}
induces an equivalence of categories $\cat/[\cts]
\simeq \mod\,\cts$.  The square brackets denote the ideal of morphisms
which factor through an object in $\cts$.
\end{bfhpg}

\begin{bfhpg}
[The results of \cite{Palu}]
\label{bfhpg:Palu}
A number of results were established in \cite{Palu} under the
conditions of Setup \ref{set:blanket}, with the additional assumption
that the set $\indt$ was finite.  This assumption is unnecessary for many
of the results.  To wit, lemmas 3.1, 3.3, 4.2 and proposition 4.3 of
\cite{Palu} certainly remain true under Setup \ref{set:blanket} alone,
with the same proofs.

Moreover, indices and coindices of objects in $\cat$ were introduced in
\cite{Palu} and further studied in~\cite{DK}.  If $c$ is an object of $\cat$, then pick distinguished
triangles
\[
  \obj_1 \rightarrow \obj_0 \rightarrow c, \;\;\;\;
  c \rightarrow \Sigma^2 \obj^0 \rightarrow \Sigma^2 \obj^1
\]
with $\obj_i, \obj^i \in \cts$; cf.\ \cite[2.1 prop]{KellerReiten2} or \cite[lem.\ 3.2(1)]{KoenigZhu}.
Define the index and the coindex relative to $\cts$ as the classes
\[
  \ind_{\cts} c = [\obj_0] - [\obj_1], \;\;\;\;
  \coind_{\cts} c = [\obj^0] - [\obj^1]
\]
in the split Grothendieck group $\K_0^{\split}(\cts)$.  Here the
square brackets indicate $\K$-classes.  The subscripts of $\ind_\cts$
and $\coind_\cts$ will be omitted when there is no danger of
confusion.  This definition differs slightly from
\cite{Palu} where index and coindex took values in the usual
Grothendieck group $\K_0(\mod\,\cts)$.  However, \cite{Palu}, lemma
2.1 (parts 1, 2, 4) and proposition 2.2 remain true under Setup
\ref{set:blanket} alone, again with the same proofs.
\end{bfhpg}

\begin{bfhpg}
[Finite length objects]
The simple objects in $\Mod\,\cts$ are precisely the objects $S_\obj = \top \cts(-,\obj)$ for
$\obj \in \indt$, see \cite[prop.\ 2.3(b)]{AusRepII}.

For $\obj \in \indt$, there exists a right almost split morphism $b
\rightarrow \obj$ in $\cts$, and hence $S_\obj \in \mod\,\cts$ by
\cite[cor.\ 2.6]{AusRepII}.  Let us show that $b \rightarrow \obj$
exists.

Set $\indt' = \indt \setminus \obj$ and write $\cA = \add \indt'$.  By
\cite[thm.\ 5.3]{IyamaYoshino}, there exists a unique indecomposable
$\obj^\ast \neq \obj$ in $\cat$ for which $\cts^\ast = \add(\indt'
\cup \obj^\ast)$ is a cluster tilting subcategory.  Moreover,
$(\cts^\ast , \cts)$ is a so-called $\cA$-mutation pair by \cite[thm.\
5.3]{IyamaYoshino} again.  One consequence is that there exists a
distinguished triangle $x \rightarrow a \rightarrow \obj \rightarrow
\Sigma x$ in $\cat$ with $x \in \cts^\ast$ and $a \in \cA$; see
\cite[def.\ 2.5]{IyamaYoshino}.  If $s \neq \obj$ is an indecomposable
object in $\indt$, then $s \in \cts^\ast$ and so $\cat(s,\Sigma x) =
0$.  It follows that any morphism $s \rightarrow \obj$ factors through
$a \rightarrow \obj$.  Now pick $n$ morphisms in $\cat(\obj,\obj)$
which generate the radical of $\cat(\obj,\obj)$ as a
right-$\cat(\obj,\obj)$-module.  They define a morphism $\obj^n
\rightarrow \obj$, and it is not hard to see that $a \oplus \obj^n
\rightarrow \obj$ is right almost split.

So each $S_\obj$ is in $\mod\,\cts$.  Conversely, let $S$ be a simple
object of $\mod\,\cts$.  The category $\mod\,\cts$ has enough
projectives, so there is an epimorphism from a projective object,
$\cts(-,\obj) \rightarrow S$, where $\obj \in \cts$; we can clearly assume
that $\obj$ is indecomposable.  This implies that $S_\obj$ is a quotient of
$S$ in $\Mod\,\cts$, but both of these objects are simple objects of
$\mod\,\cts$ so it follows that $S \cong S_\obj$.

Hence $\Mod\,\cts$ and $\mod\,\cts$ have the same simple objects, namely
the $S_\obj$ for $\obj \in \indt$.

Using that the inclusion functor $\mod\,\cts \rightarrow \Mod\,\cts$ is
exact, see \cite[sec.\ III.2]{AusRepDim}, and that $\mod\,\cts$ is
closed under extensions in $\Mod\,\cts$, it follows that $\Mod\,\cts$
and $\mod\,\cts$ have the same finite length objects.  We denote the
category of these by $\flength\,\cts$.
\end{bfhpg}

\begin{bfhpg}
[K-theory]
\label{bfhpg:K}
\begin{enumerate}

  \item  It is clear that there is an isomorphism
\[
  \K_0^{\split}(\cts) \cong \Free(\indt).
\]

\smallskip

  \item  There is the following homomorphism,
\begin{center}
\begin{tabular}{rcl}
  $\K_0(\mod\,\cts)$ & $\stackrel{\theta_{\cts}}{\longrightarrow}$ & $\K_0^{\split}(\cts)$, \\[3mm]
  $[\funct c]$ & $\longmapsto$ & $\coind_{\cts} \Sigma c - \ind_{\cts} \Sigma c$,
\end{tabular}
\end{center}
where the subscript of $\theta_{\cts}$ will be omitted when
there is no danger of confusion.  Namely, the expression $\coind
\Sigma c - \ind \Sigma c$ depends only on $\funct c$ by \cite[lem.\
2.1(4)]{Palu}, and respects the relations of $\K_0(\mod\,\cts)$ by
\cite[lem.\ 3.1 and prop.\ 2.2]{Palu}.

The embedding $\flength\,\cts \rightarrow \mod\,\cts$ induces a
homomorphism $\K_0(\flength\,\cts) \rightarrow \K_0(\mod\,\cts)$ which
we can compose with $\theta$; by abuse of notation, the composition
will also be denoted $\theta$.

We will use this homomorphism in place of the bilinear form $\langle
-,- \rangle_a$ of \cite{Palu}. If the quiver of $\cts$ is locally
finite and has no loops, then they are related via the following
formula:
\[
 \theta(-) = \sum_{\obj \in \indt} \langle S_\obj ,- \rangle_a [\obj].
\]

Note that if $\cts$ belongs to a cluster structure on $\cat$ in the
sense of \cite[sec.\ II.1]{BIRS}, then each $\obj \in \indt$ sits in
so-called exchange triangles $\obj^\ast \rightarrow c \rightarrow
\obj$, $\obj \rightarrow c' \rightarrow \obj^\ast$ where $\obj^\ast$
is the unique replacement of $\obj$ which leaves $\cts$ a cluster
tilting subcategory, and where $c, c' \in \add(\indt \setminus
\obj)$. In that case we have $S_\obj = G \obj^\ast$ and a computation
shows
\[
  \theta([S_\obj]) = [c] - [c'].
\]

\smallskip

  \item We introduce variables $x_\obj$ for $\obj \in \indt$.  If
  $\alpha = \sum_\obj \alpha_\obj \, [\obj]$ is an element of
  $\K_0^{\split}(\cts)$, then we write
\[
  x^{\alpha} = \prod_\obj x_\obj^{\alpha_\obj};
\]
this is a well defined monomial which we view as an element of the
rational function field $\BQ(x_\obj)_{\obj \in \indt}$ generated by the
$x_\obj$.

\end{enumerate}
\end{bfhpg}

\begin{bfhpg}
[Grassmannians]
\label{bfhpg:Grassmannians}
For each $\obj \in \indt$ the functor $S_\obj \in \flength\,\cts$ is
zero at each indecomposable different from $\obj$.  If $M \in
\flength\,\cts$ satisfies $[M] = e$ in $\K_0(\flength\,\cts)$ where
$e = \sum_\obj e_\obj \, [S_\obj]$, then $M$ has a composition series
with quotients $S_\obj$ for which $e_\obj \neq 0$.  It follows that
the functor $M$ is zero at each indecomposable $\obj$ with $e_\obj =
0$.  So $M$ is supported at the finitely many $\obj \in \indt$ with
$e_\obj \neq 0$.

Now let $N \in \mod\,\cts$ and consider subfunctors $M \subseteq N$
where $M$ has finite length and satisfies $[M] = e$ in
$\K_0(\flength\,\cts)$.  Then $M$ is supported at the finitely many
$\obj \in \indt$ with $e_\obj \neq 0$, so $M$ can be viewed as an
$m$-dimensional sub-vector space of the finite dimensional $k$-vector
space $E = \bigoplus_{ \{ \obj | e_\obj \neq 0 \} } N(\obj)$, and $m$ is
determined by $e$.  There is a Grassmannian of $m$-dimensional
subspaces of $E$, and it follows that there is a Grassmannian
$\Gr_e(N)$ whose points parametrize the subfunctors $M
\subseteq N$ where $M$ has finite length and satisfies $[M] = e$.
Both Grassmannians are algebraic spaces.

It is also possible to introduce auxiliary algebraic spaces in the
manner of \cite[sec.\ 5.1]{Palu}.  Namely, let $\ell, m \in \cat$ be
such that $\funct \ell$, $\funct m$ have finite length, and suppose
that there are distinguished triangles
\begin{equation}
\label{equ:triangles}
  m \stackrel{i}{\rightarrow} b \stackrel{p}{\rightarrow} \ell, \;\;\;\;
  \ell \stackrel{i'}{\rightarrow} b' \stackrel{p'}{\rightarrow} m
\end{equation}
in $\cat$.  Note that $\funct b$, $\funct b'$ also have finite length.
Let $e, f \in \K_0(\flength\,\cts)$ and set
\begin{align*}
  X_{e,f} & = \{\, E \subseteq \funct b
                  \,|\, [(\funct i)^{-1}E] = e,\: [(\funct p)E] = f \,\}, \\
  Y_{e,f} & = \{\, E' \subseteq \funct b'
                  \,|\, [(\funct i')^{-1}E'] = f,\: [(\funct p')E'] = e \,\}.
\end{align*}
There are morphisms of algebraic spaces
\begin{center}
\begin{tabular}{rcrclcl}
  $X_{e,f}$ & $\rightarrow$ & $\Gr_e(\funct m)$ & $\times$ & $\Gr_f(\funct \ell)$ & $\leftarrow$ & $Y_{e,f}$, \\[3mm]
  $E$ & $\mapsto$ & $\big( (\funct i)^{-1}E$ & , & $(\funct p)E \big)$, & & \\[3mm]
  & & $\big( (\funct p')E'$ & , & $(\funct i')^{-1}E' \big)$ & $\mapsfrom$ & $E'$
\end{tabular}
\end{center}
for which each fibre is an affine space $k^n$.  This property of the
fibres is due to \cite{CC}, see their lemma 3.8, and it can be proved
in the present situation by following the proof of \cite[lem.\
18]{Palu2} to obtain a free transitive action of the algebraic group
$k^n$ on a fibre.  In fact, for a point $E \in X_{e,f}$ one sets $V =
(Gp)E$ and $W = \Coker((\funct i)^{-1}E \hookrightarrow \funct m)$,
and it is $(\mod\,\cts)(V,W)$, viewed as an algebraic group isomorphic
to $k^n$, which acts on the fibre containing $E$.  Note that $V$ and
$W$ have finite length in $\mod\,\cts$, so $(\mod\,\cts)(V,W)$ is
indeed finite dimensional over $k$.

There are now two cases of interest.

{\em Case 1: The triangles \eqref{equ:triangles} are split.  } Then
there is a (split) exact sequence $0 \rightarrow \funct m
\stackrel{\funct i}{\rightarrow} \funct b \stackrel{\funct
p}{\rightarrow} \funct \ell \rightarrow 0$ which implies
\[
  \Gr_g(\funct b) = \coprod_{e+f=g} X_{e,f}.
\]
Moreover, the morphism $X_{e,f} \rightarrow \Gr_e(\funct m) \times
\Gr_f(\funct \ell)$ is surjective since $M \oplus L$ is sent to
$(M,L)$.  The fibres are affine spaces, so we get
\[
  \chi(\Gr_e(\funct m) \times \Gr_f(\funct \ell)) = \chi(X_{e,f})
\]
where $\chi$ is (ordinary or $\ell$-adic) Euler characteristic.

The last two equations give the first and third equality in the
following key formula.
\begin{align}
\label{equ:CC}
\nonumber
  \sum_{e+f=g} \chi(\Gr_e(\funct m) \times \Gr_f(\funct \ell))
  & = \sum_{e+f=g} \chi(X_{e,f}) \\
  & = \chi \Big( \coprod_{e+f=g} X_{e,f} \Big)
    = \chi(\Gr_g(\funct b))
\end{align}
Note that the sums are finite since the relevant algebraic spaces are
only non-empty for $e$, $f$, $g$ in the positive cone of
$\K_0(\flength\,\cts)$.

{\em  Case 2:  We have $\Ext_{\cat}^1(\ell,m) = 1$ and the triangles
\eqref{equ:triangles} are not split.  }  The morphisms from $X_{e,f}$
and $Y_{e,f}$ induce a morphism from the disjoint union
\[
  X_{e,f} \: {\textstyle \coprod} \: Y_{e,f}
  \rightarrow \Gr_e(\funct m) \times \Gr_f(\funct \ell).
\]
It follows from \cite[prop.\ 4.3]{Palu} that this morphism is
surjective and that each point in the target is the image either of a
point from $X_{e,f}$ or a point from $Y_{e,f}$, but not both.  Hence
the fibres are still affine spaces $k^n$, so
\[
  \chi(\Gr_e(\funct m) \times \Gr_f(\funct \ell)) 
  = \chi(X_{e,f} \: {\textstyle \coprod} \: Y_{e,f})
  = \chi(X_{e,f}) + \chi(Y_{e,f}).
\]
Following \cite[sec.\ 5.1]{Palu} in setting
\[
  X_{e,f}^g = X_{e,f} \cap \Gr_g(\funct b), \;\;\;\;
  Y_{e,f}^g = Y_{e,f} \cap \Gr_g(\funct b')
\]
for $g \in \K_0(\flength\,\cts)$ hence gives another key formula,
\begin{equation}
\label{equ:Palu}
  \chi(\Gr_e(\funct m) \times \Gr_f(\funct \ell))
  = \sum_g \chi(X_{e,f}^g) + \chi(Y_{e,f}^g).
\end{equation}

It is clear that
\begin{equation}
\label{equ:chi}
  \chi(\Gr_g(\funct b)) = \sum_{e,f} \chi(X_{e,f}^g), \;\;\;\;
  \chi(\Gr_g(\funct b')) = \sum_{e,f} \chi(Y_{e,f}^g).
\end{equation}
The sums in equations \eqref{equ:Palu} and \eqref{equ:chi}
are also finite.  Namely, if $e$, $f$ are fixed, then each
of $X^g_{e,f}$ and $Y^g_{e,f}$ is only non-empty for finitely many
values of $g$.  And if $g$ is fixed, then each of $X^g_{e,f}$ and
$Y^g_{e,f}$ is only non-empty for finitely many values of $(e,f)$.
\end{bfhpg}

The following is a version of \cite[lem.\ 5.1]{Palu}, and a version of
the proof of that result works.  We stay within the situation of
Paragraph \ref{bfhpg:Grassmannians}.

\begin{Lemma}
\label{lem:Palu}
If $e, f, g \in \K_0(\flength\,\cts)$ are such that $X_{e,f}^g \neq
\emptyset$, then
\[
  \theta(g) - \coind \Sigma b
  = \theta(e+f) - \coind \Sigma m - \coind \Sigma \ell.
\]
\end{Lemma}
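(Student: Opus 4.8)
The plan is to use the point $E \in X_{e,f}^g$ only to extract one relation in $\K_0(\flength\,\cts)$, and then to deduce the lemma from a residual identity that no longer mentions $E$, $e$, $f$ or $g$; being a statement about the triangle $m \to b \to \ell$ alone, that identity is proved by following the index computations of \cite{Palu}.

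\textbf{Step 1: a relation in $\K_0(\flength\,\cts)$.}
Since $\funct$ is homological, applying it to $m \stackrel{i}{\rightarrow} b \stackrel{p}{\rightarrow} \ell$ gives an exact sequence $\funct m \stackrel{\funct i}{\rightarrow} \funct b \stackrel{\funct p}{\rightarrow} \funct\ell$ in $\mod\,\cts$. As $\funct m$, $\funct b$, $\funct\ell$ have finite length and $\flength\,\cts$ is closed under subobjects, quotients and extensions, every kernel, image and cokernel occurring below lies in $\flength\,\cts$. For the given $E \subseteq \funct b$ one has short exact sequences
\[
  0 \to \Ker\funct i \to (\funct i)^{-1}E \to E \cap \Image\funct i \to 0
  \qquad\text{and}\qquad
  0 \to E \cap \Image\funct i \to E \to (\funct p)E \to 0
\]
(using $\Ker\funct p = \Image\funct i$), so passing to $\K$-classes gives $e + f = g + [\Ker\funct i]$. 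Since $\theta$ is a homomorphism, the asserted equality is therefore equivalent to the $E$-free identity
\[
  \theta([\Ker\funct i]) \;=\; \coind\Sigma m + \coind\Sigma\ell - \coind\Sigma b .
\]

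\textbf{Step 2: reduction to an index identity.}
The same exact sequence $\funct m \to \funct b \to \funct\ell$ decomposes into
\[
  0 \to \Ker\funct i \to \funct m \to \Image\funct i \to 0,\quad
  0 \to \Image\funct i \to \funct b \to \Image\funct p \to 0,\quad
  0 \to \Image\funct p \to \funct\ell \to \Coker\funct p \to 0 ,
\]
so additivity of $\theta$ gives $\theta([\Ker\funct i]) = \theta([\funct m]) - \theta([\funct b]) + \theta([\funct\ell]) - \theta([\Coker\funct p])$. Inserting $\theta([\funct c]) = \coind\Sigma c - \ind\Sigma c$ turns the identity of Step 1 into the equivalent identity
\[
  \theta([\Coker\funct p]) \;=\; \ind\Sigma b - \ind\Sigma m - \ind\Sigma\ell .
\]

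\textbf{Step 3: the index identity --- the main obstacle.}
Here I would argue exactly as in \cite{Palu}. The tail of the long exact sequence for $m \to b \to \ell \to \Sigma m$ identifies $\Coker\funct p$ with $\Image(\funct h) = \Ker\bigl(\funct(\Sigma i)\colon \funct(\Sigma m) \to \funct(\Sigma b)\bigr)$, a finite-length module (it is a quotient of $\funct\ell$). Thus the identity of Step 2 is precisely the formula measuring, via $\theta$ of this module, the failure of $\ind$ to be additive along the triangle $\Sigma m \stackrel{\Sigma i}{\rightarrow} \Sigma b \stackrel{\Sigma p}{\rightarrow} \Sigma\ell$. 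This is supplied by the index and coindex estimates of \cite[lem.\ 3.1, lem.\ 3.3, prop.\ 2.2]{Palu}, which by Paragraph \ref{bfhpg:Palu} remain valid under Setup \ref{set:blanket} with the same proofs, and it is here that the $2$-Calabi--Yau hypothesis enters. I expect this to be the hard part: because $\funct$ sends a triangle to an exact but in general not short exact sequence, the naive additivity $\ind\Sigma b = \ind\Sigma m + \ind\Sigma\ell$ fails, and the identification $\Coker\funct p \cong \Ker\funct(\Sigma i)$ shows that the identity for a triangle is equivalent to the same identity for its suspension, so it cannot be obtained by a formal induction and genuinely needs the explicit $\cts$-approximation arguments of \cite{Palu}. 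Feeding the result back through Steps 2 and 1 then gives the lemma.
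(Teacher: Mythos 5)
Your proposal is correct and takes essentially the same route as the paper, which simply defers to the proof of \cite[lem.\ 5.1]{Palu}: one extracts the relation $e+f = g + [\Ker \funct i]$ in $\K_0(\flength\,\cts)$ from the four-term exact sequence $0 \to \Ker \funct i \to (\funct i)^{-1}E \to E \to (\funct p)E \to 0$ attached to a point $E \in X^g_{e,f}$, and then invokes the (co)index identities along triangles from \cite[prop.\ 2.2]{Palu}, which Paragraph \ref{bfhpg:Palu} guarantees remain valid under Setup \ref{set:blanket}. Your Steps 2--3 merely recast the residual coindex identity $\theta([\Ker \funct i]) = \coind\Sigma m + \coind\Sigma\ell - \coind\Sigma b$ into the equivalent index form via $\theta([\funct c]) = \coind\Sigma c - \ind\Sigma c$; both versions are instances of the same proposition, so no new argument is needed there.
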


\begin{bfhpg}
[The Caldero-Chapoton formula for cluster tilting subcategories]
\label{bfhpg:CC}
Let $c \in \cat$.  Using the material from the previous paragraphs, we
give the following version of the Caldero-Chapoton formula from
\cite[sec.\ 3]{CC}.
\[
  \CC^{\cts}(c) = x^{-\coind_{\cts} \Sigma c} \sum_e \chi(\Gr_e(\funct c)) x^{\theta_{\cts}(e)}
\]
The sum is over $e$ in $\K_0(\flength\,\cts)$, and the superscript of
$\CC^\cts$ will be omitted when there is no danger of confusion.

For an arbitrary $c$, it is not clear that $\CC(c)$ is well-defined.
However, if $\funct c$ has finite length in $\mod\,\cts$ then $\CC(c)$
is a well-defined element of $\BZ[x_\obj, x_\obj^{-1}]_{\obj
\in \indt}$, and hence of $\BQ(x_\obj)_{\obj \in \indt}$, because the
sum is finite.  Namely, since the functor $\funct c$ has finite
length, it is only supported at finitely many $\obj \in \indt$.  By
the deliberations of Paragraph \ref{bfhpg:Grassmannians}, this implies
that there are only finitely many classes $e$ in
$\K_0(\flength\,\cts)$ for which $\funct c$ has a submodule $M$ with
$[M] = e$.  And hence only finitely many classes $e$ in
$\K_0(\flength\,\cts)$ for which $\Gr_e(\funct c) \neq \emptyset$.
\end{bfhpg}

The following version of \cite[cor.\ 3.7]{CC} works under Setup
\ref{set:blanket}, without the assumption that $\indt$ is finite.

\begin{Proposition}
\label{pro:CC}
Suppose that $m, \ell \in \cat$ have $\funct m$, $\funct \ell$ of
finite length in $\mod\,\cts$.  Then $\CC(m)$, $\CC(\ell)$ are
elements of $\BZ[x_\obj, x_\obj^{-1}]_{\obj \in \indt}$ which satisfy
\[
  \CC(m \oplus \ell) = \CC(m)\CC(\ell).
\]
\end{Proposition}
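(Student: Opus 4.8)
The plan is to reduce everything to the split case of Paragraph~\ref{bfhpg:Grassmannians} (Case~1), applied to the split triangles $m \to m \oplus \ell \to \ell$ and $\ell \to m \oplus \ell \to m$ (so $b = b' = m \oplus \ell$), together with routine bookkeeping about coindices, the homomorphism $\theta$, and Euler characteristics of products. First I would note that $\funct$ is additive, so $\funct(m \oplus \ell) \cong \funct m \oplus \funct \ell$ has finite length in $\mod\,\cts$; by the last paragraph of~\ref{bfhpg:CC} this already shows that $\CC(m)$, $\CC(\ell)$, and $\CC(m \oplus \ell)$ are well-defined \emph{finite} sums lying in $\BZ[x_\obj, x_\obj^{-1}]_{\obj \in \indt}$, so $\CC(m)\CC(\ell)$ lies there too and the only remaining content is the displayed identity.

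Next I would record the three additivity facts that make the computation go through: (a) $\coind_\cts \Sigma(m \oplus \ell) = \coind_\cts \Sigma m + \coind_\cts \Sigma \ell$, obtained by taking the direct sum of defining triangles for $\Sigma m$ and $\Sigma \ell$ and using that the coindex does not depend on the chosen triangle (\cite[lem.\ 2.1]{Palu}, cf.\ \ref{bfhpg:Palu}); (b) $\theta$ is a group homomorphism, so $\theta(e+f) = \theta(e) + \theta(f)$, which together with the definition of the monomials $x^{\alpha}$ in~\ref{bfhpg:K}(iii) gives $x^{\theta(e)} x^{\theta(f)} = x^{\theta(e+f)}$; and (c) Euler characteristic is multiplicative on products of algebraic spaces, so $\chi(\Gr_e(\funct m) \times \Gr_f(\funct \ell)) = \chi(\Gr_e(\funct m))\,\chi(\Gr_f(\funct \ell))$.

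With these in hand I would invoke the key formula~\eqref{equ:CC} with $b = m \oplus \ell$, namely
\[
  \sum_{e+f=g} \chi\big(\Gr_e(\funct m) \times \Gr_f(\funct \ell)\big) = \chi\big(\Gr_g(\funct(m \oplus \ell))\big)
\]
for every $g \in \K_0(\flength\,\cts)$, and then multiply out $\CC(m)\CC(\ell)$ directly from the definition: pull the monomial $x^{-\coind \Sigma m}\, x^{-\coind \Sigma \ell} = x^{-\coind \Sigma(m \oplus \ell)}$ to the front, regroup the (finite) double sum over $(e,f)$ according to the value $g = e + f$, replace $x^{\theta(e)}x^{\theta(f)}$ by $x^{\theta(g)}$, and substitute the key formula to collapse the inner sum to $\chi(\Gr_g(\funct(m \oplus \ell)))$. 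What remains is exactly $\CC(m \oplus \ell)$.

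I do not expect a genuine obstacle here: this is the finite-length analogue of~\cite[cor.\ 3.7]{CC}, and all the geometric input has already been assembled in Paragraphs~\ref{bfhpg:Grassmannians}--\ref{bfhpg:CC}. The only points needing a little care are (i) justifying that the fibrewise-affine morphism $X_{e,f} \to \Gr_e(\funct m) \times \Gr_f(\funct \ell)$ preserves Euler characteristic although the fibre dimension may vary over the base, which is asserted in~\ref{bfhpg:Grassmannians} and reduces to $\chi(k^n) = 1$; and (ii) checking that the rearrangement of the double sum is legitimate, which follows from the finiteness statements at the ends of~\ref{bfhpg:Grassmannians} and~\ref{bfhpg:CC}.
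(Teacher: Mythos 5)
Your proposal is correct and follows essentially the same route as the paper: the paper also deduces well-definedness from Paragraph~\ref{bfhpg:CC} and then declares the multiplicativity an immediate consequence of the split-case key formula~\eqref{equ:CC}. Your write-up simply makes explicit the bookkeeping (additivity of the coindex, $\theta$ being a homomorphism, multiplicativity of $\chi$ on products, regrouping the finite double sum by $g=e+f$) that the paper leaves to the reader.
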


\begin{proof}
We have $\CC(m), \CC(\ell) \in \BZ[x_\obj, x_\obj^{-1}]_{\obj \in
\indt}$ by Paragraph \ref{bfhpg:CC}, and the formula is an immediate
consequence of equation \eqref{equ:CC}.
\end{proof}

The following version of \cite[thm.\ 1.4]{Palu} also works under Setup
\ref{set:blanket} alone.

\begin{Proposition}
\label{pro:Palu}
Suppose that $m, \ell \in \cat$ have $\dim \Ext^1_{\cat}(m,\ell) = 1$
and that $\funct m$, $\funct \ell$ have finite length in $\mod\,\cts$.

Let
\[
  m \rightarrow b \rightarrow \ell, \;\;\;\;
  \ell \rightarrow b' \rightarrow m
\]
be non-split distinguished triangles in $\cat$.  Then $\CC(m)$,
$\CC(\ell)$, $\CC(b)$, $\CC(b')$ are elements of $\BZ[x_\obj,
x_\obj^{-1}]_{\obj \in \indt}$ which satisfy
\[
  \CC(m)\CC(\ell) = \CC(b) + \CC(b').
\]
\end{Proposition}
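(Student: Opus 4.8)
The plan is to follow the strategy of \cite[thm.\ 1.4]{Palu}: expand both sides of the desired identity using the Caldero--Chapoton formula from Paragraph \ref{bfhpg:CC}, and show term-by-term that the contributions match, using the fibration results of Paragraph \ref{bfhpg:Grassmannians} (Case 2) and the index/coindex bookkeeping of Lemma \ref{lem:Palu} to reconcile the exponents of the variables $x_\obj$. First I would note that all four functors $\funct m$, $\funct \ell$, $\funct b$, $\funct b'$ have finite length in $\mod\,\cts$: this is explicit for $m$ and $\ell$ by hypothesis, and for $b$ and $b'$ it follows from the two non-split triangles and the fact that $\flength\,\cts$ is closed under extensions, as recorded in Paragraph \ref{bfhpg:Grassmannians}. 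Hence by Paragraph \ref{bfhpg:CC} each of $\CC(m)$, $\CC(\ell)$, $\CC(b)$, $\CC(b')$ is a well-defined element of $\BZ[x_\obj, x_\obj^{-1}]_{\obj \in \indt}$, and all four sums defining them are finite, so the manipulations below involve only finite sums.

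Next I would write out the product $\CC(m)\CC(\ell)$ as a double sum over $e, f \in \K_0(\flength\,\cts)$ of $\chi(\Gr_e(\funct m))\chi(\Gr_f(\funct \ell))\, x^{-\coind\Sigma m - \coind\Sigma \ell + \theta(e+f)}$, using the product formula for $\chi$ of a product of algebraic spaces and the additivity $\theta(e) + \theta(f) = \theta(e+f)$. For each pair $(e,f)$ the key input is equation \eqref{equ:Palu}, which rewrites $\chi(\Gr_e(\funct m)\times\Gr_f(\funct\ell))$ as $\sum_g \chi(X^g_{e,f}) + \chi(Y^g_{e,f})$. Substituting and interchanging the (finite) sums, $\CC(m)\CC(\ell)$ becomes a sum over $g$ of two blocks, one built from the $X^g_{e,f}$ and one from the $Y^g_{e,f}$. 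Lemma \ref{lem:Palu} is precisely what allows the exponent $-\coind\Sigma m - \coind\Sigma\ell + \theta(e+f)$ appearing in the $X$-block to be rewritten, whenever $X^g_{e,f}\neq\emptyset$, as $-\coind\Sigma b + \theta(g)$, which is independent of $(e,f)$; combined with equation \eqref{equ:chi}, $\sum_{e,f}\chi(X^g_{e,f}) = \chi(\Gr_g(\funct b))$, the $X$-block collapses to $\sum_g \chi(\Gr_g(\funct b))\, x^{-\coind\Sigma b + \theta(g)} = \CC(b)$. A symmetric argument, applying the analogue of Lemma \ref{lem:Palu} with the roles of $m$, $\ell$ interchanged and $b$ replaced by $b'$ (using the second triangle $\ell \to b' \to m$), identifies the $Y$-block with $\CC(b')$, giving $\CC(m)\CC(\ell) = \CC(b) + \CC(b')$.

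The main obstacle is the bookkeeping around Lemma \ref{lem:Palu}: one must be careful that the hypothesis $\dim\Ext^1_{\cat}(m,\ell)=1$ genuinely puts us in Case 2 of Paragraph \ref{bfhpg:Grassmannians} (so that \eqref{equ:Palu} applies and the fibres are affine spaces with the disjointness property from \cite[prop.\ 4.3]{Palu}), and that the coindex terms truly cancel as claimed. The potential subtlety is the asymmetry between $X$ and $Y$: Lemma \ref{lem:Palu} as stated is phrased for $X^g_{e,f}$ and the triangle $m \to b \to \ell$, so for the $Y$-block I would invoke the same lemma after swapping the two triangles in \eqref{equ:triangles}, which is legitimate because the setup of Paragraph \ref{bfhpg:Grassmannians} is symmetric in the pair of triangles. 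The remaining steps --- exactness of $\funct$ on the relevant triangles, finiteness of all index sets, additivity of $\theta$ and of $\chi$ over disjoint unions --- are routine and identical to \cite{Palu}, so no new difficulty arises from dropping the finiteness assumption on $\indt$.
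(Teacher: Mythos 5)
Your proposal is correct and follows essentially the same route as the paper: the paper's proof is exactly this computation, expanding $\CC(m)\CC(\ell)$, applying equation \eqref{equ:Palu}, then Lemma \ref{lem:Palu} together with its analogue for $Y$ and $b'$, and finally equation \eqref{equ:chi} to collapse the two blocks into $\CC(b)$ and $\CC(b')$. Your preliminary observation that $\funct b$, $\funct b'$ have finite length (so all four values lie in $\BZ[x_\obj, x_\obj^{-1}]_{\obj \in \indt}$) matches the paper's opening remark as well.
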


\begin{proof}
The objects $b, b'$ also have $\funct b$, $\funct b'$ of finite
length, so $\CC(m), \CC(\ell), \CC(b), \CC(b') \in \BZ[x_\obj,
x_\obj^{-1}]_{\obj \in \indt}$ by Paragraph
\ref{bfhpg:CC}.

To get the equation, we use the proof of \cite[thm.\ 1.4]{Palu} given
in \cite[sec.\ 5.1]{Palu} which also works in the present situation.
Only a modest computation is necessary; we show it here.
\begin{align*}
  \CC(m)\CC(\ell)
  & = x^{-\coind \Sigma m}\sum_e \chi(\Gr_e \funct m)x^{\theta e}
      \times x^{-\coind \Sigma \ell}\sum_f \chi(\Gr_f \funct \ell)x^{\theta f} \\
  & = x^{-\coind \Sigma m - \coind \Sigma \ell}
      \sum_{e,f} \chi(\Gr_e \funct m)\chi(\Gr_f \funct \ell)x^{\theta(e+f)} \\
  & = x^{-\coind \Sigma m - \coind \Sigma \ell}
      \sum_{e,f} \chi \big( \Gr_e(\funct m) \times \Gr_f(\funct \ell) \big) x^{\theta(e+f)} \\
  & \stackrel{\rm (a)}{=} x^{-\coind \Sigma m - \coind \Sigma \ell}
      \sum_{e,f,g} \big( \chi(X_{e,f}^g) + \chi(Y_{e,f}^g) \big) x^{\theta(e+f)} \\
  & \stackrel{\rm (b)}{=}
      \sum_{e,f,g} \chi(X_{e,f}^g)x^{\theta(g) - \coind \Sigma b}
      + \sum_{e,f,g} \chi(Y_{e,f}^g)x^{\theta(g) - \coind \Sigma b'} \\
  & \stackrel{\rm (c)}{=}
      x^{-\coind \Sigma b} \sum_g \chi(\Gr_g \funct b)x^{\theta g}
      + x^{-\coind \Sigma b'} \sum_g \chi(\Gr_g \funct b')x^{\theta g} \\
  & = \CC(b) + \CC(b')
\end{align*}
Here (a) is by Equation \eqref{equ:Palu}, (b) is by Lemma
\ref{lem:Palu} and its analogue for $Y$ and $b'$, and (c) is by
Equation \eqref{equ:chi}. 
\end{proof}

\section{Cluster maps}
\label{sec:Cluster_maps}

Recall that we work under Setup \ref{set:blanket}.  In this section,
we prove that the Caldero-Chapoton map associated with
a cluster tilting subcategory is a (weak) cluster map in the
sense of~\cite{BIRS}.

\begin{Setup}
\label{set:Cluster_maps}
In this section we assume that the cluster tilting subcategories which
can be reached by finitely many mutations from $\cts$ form a cluster
structure in the sense of \cite[sec.\ II.1]{BIRS}.

Note that this forces the quiver $Q_\cts$ of $\cts$ to be locally finite. 

The set of indecomposable objects belonging to cluster tilting
subcategories which can be reached from $\cts$ will be denoted by $E$,
and we will write $\cE = \add E$.
\end{Setup}

\begin{bfhpg}
[Cluster maps]
\label{bfhpg:cluster_maps}
We recall the following from \cite[sec.\ IV.1]{BIRS}.
A map
\[
  \varphi : \objects \cE \rightarrow \BQ(x_\obj)_{\obj \in \indt}
\]
is called a cluster map if it satisfies the following conditions.
\begin{enumerate}

  \item  $\varphi$ is constant on isomorphism classes.

\smallskip

  \item If $c_1, c_2 \in \cE$ then $\varphi(c_1 \oplus c_2) =
    \varphi(c_1)\varphi(c_2)$.

\smallskip

  \item If $m, \ell \in \objects \cE$ are indecomposable objects with
    $\dim_k \Ext^1_{\cat}(m,\ell) = 1$ and $b, b' \in \objects \cE$ are
    such that there are non-split distinguished triangles
\[
  m \rightarrow b \rightarrow \ell, \;\;\;\;
  \ell \rightarrow b' \rightarrow m
\]
in $\cat$, then $\varphi(m)\varphi(\ell) = \varphi(b) +
\varphi(b')$.

\smallskip

  \item There is a cluster tilting subcategory $\cts'$ which can be
  reached from $\cts$ for which $\{\, \varphi(\obj') \,|\, \obj' \in
  \ind \cts' \,\}$ is a transcendence basis of the field
  $\BQ(x_\obj)_{\obj \in \indt}$.

\end{enumerate}
\end{bfhpg}

\begin{Theorem}
\label{thm:cluster_map}
The Caldero-Chapoton formula for cluster tilting subcategories
gives a well-defined cluster map
\[
  \CC : \objects \cE \rightarrow \BQ(x_\obj)_{\obj \in \indt}
\]
satisfying $\CC(\obj) = x_\obj$ for each $\obj \in \indt$. 
\end{Theorem}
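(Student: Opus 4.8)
The plan is to verify the four axioms (i)--(iv) of Paragraph~\ref{bfhpg:cluster_maps} one at a time, drawing on the propositions of Section~\ref{sec:CC}. The first and most delicate point is that $\CC$ is actually defined on every object of $\cE$: for $c \in \cE$ we must check that $\funct c$ has finite length in $\mod\,\cts$, since that is exactly what Paragraph~\ref{bfhpg:CC} requires for $\CC(c)$ to be a well-defined element of $\BZ[x_\obj,x_\obj^{-1}]_{\obj\in\indt}$. I would argue this as follows. An indecomposable $c \in \cE$ lies in some cluster tilting subcategory $\cts'$ reachable from $\cts$ by finitely many mutations. By the theory of cluster structures (\cite[sec.\ II.1]{BIRS}), passing between $\cts$ and a one-step mutation $\cts'$ changes only one indecomposable object, so an induction on the number of mutations reduces to the case where $\cts'$ differs from $\cts$ in a single indecomposable, say $\obj^\ast$ replaces $\obj$. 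Every object of $\cts'$ is then either already in $\cts$ (so $\funct$ of it is zero, hence of finite length) or is the single new object $\obj^\ast$, and for that one the exchange triangle $\obj^\ast \rightarrow c \rightarrow \obj$ with $c \in \add(\indt\setminus\obj)$ shows $\funct \obj^\ast \cong S_\obj$, which has finite length. Applying $\funct$ to a general object of $\cts'$ and using that $\funct$ is additive and sends $\cts$ to $0$, we conclude $\funct c$ has finite length for all $c \in \cE$; by Krull--Schmidt and additivity of $\funct$ this extends from indecomposables to all of $\cE$. (One must be a little careful: after several mutations the ``new'' objects accumulate, but each remains in $\cE$ and the same exchange-triangle argument, applied inductively relative to the intermediate cluster tilting subcategory, keeps $\funct$ of finite length.)

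With well-definedness in hand, axiom~(i) is immediate since $\Gr_e(\funct c)$, $\coind\Sigma c$ and $\theta$ depend only on the isomorphism class of $c$ (via $\funct c$ and \cite[lem.\ 2.1(4)]{Palu}). Axiom~(ii) is exactly Proposition~\ref{pro:CC}: for $c_1,c_2 \in \cE$ both $\funct c_1,\funct c_2$ have finite length by the previous paragraph, so $\CC(c_1\oplus c_2)=\CC(c_1)\CC(c_2)$. Axiom~(iii) is exactly Proposition~\ref{pro:Palu}: given indecomposable $m,\ell\in\cE$ with $\dim_k\Ext^1_{\cat}(m,\ell)=1$ and non-split exchange triangles $m\to b\to\ell$, $\ell\to b'\to m$ with $b,b'\in\cE$, both $\funct m,\funct\ell$ (and hence $\funct b,\funct b'$) have finite length, so $\CC(m)\CC(\ell)=\CC(b)+\CC(b')$.

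For axiom~(iv) the plan is to take $\cts' = \cts$ itself and show that $\{\CC(\obj)\mid\obj\in\indt\}$ is a transcendence basis of $\BQ(x_\obj)_{\obj\in\indt}$; this will follow once we prove the last assertion $\CC(\obj)=x_\obj$ for each $\obj\in\indt$, since the $x_\obj$ are algebraically independent generators of the field. To compute $\CC(\obj)$ for $\obj\in\indt$, note $\funct\obj = \cat(-,\Sigma\obj)\big|_{\cts} = 0$ because $\cts$ is rigid, so the only nonzero term in $\sum_e\chi(\Gr_e(\funct\obj))x^{\theta(e)}$ is $e=0$, giving the sum equal to $1$; and $\coind_{\cts}\Sigma\obj$: applying the definition with the triangles $0\to\Sigma^{-1}\Sigma\obj=\obj \to \Sigma\obj \to 0$ rewritten appropriately, or more directly using that $\Sigma\obj$ has a trivial index/coindex computation, one gets $\coind_{\cts}\Sigma\obj = -[\obj]$ in $\K_0^{\split}(\cts)$ (the triangle $\Sigma\obj \to 0 \to \Sigma^2\obj$ shows $\coind\Sigma c$ for $c=\Sigma\obj$ reads off $[\obj^0]-[\obj^1]$ from $\Sigma\obj \to \Sigma^2\obj^0\to\Sigma^2\obj^1$, i.e.\ from $\obj\to \Sigma\obj^0\to\Sigma\obj^1$; choosing $\obj^1=\obj,\obj^0=0$ gives $\coind = -[\obj]$). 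Hence $x^{-\coind_{\cts}\Sigma\obj} = x^{[\obj]} = x_\obj$ and $\CC(\obj)=x_\obj$.

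The main obstacle is the first paragraph: establishing that $\funct c$ has finite length for every $c\in\cE$, i.e.\ controlling how the functor $\funct$ (which is adapted to the \emph{fixed} subcategory $\cts$) behaves on objects coming from \emph{other} cluster tilting subcategories reached by mutation. This is where the combinatorics of cluster structures from \cite{BIRS}, the uniqueness of mutations from \cite[thm.\ 5.3]{IyamaYoshino}, and the exchange triangles all have to be combined carefully; the other three axioms are then essentially bookkeeping on top of Propositions~\ref{pro:CC} and~\ref{pro:Palu} together with the direct computation $\CC(\obj)=x_\obj$.
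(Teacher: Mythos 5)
Your overall structure matches the paper's: reduce everything to the finite-length statement for $\funct c$, $c \in \objects\cE$, then quote Propositions~\ref{pro:CC} and \ref{pro:Palu} for axioms (ii) and (iii), and get (iv) from the direct computation $\CC(\obj)=x_\obj$ (your coindex computation $\coind_{\cts}\Sigma\obj=-[\obj]$ via the triangle $\Sigma\obj\to 0\to\Sigma^2\obj$ is correct). The problem is the finite-length step itself, which is the heart of the theorem and where your argument has a genuine gap. Your induction on the number of mutations only establishes the base case. At the inductive step, the exchange triangle $\obj^\ast\to c\to\obj$ for the $(k{+}1)$-st mutation lives relative to the intermediate subcategory $\cts^{(k)}$, so ``the same exchange-triangle argument applied relative to $\cts^{(k)}$'' identifies $\cat(-,\Sigma\obj^\ast)|_{\cts^{(k)}}$ with a simple $\cts^{(k)}$-module --- but what you need is finite length of $\cat(-,\Sigma\obj^\ast)|_{\cts}$ as a $\cts$-module, and your parenthetical does not supply the translation. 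If you try to extract it from the long exact sequence of the exchange triangle, you run into the cokernel of $\cat(-,c)|_{\cts}\to\cat(-,\obj)|_{\cts}$, about which your induction hypothesis (which only controls $\cat(-,\Sigma d)|_{\cts}$ for $d$ in reached subcategories, not $\cat(-,d)|_{\cts}$) says nothing; these functors are typically supported at infinitely many indecomposables of $\cts$. One can close this using the approximation property of $c\to\obj$ with respect to $\add(\ind\cts^{(k)}\setminus\obj)\supseteq\add(\indt\setminus\{\obj_1,\dots,\obj_{k+1}\})$, but that extra input is exactly what is missing from your writeup.

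The paper avoids all of this with a one-step argument you should adopt: if $e$ is an indecomposable of a cluster tilting subcategory $\cts'$ obtained from $\cts$ by mutating at $\obj_1,\dots,\obj_m$, then $\ind\cts'\supseteq\indt\setminus\{\obj_1,\dots,\obj_m\}$, and since $\cts'$ is rigid we get $\cat(\obj,\Sigma e)=0$ for every $\obj\in\indt\setminus\{\obj_1,\dots,\obj_m\}$. Hence $\funct e$ is supported at the finitely many remaining $\obj_i$ and, by Hom-finiteness, has finite length. No induction and no exchange triangles are needed. With that replacement, the remainder of your proof goes through as written.
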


\begin{proof}
Let us first show that $\funct(\varepsilon)$ has finite length for
$\varepsilon \in \objects\cE$.  It is enough to show that $\funct(e)$
has finite length when $e$ is an indecomposable object belonging to a
cluster tilting subcategory $\cts'$ which can be obtained from $\cts$
by successive mutations at finitely many objects $\obj_1, \ldots,
\obj_m$.  But $\ind \cts'$ is equal to $\indt \setminus \{ \obj_1,
\ldots, \obj_m \}$ united with finitely many indecomposable objects,
and it follows that $\cat(\obj,\Sigma e) = 0$ for $\obj \in \indt
\setminus \{ \obj_1, \ldots, \obj_m \}$.  Hence $\funct(e)$ is
supported on $\{ \obj_1, \ldots, \obj_m \}$ and so it has finite
length.

Let us now prove the theorem.  The equality $\CC(\obj) = x_\obj$ for
$\obj \in \indt$ is shown by direct computation.  Paragraph
\ref{bfhpg:CC} combined with finite length of $\funct(\varepsilon)$
shows that $\CC$ is well-defined on $\objects \cE$ and takes values in
$\BZ[x_\obj, x_\obj^{-1}]_{\obj \in
\indt}$ and hence in $\BQ(x_\obj)_{\obj \in \indt}$.  Property (i)
of cluster maps is clear, property (ii) holds by Proposition
\ref{pro:CC}, and property (iii) by Proposition \ref{pro:Palu}.

Property (iv) holds with $\cts' = \cts$ since $\CC(\obj) = x_\obj$.
\end{proof}

\begin{bfhpg}
[Cluster algebras]
Recall that cluster algebras were originally introduced by Fomin and
Zelevinsky in \cite{FZ}.  We will not recall their definition, but
merely remind the reader that a cluster algebra is generated by
cluster variables which form overlapping sets
of fixed cardinality called clusters.  

The clusters in \cite{FZ} were finite, but cluster algebras with
countable clusters are defined in \cite[sec.\ IV.1]{BIRS}.
\end{bfhpg}

\begin{Corollary}
Suppose that $\cts$ has countably many indecomposable objects and let
$\cA$ be the cluster algebra associated with the quiver $Q_\cts$.

The map $\CC$ induces a surjection from $E$ to the cluster variables
of $\cA$.  Under the surjection, the indecomposable objects of a
cluster tilting subcategory which can be reached from $\cts$ are sent
to a cluster.
\end{Corollary}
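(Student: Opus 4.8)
The plan is to invoke Theorem~\ref{thm:cluster_map} together with the general machinery of \cite[sec.\ IV.1]{BIRS}, which shows that a cluster map transports the combinatorics of mutation on cluster tilting subcategories to the combinatorics of mutation on clusters. Recall from Setup~\ref{set:Cluster_maps} that the cluster tilting subcategories reachable from $\cts$ form a cluster structure, so each such subcategory $\cts'$ has a well-defined quiver $Q_{\cts'}$, and mutation of subcategories corresponds to Fomin--Zelevinsky mutation of quivers; since $\cts$ has countably many indecomposable objects and $Q_\cts$ is locally finite, the cluster algebra $\cA$ associated with $Q_\cts$ is exactly the countable-cluster cluster algebra of \cite[sec.\ IV.1]{BIRS}, whose clusters are indexed by the reachable cluster tilting subcategories.

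First I would record that, by Theorem~\ref{thm:cluster_map}, $\CC$ is a cluster map with $\CC(\obj) = x_\obj$ for $\obj \in \ind\cts$, so $\CC$ sends $\ind\cts$ bijectively onto the initial cluster $\{x_\obj\}_{\obj \in \indt}$ of $\cA$. Next, using property~(iii) of cluster maps applied along the exchange triangles of the cluster structure: if $\cts'$ is reachable and $\obj \in \ind\cts'$ with mutation $\obj^\ast$ and exchange triangles $\obj^\ast \to c \to \obj$, $\obj \to c' \to \obj^\ast$ (with $c,c' \in \add(\ind\cts' \setminus \obj)$), then $\CC(\obj)\CC(\obj^\ast) = \CC(c) + \CC(c')$, and by property~(ii), $\CC(c)$ and $\CC(c')$ are the appropriate monomials in the $\CC$-images of $\ind\cts' \setminus \obj$. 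This is precisely the exchange relation defining the mutation of the cluster corresponding to $\cts'$ in the direction of $\obj$. Hence, by induction on the number of mutations needed to reach a given cluster tilting subcategory from $\cts$, the image under $\CC$ of $\ind\cts'$ is a cluster of $\cA$, and mutation of $\cts'$ is intertwined with mutation of that cluster. Since every cluster of $\cA$ is obtained from the initial one by finitely many mutations, every cluster arises this way, and every cluster variable lies in some cluster; therefore $\CC$ maps $E = \bigcup \ind\cts'$ onto the set of all cluster variables, and each $\ind\cts'$ is sent onto a cluster.

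The main obstacle is the bookkeeping of the inductive step: one must check that the exchange triangles supplied by the cluster structure on $\cat$ are matched correctly with the exchange relations of $\cA$ — in particular that the monomials $\CC(c)$, $\CC(c')$ coming from property~(ii) have exponents given by the entries of the mutated quiver $Q_{\cts'}$, so that the relation $\CC(\obj)\CC(\obj^\ast) = \CC(c)+\CC(c')$ is literally the Fomin--Zelevinsky exchange relation and not merely an equation of the same shape. This compatibility of the quiver of $\cts'$ with the $B$-matrix governing the exchange is exactly what the notion of cluster structure in \cite[sec.\ II.1]{BIRS} guarantees (the quiver mutates according to Fomin--Zelevinsky's rule), so the argument goes through; the remaining details are the routine verification, already carried out in \cite[sec.\ IV.1]{BIRS}, that a cluster map with the initial cluster in its image induces such a surjection. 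One should also note that surjectivity of $E \to \{\text{cluster variables}\}$ does not claim injectivity: distinct reachable indecomposables may have equal $\CC$-values, which is consistent with the statement.
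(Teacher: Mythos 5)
Your proposal is correct and follows essentially the same route as the paper, whose proof is simply ``Immediate from Theorem~\ref{thm:cluster_map} and \cite[thm.\ IV.1.2(a)]{BIRS}.'' The inductive matching of exchange triangles with Fomin--Zelevinsky exchange relations that you sketch is exactly the content of that cited BIRS theorem, so your extra unpacking is sound but not a different argument.
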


\begin{proof}
Immediate from Theorem \ref{thm:cluster_map} and \cite[thm.\
IV.1.2(a)]{BIRS}. 
\end{proof}

\section{Calabi--Yau reductions}
\label{sec:IY}

The technique of Calabi--Yau reductions, introduced by Iyama and
Yoshino in \cite[sec.\ 4]{IyamaYoshino}, yields fully faithful
functors between the module categories of different cluster tilting
subcategories.  In this section, we recall the construction of
Calabi-Yau reductions and give a nice description of these functors.

\begin{Setup}
In this section we let $\indu \subseteq \indt$ be a subset for which
$\ctsu = \add \indu$ is functorially finite in $\cat$.
\end{Setup}

For a subcategory $\cV$ of $\cat$, we let $^\perp \cV$ (resp.\
$\cV^\perp$) denote the full subcategory of $\cat$ whose objects are
left (resp.\ right) Hom-orthogonal to each object of $\cV$.

\begin{bfhpg}
[Calabi-Yau reductions]
\label{bfhpg:IY}
We recall the following from \cite[sec.\ 4]{IyamaYoshino}.  Set
$$
\catu =\,^\perp(\shift \ctsu)/[\ctsu]
$$
and note that $\cat_\emptyset = \cat$ and $\cat_\indt = 0$.  The
category $\catu$ is triangulated. Its suspension functor $\shiftu$ is
defined on objects as follows: Let $c \in {}^\perp(\shift \ctsu)$ and
let $c \fl \obju_c$ be a minimal left-$\ctsu$-approximation. Complete it to a
distinguished triangle in $\cat$
\begin{equation}
\label{equ:Yann_Delta}
c \fl \obju_c \fl \shiftu c \stackrel{\sigma_c}{\fl} \shift c,
\end{equation}
where $\shiftu c$ belongs to $^\perp(\shift \ctsu)$ by the Wakamatsu
lemma.
Let $c \fl d \fl e \fl \shift c$ be a distinguished triangle in $\cat$, with
$c,d,e \in$ $^\perp(\shift \ctsu)$. Since $e$ belongs to
$^\perp(\shift \ctsu)$, the composition
$e \fl \shift c \fl \shift \obju_c$ vanishes and the identity of $c$
induces a morphism of triangles
$$
\xymatrix{
c \dr \downeq & d \dr \down & e \dr \down & \shift c \downeq \\
c \dr & \obju_c \dr & \shiftu c \dr & \shift c \lefteqn{.}
}
$$
The image in $\catu$ of the complex $c \fl d \fl e \fl \shiftu c$ is a
distinguished triangle and all distinguished triangles in $\catu$ are
isomorphic to such.

For $c,d \in$ $^\perp(\shift \ctsu)$, applying the functor $\cat(d,-)$
to the triangle \eqref{equ:Yann_Delta} shows that composition by
$\sigma_c$ induces an isomorphism
\begin{equation}
\label{equ:Yann_ast}
\catu(d , \shiftu c) \stackrel{\simeq}{\gfl} \cat(d , \shift c).
\end{equation}
Moreover, 
$\cat_{\indu}$ is a $k$-linear $\Hom$-finite Krull-Schmidt
2-Calabi--Yau category.
The projection functor ${}^{\perp}(\Sigma \ctsu)
\stackrel{\pi}{\fl} \catu$ induces a bijection between the cluster
tilting subcategories of $\cat$ containing $\ctsu$ and the cluster
tilting subcategories of $\catu$, and the bijection is compatible with
mutation at indecomposable objects outside $\ctsu$.
\end{bfhpg}

\begin{Example}
For each $n$, the cluster category of type $A_n$ is a Calabi--Yau
reduction of the cluster category $\cD$ of type $A_{\infty}$. See
Lemma~\ref{lem:Ng} for more details on this situation.
\end{Example}

\begin{bfhpg}
[Modules over cluster tilting subcategories]
\label{bfhpg:cluster_tilting}
The following formula contains an equivalence of categories which
comes from the third isomorphism theorem and a fully faithful functor
which is obvious.
\[
  \frac{{}^{\perp}(\Sigma \ctsu) / [\ctsu]}{[\cts / [\ctsu]]}
  \simeq \frac{{}^{\perp}(\Sigma \ctsu)}{[\cts]}
  \hookrightarrow \frac{\cat}{[\cts]}
\]
Abbreviating $\cts / [\ctsu]$ to $\cts / \ctsu$, this gives a fully
faithful functor $\catu / [\cts / \ctsu] \hookrightarrow \cat /
[\cts]$.  By way of the functor $\funct$ of Paragraph \ref{bfhpg:funct}
and its analogue $\funct_{\indu}$ for the cluster tilting subcategory
$\cts / \ctsu$ of $\catu$, this translates into a fully faithful
functor
$$
\modu\! \stackrel{\hspace{6pt}\pi^\ast}{\hookrightarrow} \modt
$$
which sends $\catu(-,\shiftu c)|_{\cts / \ctsu}$ to $\cat(-,\shift
c)|_{\cts}$. 

The situation is depicted in the following diagram.
$$
\xymatrix{
\cts \downepi_{\pi|_{\cts}} \drm        &
^\perp(\shift \ctsu) \downepi^\pi \drm  &
\cat \ddownepi                          \\
\cts/\ctsu \drm                         &
\catu \downepi                          &
                                        \\
                                        &
\catu/[\cts/\ctsu] \drm \down_\simeq    &
\cat/[\cts] \down^\simeq                \\
                                        &
\modu \drm_-{\pi^\ast}      &
\modt
}
$$
The isomorphism \eqref{equ:Yann_ast} of Paragraph \ref{bfhpg:IY} shows
that the functor $\pi^\ast$ is indeed e\-qui\-va\-lent to the one
induced by composition with $\pi$: For $c \in {}^\perp(\shift \ctsu)$
the $\cts$-modules $\catu(-,\shiftu c)|_{\cts / \ctsu} \circ
\pi|_{\cts}$ and $\cat(-,\shift c)|_{\cts}$ are isomorphic.
\end{bfhpg}

\begin{Proposition}
\label{pro:pi}
\begin{enumerate}

  \item  The fully faithful functor $\pi^{\ast}$ is exact.

\smallskip

  \item  The essential image of $\pi^{\ast}$ is the $\cts$-modules
    vanishing on $\ctsu$, so $\pi^{\ast}$ identifies finitely
    presented $\cts/\ctsu$-modules with finitely presented
    $\cts$-modules vanishing on $\ctsu$.

\smallskip

  \item  Given $M \in \mod \cts / \ctsu$, the functor $\pi^{\ast}$
    induces a bijection between isomorphism classes of submodules of
    $M$ in $\mod \cts / \ctsu$ and isomorphism classes of submodules
    of $\pi^{\ast}M$ in $\mod \cts$.

\end{enumerate}
\end{Proposition}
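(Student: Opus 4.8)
The plan is to work through the three parts in order, exploiting the explicit description of $\pi^{\ast}$ from Paragraph \ref{bfhpg:cluster_tilting}, namely that $\pi^{\ast}$ is induced by precomposition with the projection functor $\pi|_{\cts} : \cts \to \cts/\ctsu$, equivalently that $\pi^{\ast}$ sends $\catu(-,\shiftu c)|_{\cts/\ctsu}$ to $\cat(-,\shift c)|_{\cts}$. For part (i), exactness of $\pi^{\ast}$: the functor is defined on $\modu$, which I identify with a subquotient of $\cat$ via $\funct_{\indu}$, and a short exact sequence of finitely presented $\cts/\ctsu$-modules corresponds to a triangle in $\catu$ with all three terms in $^\perp(\shift\ctsu)$; applying $\pi^{\ast}$ gives the image of that triangle in $\cat/[\cts]$, hence a short exact sequence in $\modt$ by the equivalence $\cat/[\cts]\simeq\modt$ of Paragraph \ref{bfhpg:funct}. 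Alternatively, and more concretely, since $\pi^{\ast}M = M\circ\pi|_{\cts}$ and $\pi|_{\cts}$ is an additive functor that is the identity on Hom-spaces between objects of $\ctsu$-complements (it only kills maps factoring through $\ctsu$), precomposition is exact because exactness in functor categories is computed pointwise and $\pi|_{\cts}$ is essentially surjective onto $\cts/\ctsu$; I would verify that a sequence of $\cts$-modules in the image is exact iff it is exact after evaluation at each $\obj\in\indt$, and use that evaluation at $\obj\in\indu$ gives zero while evaluation at $\obj\in\indt\setminus\indu$ matches evaluation of the original $\cts/\ctsu$-module.

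For part (ii), the description of the essential image: a module of the form $\pi^{\ast}M = M\circ\pi|_{\cts}$ evaluated at an object $\obju\in\ctsu$ gives $M(\pi(\obju)) = M(0) = 0$, so every module in the image vanishes on $\ctsu$. Conversely, if $N\in\modt$ vanishes on $\ctsu$, then $N$ kills every morphism factoring through $\ctsu$ (such a morphism factors as $t\to\obju\to t'$, and $N$ applied to it factors through $N(\obju)=0$), so $N$ descends to a well-defined functor on the quotient category $\cts/\ctsu$; this descended functor is finitely presented because $N$ is, using that the projection $\cts\to\cts/\ctsu$ sends a projective presentation of $N$ to a projective presentation of the descended module (representables go to representables, since $(\cts/\ctsu)(-,t) = \cts(-,t)$ modulo maps through $\ctsu$, which is precisely $N$-relevant data). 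So $N\cong\pi^{\ast}(\bar N)$, and combined with full faithfulness from Paragraph \ref{bfhpg:cluster_tilting} this gives the identification of finitely presented $\cts/\ctsu$-modules with finitely presented $\cts$-modules vanishing on $\ctsu$.

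For part (iii), the submodule bijection: given $M\in\mod\cts/\ctsu$ and a submodule $M'\subseteq M$, exactness of $\pi^{\ast}$ (part (i)) gives $\pi^{\ast}M'\subseteq\pi^{\ast}M$, and full faithfulness makes this assignment injective on isomorphism classes. For surjectivity, let $N'\subseteq\pi^{\ast}M$ be a submodule in $\mod\cts$; since $\pi^{\ast}M$ vanishes on $\ctsu$, so does its submodule $N'$, hence by part (ii) we have $N'\cong\pi^{\ast}(\bar{N}')$ for some $\bar{N}'\in\mod\cts/\ctsu$, and the inclusion $N'\hookrightarrow\pi^{\ast}M$ is $\pi^{\ast}$ of a monomorphism $\bar{N}'\hookrightarrow M$ by full faithfulness together with the fact that $\pi^{\ast}$ reflects monomorphisms (it is exact and faithful). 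I expect the main obstacle to be the bookkeeping in part (ii): one must be careful that "vanishing on $\ctsu$" for a \emph{finitely presented} $\cts$-module genuinely yields a \emph{finitely presented} $\cts/\ctsu$-module, i.e. that the descent is compatible with the finiteness condition — this requires checking that applying the quotient functor to a presentation $\cts(-,t_1)\to\cts(-,t_0)\to N\to 0$ of an $N$ vanishing on $\ctsu$ produces a genuine presentation of the descended functor by representables of $\cts/\ctsu$, which is where the precise form of morphisms in the quotient category must be used.
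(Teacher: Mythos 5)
Your proposal is correct, but parts of it take a different route from the paper, which is worth comparing. For (ii), the paper's proof is a one-liner: every object of $\mod\,\cts$ is of the form $\funct c = \cat(-,\Sigma c)|_{\cts}$ via the equivalence $\cat/[\cts] \simeq \mod\,\cts$, and by the $2$-Calabi--Yau property $\funct c$ vanishes on $\ctsu$ if and only if $c \in {}^{\perp}(\Sigma\ctsu)$; since by construction the essential image of $\pi^{\ast}$ consists exactly of the $\funct c$ with $c \in {}^{\perp}(\Sigma\ctsu)$, the claim follows. You instead descend a module $N$ vanishing on $\ctsu$, together with a presentation $\cts(-,t_1)\rightarrow\cts(-,t_0)\rightarrow N\rightarrow 0$, through the additive quotient $\cts\rightarrow\cts/\ctsu$; this works (the induced sequence of $\cts/\ctsu$-representables is indeed exact, by exactly the bookkeeping you flag), and it is more elementary and more general in that it never uses the Calabi--Yau duality, whereas the paper's argument is shorter and yields the intrinsic description of the image in terms of ${}^{\perp}(\Sigma\ctsu)$, which is what gets reused elsewhere. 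For (i), your first alternative is the paper's argument (exact sequences in $\mod\,\cts/\ctsu$ come from triangles in $\catu$ liftable to $\cat$, and applying $\funct$ preserves exactness -- note this is by the half-exactness statement of \cite[lem.\ 3.1]{Palu}, not by the equivalence $\cat/[\cts]\simeq\mod\,\cts$ itself); your second, pointwise argument via precomposition with $\pi|_{\cts}$ is a genuine simplification, since it avoids having to realize every exact sequence as the image of a triangle and only needs exactness of the inclusions $\mod \hookrightarrow \Mod$, which the paper already cites (the appeal to essential surjectivity of $\pi|_{\cts}$ is not needed there). For (iii), what you write is precisely the content left implicit by the paper's one-line proof: submodules of $\pi^{\ast}M$ vanish on $\ctsu$, hence lie in the essential image by (ii), and full faithfulness plus exactness (reflection of monomorphisms) give the bijection.
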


\begin{proof}
(i) An exact sequence in $\mod \cts/\ctsu$ has the form
\begin{equation}
\label{equ:exact_sequence}
  \catu(-,\shiftu c)|_{\cts / \ctsu}
  \rightarrow \catu(-,\shiftu d)|_{\cts / \ctsu}
  \rightarrow \catu(-,\shiftu e)|_{\cts / \ctsu}
\end{equation}
for a distinguished triangle $c \rightarrow d \rightarrow e$ in
$\catu$ (see~\cite[lem. 3.1]{Palu}) which may be taken to come from a distinguished triangle in
$\cat$ with entries in ${}^{\perp}(\Sigma \ctsu)$.  But $\pi^{\ast}$
sends \eqref{equ:exact_sequence} to $\cat(-,\shift c)|_{\cts}
\rightarrow \cat(-,\shift d)|_{\cts} \rightarrow \cat(-,\shift
e)|_{\cts}$ which is also exact.

(ii) Let $c \in \cat$.  By the $2$-Calabi--Yau property, $\cat(-,\shift c)|_{\cts}$
vanishes on $\ctsu$ if and only if $c$ belongs to $^\perp(\shift
\ctsu)$.

(iii)  Since $\pi^\ast M$ is a $\cts$-module vanishing on $\ctsu$,
all its sub-$\cts$-modules also vanish on $\ctsu$.
\end{proof}

\section{Calabi--Yau reductions and the Caldero-Chapoton map}
\label{sec:CYclmaps}

This section shows an aspect of the interaction between Calabi-Yau
reductions and the Caldero-Chapoton map.

\begin{Lemma}\label{lem: functorially finite}
Let $R \subseteq \indt$ be such that $\indt \setminus R$ is finite.
Then $\cR = \add R$ is functorially finite in $\cat$.
\end{Lemma}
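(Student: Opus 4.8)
The plan is to reduce functorial finiteness of $\cR = \add R$ to the already-known functorial finiteness of $\cat$ and of the "finite defect" subcategory. Since $\indt \setminus R$ is finite, I would write $\cts = \add(R \cup \{r_1,\dots,r_n\})$ for the finitely many indecomposables $r_1,\dots,r_n$ not in $R$, and work relative to the cluster tilting subcategory $\cts$, which is functorially finite by hypothesis (it is a cluster tilting subcategory, cf.\ Setup~\ref{set:blanket}).

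First I would establish that $\cR$ is contravariantly finite. Given $c \in \cat$, pick a right $\cts$-approximation $t \fl c$; this exists because $\cts$ is functorially finite. Now $t$ decomposes as $t = r \oplus s$ with $r \in \cR$ and $s \in \add\{r_1,\dots,r_n\}$. The composite $r \fl t \fl c$ is a morphism from $\cR$ to $c$, but it need not yet be a right $\cR$-approximation, since morphisms from $\cR$ to $c$ might factor through the $s$-part. To fix this, observe that $\Hom_\cat(\cR, s)$ is concerned only with the finitely many objects $r_1,\dots,r_n$; more precisely, using $\Hom$-finiteness, $\bigoplus_i \cat(r, r_i)$ assembled appropriately lets one enlarge $r$ by finitely many copies of objects of $\cR$ (the sources of a generating set of $\mathrm{rad}$-morphisms into each $r_i$, in the spirit of the right-almost-split construction earlier in the paper) so that every morphism $\cR \fl c$ factors through the enlarged object of $\cR$. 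Concretely: any $\rho : r' \fl c$ with $r' \in \cR$ lifts through $t \fl c$ to $r' \fl t = r \oplus s$; the $s$-component $r' \fl s$ is a morphism from $\cR$ into $\add\{r_1,\dots,r_n\}$, and since there are only finitely many $r_i$ and $\cat$ is $\Hom$-finite, these components are all captured by finitely many morphisms $r_i'' \fl r_i$ with $r_i'' \in \cR$; adjoining the $r_i''$ to $r$ produces an object $\widetilde r \in \cR$ together with $\widetilde r \fl c$ through which every $\cR \fl c$ factors. Hence $\cR$ is contravariantly finite.

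By the $2$-Calabi--Yau duality $\cat(x,y) \cong D\cat(y,\shift^2 x)$, left $\cR$-approximations of $c$ correspond to right $\cR$-approximations of $\shift^2 c$ (note $\shift^2 \cR = \cR$ since $\cR$, being the additive closure of a subset of a cluster tilting subcategory, is closed under the Serre functor up to the fact that we only need $\shift^2$ to permute $\indt$, which it does not in general — so instead I would run the mirror-image argument directly, using covariantly-finiteness of $\cts$ and $\Hom$-finiteness to absorb the finitely many $r_i$-components on the other side). Either way, the same finiteness bookkeeping gives covariant finiteness of $\cR$, and therefore $\cR$ is functorially finite.

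The main obstacle I anticipate is the bookkeeping in the "absorb the finitely many extra indecomposables" step: one must be careful that enlarging $r$ by objects of $\cR$ to kill the $s$-components does not reintroduce new $s$-components, but this terminates because $\cat(\widetilde r, r_i)$ is finite dimensional, so a single enlargement (taking generators of each $\cat(r,r_i)$ as a space, pulled back along sources in $\cR$) suffices. No convergence or infinite-iteration issue arises precisely because $\indt \setminus R$ is \emph{finite} and $\cat$ is $\Hom$-finite — that is exactly where the hypothesis is used.
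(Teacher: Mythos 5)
Your opening reduction is sound and matches the paper's first step: since $\cts$ is contravariantly finite, it suffices to produce right $\cR$-approximations of the finitely many indecomposables $r_1,\dots,r_n$ in $\indt\setminus R$, and then to splice these into a right $\cts$-approximation $r\oplus s \fl c$. But the step where you claim the $s$-components are ``captured by finitely many morphisms $r_i''\fl r_i$ with $r_i''\in\cR$'' because there are only finitely many $r_i$ and $\cat$ is Hom-finite is exactly the assertion that each $r_i$ admits a right $\cR$-approximation --- that is the whole nontrivial content of the lemma, and it does not follow from Hom-finiteness. Hom-finiteness bounds $\dim_k\cat(x,r_i)$ for each \emph{fixed} $x$, but $\cR$ has infinitely many indecomposables and, since $\cts$ is not assumed locally bounded at this point (the fountain cluster tilting subcategories of $\cD$ in Section 6 are not), infinitely many of them may map nontrivially to $r_i$; a functor on $\cR$ with finite-dimensional values need not be finitely generated, so no single finite enlargement of $r$ is guaranteed to absorb the $s$-components of morphisms from arbitrary $r'\in\cR$. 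Your appeal to the right-almost-split construction of Paragraph 1.3 does not close this either: that construction removes one indecomposable and rests on the Iyama--Yoshino mutation triangle, not on Hom-finiteness bookkeeping.

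What the paper actually does is an iterated sink-map argument. By propositions 2.10 and 2.11 of Iyama--Yoshino, every object of $\cts$ has a sink map; for $t\in\indt\setminus R$ write a sink map $b_0\oplus c_0\fl t$ with $b_0\in\add(\indt\setminus R)$ and $c_0\in\cR$, then take sink maps $b_j\oplus c_j\fl b_{j-1}$ inductively. Because $\indt\setminus R$ is finite and $\cat$ is Hom-finite, the composites $b_m\fl\cdots\fl b_0\fl t$ are compositions of radical morphisms among finitely many fixed indecomposables and hence vanish for some $m$; one then checks that $\bigoplus_{j=0}^m c_j\fl t$ is a right $\cR$-approximation. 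So the finiteness of $\indt\setminus R$ and Hom-finiteness enter as a termination argument for factorizations through $\add(\indt\setminus R)$, not as a way to list generators of $\cat(-,r_i)|_{\cR}$. (Your treatment of covariant finiteness is also wobbly --- as you note, $\Sigma^2\cR\neq\cR$ in general --- but the paper's ``by duality'', i.e.\ running the same argument in the opposite category, handles that; the essential gap is the missing approximation of the $r_i$.)
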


\begin{Remark*}
The proof we will give of the lemma works under weaker assumptions
than the standing Setup \ref{set:blanket}.  Namely, it is only needed
that $\cat$ has a Serre functor (not that it is $2$-Calabi--Yau) and
that $\cts$ is $n$-cluster tilting for some $n$ (not that it is
cluster tilting).
\end{Remark*}

\begin{proof}
We only prove that $\cR$ is contravariantly finite.  Covariant
finiteness then follows by duality.  Since $\cts$ is contravariantly
finite in $\cat$, it is enough to prove that each $t \in \indt
\setminus R$ has a right-$\cR$-approximation. By propositions 2.10 and 2.11
of~\cite{IyamaYoshino}, any object in $\cts$ has a sink map. Let
$b_0\oplus c_0 \stackrel{a_0}{\gfl} t$ be a sink map in $\cts$ with
$b_0 \in \add(\indt \setminus R)$ and $c_0 \in \add R$.  If $b_0$ is
zero, then $a_0$ is the approximation we are looking for.  Otherwise, let
$b_1\oplus c_1 \stackrel{a_1}{\gfl} b_0$ be a sink map in $\cts$, with
the same conventions as above.  We then inductively construct sink
maps $b_j\oplus c_j \stackrel{a_j}{\gfl} b_{j-1}$, with the convention
that $a_j$ is zero if $b_{j-1}$ is zero.  Since $\cat$ is Hom-finite,
there is some $m$ for which the induced composition $b_m \fl \cdots
\fl b_1 \fl b_0 \fl t$ vanishes. One easily checks that
$\bigoplus_{j=0}^m c_j \fl t$ is a right-$\cR$-approximation.
\end{proof}

\begin{Lemma}\label{lem: reduction cluster str}
Assume that the cluster tilting
subcategories which can be reached from $\cts$ form a cluster
structure in $\cat$, in the sense of \cite[sec.\ II.1]{BIRS}.
Then, for any subset $\indu \subseteq \indt$
such that $\cU = \add \indu$ is functorially finite,
the cluster tilting subcategories which can be reached from
$\cts/\ctsu$ form a cluster structure in the Calabi--Yau reduction
$\catu$.
\end{Lemma}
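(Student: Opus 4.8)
The plan is to carry the cluster structure on $\cat$ across the Iyama--Yoshino correspondence of Paragraph~\ref{bfhpg:IY}, which presents $\catu$ as a $k$-linear $\Hom$-finite Krull--Schmidt $2$-Calabi--Yau category in which $\cts/\ctsu$ is cluster tilting.  First I would pin down which cluster tilting subcategories of $\catu$ are at stake.  Since $\ctsu \subseteq \cts$, and mutation at an indecomposable object outside $\ctsu$ preserves the property of containing $\ctsu$, every cluster tilting subcategory of $\cat$ reachable from $\cts$ by a finite sequence of such mutations contains $\ctsu$; being rigid it then lies in ${}^{\perp}(\shift\ctsu)$, and being reachable from $\cts$ it belongs to the given cluster structure on $\cat$.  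By Paragraph~\ref{bfhpg:IY} the projection $\pi$ restricts to a bijection between the cluster tilting subcategories of $\cat$ containing $\ctsu$ and those of $\catu$, compatible with mutation at indecomposables outside $\ctsu$; so an induction on the length of the mutation sequence shows that the cluster tilting subcategories of $\catu$ reachable from $\cts/\ctsu$ are precisely the $\pi(\cts')$ for $\cts'$ of the kind just described.  It follows at once that this family in $\catu$ is closed under mutation and that the mutation of any member at any indecomposable object is unique, since the corresponding facts hold for the family in $\cat$.

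Next I would transport the exchange triangles.  Fix such a $\cts'$ and an indecomposable $\obj \in \ind\cts'$ not lying in $\ctsu$, let $\obj^\ast \neq \obj$ be the indecomposable for which $\mu_\obj\cts' = \add((\ind\cts' \setminus \obj) \cup \obj^\ast)$ is cluster tilting, and let $\obj^\ast \fl c \fl \obj \fl \shift\obj^\ast$ and $\obj \fl c' \fl \obj^\ast \fl \shift\obj$ be the two exchange triangles of the cluster structure on $\cat$, so $c, c' \in \add(\ind\cts' \setminus \obj)$.  All of $\obj, \obj^\ast, c, c'$ lie in cluster tilting subcategories ($\cts'$ or $\mu_\obj\cts'$) that contain $\ctsu$, hence in ${}^{\perp}(\shift\ctsu)$, so by the description of distinguished triangles in $\catu$ recalled in Paragraph~\ref{bfhpg:IY} their images fit into distinguished triangles $\pi(\obj^\ast) \fl \pi(c) \fl \pi(\obj) \fl \shiftu\pi(\obj^\ast)$ and $\pi(\obj) \fl \pi(c') \fl \pi(\obj^\ast) \fl \shiftu\pi(\obj)$ in $\catu$.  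Because $\pi$ identifies the indecomposables of $\cts'$ outside $\ctsu$ with those of $\pi(\cts')$ and annihilates precisely the summands lying in $\ctsu$, we get $\pi(c), \pi(c') \in \add(\ind\pi(\cts') \setminus \pi(\obj))$; after deleting a split direct summand of the morphism if necessary, these are the exchange triangles for $\pi(\obj)$ in $\pi(\cts')$, with middle terms of the required form, so $\pi(\obj^\ast)$ is its complement.  (That the resulting triangles are the exchange ones is because a distinguished triangle connecting an indecomposable of a cluster tilting subcategory with its complement, and having middle term in the rest of the subcategory, must be the exchange triangle up to a split summand.)

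Finally I would treat the conditions on the Gabriel quivers, which is where the only genuine point lies.  The key observation is that $Q_{\pi(\cts')}$ is the full subquiver of $Q_{\cts'}$ on the vertices not in $\ctsu$: the vertices in $\ctsu$ are simply deleted, and for indecomposables $\obj_1, \obj_2 \in \ind\cts'$ outside $\ctsu$ every morphism $\obj_1 \fl \obj_2$ factoring through an object of $\ctsu$ already lies in $\rad^2_{\cts'}(\obj_1,\obj_2)$ --- both legs of such a factorization are radical, since the intermediate indecomposable summands differ from $\obj_1$ and from $\obj_2$ --- so forming $\catu$ by dividing out the ideal $[\ctsu]$ changes none of the spaces $\rad_{\cts'}/\rad^2_{\cts'}$ that count the arrows among the surviving vertices.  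Granting this, absence of loops and of $2$-cycles passes from $Q_{\cts'}$ to $Q_{\pi(\cts')}$.  For compatibility with Fomin--Zelevinsky mutation, the mutation compatibility of Paragraph~\ref{bfhpg:IY} gives $Q_{\mu_{\pi(\obj)}\pi(\cts')} = Q_{\pi(\mu_\obj\cts')}$, which by the observation (applied to $\mu_\obj\cts'$) together with the cluster structure axiom on $\cat$ equals the full subquiver of $\mu_\obj Q_{\cts'}$ on the vertices not in $\ctsu$; on the other hand $\mu_{\pi(\obj)}Q_{\pi(\cts')}$ is, again by the observation, $\mu_\obj$ applied to the full subquiver of $Q_{\cts'}$ on those same vertices.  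So the statement reduces to the purely combinatorial fact that, because the mutation vertex $\obj$ is itself retained, Fomin--Zelevinsky mutation at $\obj$ commutes with passing to the full subquiver on a vertex set containing $\obj$, including the $2$-cycle cancellation step.  I expect this last quiver bookkeeping to be the only part needing real care; everything else is a routine transport of data along $\pi$.
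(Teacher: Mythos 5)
Your proposal is correct, but it takes a genuinely different route from the paper. The paper's proof is a two-step reduction: it invokes \cite[thm.\ II.1.6]{BIRS}, which says that for the (weak) cluster structure coming from Iyama--Yoshino mutation in a $2$-Calabi--Yau category it suffices to verify the absence of loops and $2$-cycles --- the Fomin--Zelevinsky compatibility then comes for free --- and it checks this single condition by observing that the exchange triangles in $\catu$ are the images of the exchange triangles in $\cat$. You instead verify every axiom of a cluster structure by hand: unique exchange and exchange triangles transported along the Iyama--Yoshino bijection, and then the quiver conditions via your key observation that $Q_{\pi(\cts')}$ is the full subquiver of $Q_{\cts'}$ on the vertices outside $\ctsu$ (because $[\ctsu](\obj_1,\obj_2)\subseteq\rad^2_{\cts'}(\obj_1,\obj_2)$ for $\obj_1,\obj_2\notin\ctsu$), combined with the combinatorial fact that Fomin--Zelevinsky mutation at a retained vertex commutes with passing to a full subquiver containing that vertex. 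Both of these points check out. What the paper's route buys is brevity and a clean delegation of the hardest axiom to \cite{BIRS}; what yours buys is self-containedness (modulo the Iyama--Yoshino correspondence) and the more explicit, independently useful statement identifying $Q_{\pi(\cts')}$ inside $Q_{\cts'}$. One small remark: the caveat ``after deleting a split direct summand if necessary'' in your exchange-triangle step is not needed, since $\dim\Ext^1_{\catu}(\pi(\obj),\pi(\obj^\ast))=1$ by the isomorphism \eqref{equ:Yann_ast} and the image triangle is non-split, so it already \emph{is} the exchange triangle up to isomorphism; this only streamlines, and does not affect, your argument.
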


\begin{proof}
 By~\cite[thm. II.1.6]{BIRS}, it is enough to check that
the cluster tilting subcategories reachable from $\cts/\ctsu$ in
$\catu$ have no loops or 2-cycles. Moreover, since the Calabi--Yau
reduction is compatible with mutation, it is enough to check
the following: If $\cts$ has no loops or 2-cycles in $\cat$, then
$\cts/\ctsu$ has no loops or 2-cycles in $\catu$. This follows easily
from the fact that the exchange triangles in $\catu$ are the images
of the exchange triangles in $\cat$.
\end{proof}

\begin{Setup}
\label{set:CYred}
In the rest of this section,
we let $\indu \subseteq \indt$ be a subset for which
$\ctsu = \add \indu$ is functorially finite in $\cat$,
and we assume that the cluster tilting subcategories
which can be reached from $\cts/\ctsu$ form a cluster structure
in $\catu$.

Note that hence, the results of Sections \ref{sec:CC} and
\ref{sec:IY} apply in $\cat$ and the results of
Section \ref{sec:Cluster_maps} apply in $\catu$.
\end{Setup}

\begin{Notation}
The set of indecomposable objects belonging to cluster tilting
subcategories of $\catu$ which can be reached from $\cts / \ctsu$ will
be denoted by $F$, and we will write $\cF = \add F$.
\end{Notation}

\begin{Remark}\label{rk: identification}
 Objects in $\catu$ are identified with the objects
of ${}^{\perp}(\Sigma
\ctsu) \subseteq \cat$
without direct summands in $\ctsu$.
Under this identification, the objects in $\cat$
corresponding to objects in $\cF$
are sent to finite length modules by the functor $\funct$;
indeed, they are sent to finite length $\cts/\ctsu$-modules
by the proof of Theorem \ref{thm:cluster_map}.
The following definition
therefore makes sense:
\end{Remark}

\begin{Definition}
Let the map
\[
  \overline{\CC}^{\cts} : \objects\,\cF
  \rightarrow \BQ(x_\obj)|_{\obj \in \indt \setminus \indu}
\]
be given by $\overline{\CC}^{\cts}(f) = \CC^{\cts}(f)|_{x_\obju=1
\:\mbox{\tiny for}\: \obju \in \indu}$. 
\end{Definition}

\begin{Proposition}
\label{pro:reduction}
The map $\overline{\CC}^{\cts} : \objects\,\cF \rightarrow
\BQ(x_\obj)|_{\obj \in \indt \setminus \indu}$ is a cluster map.
\end{Proposition}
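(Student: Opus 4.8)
The plan is to check the four defining conditions of a cluster map from Paragraph~\ref{bfhpg:cluster_maps}, read in the Calabi--Yau reduction $\catu$ with its cluster tilting subcategory $\cts/\ctsu$ and the field $\BQ(x_\obj)_{\obj\in\indt\setminus\indu}$. Throughout I use the identification of Remark~\ref{rk: identification}, under which an object of $\catu$ is an object of ${}^{\perp}(\shift\ctsu)\subseteq\cat$ with no direct summand in $\ctsu$, and isomorphism classes and direct sums in $\catu$ agree with those computed in $\cat$. Well-definedness comes first: for $f\in\objects\,\cF$, Remark~\ref{rk: identification} (which rests on Proposition~\ref{pro:pi}) shows that $\funct f$ has finite length in $\mod\cts$, so by Paragraph~\ref{bfhpg:CC} the element $\CC^{\cts}(f)$ is a well-defined Laurent polynomial in $\BZ[x_\obj,x_\obj^{-1}]_{\obj\in\indt}$; since setting $x_\obju=1$ for $\obju\in\indu$ defines a ring homomorphism onto $\BZ[x_\obj,x_\obj^{-1}]_{\obj\in\indt\setminus\indu}$, the value $\overline{\CC}^{\cts}(f)$ lies in that ring, hence in $\BQ(x_\obj)_{\obj\in\indt\setminus\indu}$.

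Conditions (i), (ii) and (iv) follow quickly. For (i), $\CC^{\cts}$ is constant on isomorphism classes in $\cat$ (it depends only on $\funct c$ and on $\coind_{\cts}\shift c$, the latter being determined by $\funct c$), and specialization preserves this. For (ii), a direct sum in $\catu$ is lifted by a direct sum in $\cat$ of objects with $\funct$ of finite length, so Proposition~\ref{pro:CC} gives $\CC^{\cts}(c_1\oplus c_2)=\CC^{\cts}(c_1)\CC^{\cts}(c_2)$, and specializing $x_\obju=1$ yields (ii). For (iv), take $\cts'=\cts/\ctsu$, which is reachable from itself; its indecomposables are the $\obj\in\indt\setminus\indu$, and for such $\obj$ rigidity of $\cts$ gives $\funct\obj=0$, so $\CC^{\cts}(\obj)=x_\obj$ (this equality is established under Setup~\ref{set:blanket} alone, by the direct computation in the proof of Theorem~\ref{thm:cluster_map}). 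Hence $\overline{\CC}^{\cts}(\obj)=x_\obj$, and $\{\,x_\obj\mid\obj\in\indt\setminus\indu\,\}$ is a transcendence basis of $\BQ(x_\obj)_{\obj\in\indt\setminus\indu}$.

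The substantive step is (iii), and the bookkeeping between exchange triangles in $\catu$ and distinguished triangles in $\cat$ is the main obstacle. Let $m,\ell\in\objects\,\cF$ be indecomposable with $\dim_k\Ext^1_{\catu}(m,\ell)=1$, and let $m\to b\to\ell$, $\ell\to b'\to m$ be non-split distinguished triangles in $\catu$ with $b,b'\in\objects\,\cF$. Applying the isomorphism~\eqref{equ:Yann_ast} with $d=m$, $c=\ell$ gives $\Ext^1_{\cat}(m,\ell)=\cat(m,\shift\ell)\cong\catu(m,\shiftu\ell)=\Ext^1_{\catu}(m,\ell)$, so $\dim_k\Ext^1_{\cat}(m,\ell)=1$. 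By Paragraph~\ref{bfhpg:IY} the two triangles in $\catu$ are, up to isomorphism, the images of distinguished triangles $m\to B\to\ell$ and $\ell\to B'\to m$ in $\cat$ with entries in ${}^{\perp}(\shift\ctsu)$; since the projection to $\catu$ sends split triangles to split triangles, these lifts are non-split (a split lift would have split image), and since that projection kills precisely the $\ctsu$-summands we get $B\cong b\oplus v$ and $B'\cong b'\oplus v'$ with $v,v'\in\ctsu$. As $\funct m$ and $\funct\ell$ have finite length, Proposition~\ref{pro:Palu} applies to $m,\ell$ and the triangles $m\to B\to\ell$, $\ell\to B'\to m$ in $\cat$ and gives
\[
  \CC^{\cts}(m)\,\CC^{\cts}(\ell)=\CC^{\cts}(B)+\CC^{\cts}(B').
\]

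It remains to specialize $x_\obju\mapsto1$. Because $v\in\ctsu\subseteq\cts$, combining $\CC^{\cts}(\obju)=x_\obju$ for $\obju\in\indu$ with Proposition~\ref{pro:CC} shows that $\CC^{\cts}(v)$ is a monomial in the variables $\{x_\obju\}_{\obju\in\indu}$, hence specializes to $1$; likewise for $v'$. So, using Proposition~\ref{pro:CC} once more, $\CC^{\cts}(B)|_{x_\obju=1}=\CC^{\cts}(b)|_{x_\obju=1}=\overline{\CC}^{\cts}(b)$ and similarly $\CC^{\cts}(B')|_{x_\obju=1}=\overline{\CC}^{\cts}(b')$. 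Specializing the displayed identity therefore yields
\[
  \overline{\CC}^{\cts}(m)\,\overline{\CC}^{\cts}(\ell)=\overline{\CC}^{\cts}(b)+\overline{\CC}^{\cts}(b'),
\]
which is condition (iii). With all four conditions verified, $\overline{\CC}^{\cts}$ is a cluster map.
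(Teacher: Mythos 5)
Your proof is correct and follows essentially the same route as the paper's: transporting the $\Ext^1$ computation via the isomorphism \eqref{equ:Yann_ast}, lifting the exchange triangles to $\cat$ with middle terms differing by summands in $\ctsu$, applying Propositions \ref{pro:CC} and \ref{pro:Palu}, and specializing $x_{\obju}=1$. Your added remarks (that the lifted triangles are non-split because $\pi$ preserves split triangles, and that $\CC^{\cts}$ of a $\ctsu$-summand is a monomial in the $x_{\obju}$) only make explicit what the paper leaves implicit.
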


\begin{proof}
We must prove that the map $\overline{\CC}^{\cts}$
satisfies the conditions of Paragraph
\ref{bfhpg:cluster_maps}.  Conditions (i) and (ii)
follow immediately from the corresponding properties of $\CC^{\cts}$.

To see (iii), let $m, \ell \in \cF$ be indecomposable objects with
$\dim_k \Ext_{\catu}^1(m,\ell) = 1$ and suppose that there are $b, b'
\in \cF$ and non-split distinguished triangles
\[
  m \rightarrow b \rightarrow \ell, \;\;\;\;
  \ell \rightarrow b' \rightarrow m
\]
in $\catu$.  By the isomorphism \eqref{equ:Yann_ast}
of Paragraph \ref{bfhpg:IY}
we get $\dim_k
\Ext_{\cat}^1(m,\ell) = 1$ and so the two triangles are isomorphic in
$\catu$ to the images of distinguished triangles in $\cat$:
\[
  m \rightarrow b \oplus \obju \rightarrow \ell, \;\;\;\;
  \ell \rightarrow b' \oplus \obju' \rightarrow m
\]
where $\obju, \obju' \in \ctsu$. By Remark~\ref{rk: identification}
we have that $Gm$, $G\ell$ have finite length, so Propositions
\ref{pro:CC} and \ref{pro:Palu} give
\[
  \CC^{\cts}(m) \CC^{\cts}(\ell) 
  = \CC^{\cts}(b \oplus \obju) + \CC^{\cts}(b' \oplus \obju')
  = \CC^{\cts}(b) \CC^{\cts}(\obju) + \CC^{\cts}(b') \CC^{\cts}(\obju')
\]
whence
$$
  \overline{\CC}^{\cts}(m) \overline{\CC}^{\cts}(\ell)
  = \overline{\CC}^{\cts}(b) + \overline{\CC}^{\cts}(b').
$$

Finally, (iv) follows as in the proof of Theorem \ref{thm:cluster_map}
since $\overline{\CC}^{\cts}(\obj)= x_\obj$ for each $\obj\in \indt
\setminus \indu$.
\end{proof}

\begin{Theorem}
\label{thm:CC_reduction}
The maps $\CC^{\cts/\ctsu}$ and $\overline{\CC}^{\cts}$ coincide on
$\objects\,\cF$.
\end{Theorem}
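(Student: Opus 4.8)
The plan is to compare the two Caldero--Chapoton expressions term by term. Fix $f \in \objects\,\cF$, identified (as in Remark~\ref{rk: identification}) with an object of ${}^\perp(\Sigma \ctsu) \subseteq \cat$ having no summands in $\ctsu$. On the $\catu$-side we have
\[
  \CC^{\cts/\ctsu}(f) = x^{-\coind_{\cts/\ctsu} \shiftu f} \sum_e \chi\big(\Gr_e(\funct_{\indu} f)\big) x^{\theta_{\cts/\ctsu}(e)},
\]
the sum over $e \in \K_0(\flength\,\cts/\ctsu)$, and on the $\cat$-side we have $\overline{\CC}^{\cts}(f)$, obtained from $\CC^{\cts}(f)$ by setting $x_{\obju}=1$ for $\obju \in \indu$. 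The three things to match are: (1) the indexing sets of the sums, via the embedding $\pi^{\ast}$; (2) the Grassmannian Euler characteristics; and (3) the exponents, i.e.\ the coindex and $\theta$ contributions, after the specialization $x_{\obju}=1$.

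First I would handle the index sets and the Grassmannians together. By Proposition~\ref{pro:pi}, $\pi^{\ast}$ is an exact fully faithful functor identifying $\mod\,\cts/\ctsu$ with the finitely presented $\cts$-modules vanishing on $\ctsu$, and it sends $\funct_{\indu} f = \catu(-,\shiftu f)|_{\cts/\ctsu}$ to $\funct f = \cat(-,\shift f)|_{\cts}$ (Paragraph~\ref{bfhpg:cluster_tilting}, using \eqref{equ:Yann_ast}). Moreover Proposition~\ref{pro:pi}(iii) gives a bijection between submodules of $\funct_{\indu} f$ and submodules of $\funct f$. Since $\pi^{\ast}$ restricts to an equivalence $\flength\,\cts/\ctsu \xrightarrow{\sim} \{\text{finite length } \cts\text{-modules vanishing on }\ctsu\}$, it induces an injection $\K_0(\flength\,\cts/\ctsu) \hookrightarrow \K_0(\flength\,\cts)$ identifying the class $[S_{\obju}]$-free part: concretely $[S_{\obj}] \mapsto [S_{\obj}]$ for $\obj \in \indt \setminus \indu$ and the simples $S_{\obju}$, $\obju \in \indu$, are not hit. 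Because $\funct f$ vanishes on $\ctsu$, all its submodules vanish on $\ctsu$, so every class $e'$ with $\Gr_{e'}(\funct f) \neq \emptyset$ lies in the image of this injection, say $e' = \pi^{\ast}_{*}(e)$; and the submodule bijection of Proposition~\ref{pro:pi}(iii) is visibly an isomorphism of the parametrizing algebraic spaces $\Gr_e(\funct_{\indu} f) \cong \Gr_{e'}(\funct f)$, hence $\chi(\Gr_e(\funct_{\indu} f)) = \chi(\Gr_{e'}(\funct f))$. Thus the two sums have matching (finite) index sets and matching coefficients.

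It remains to match the exponents after specialization. For the variable part: under $x_{\obju}=1$, the monomial $x^{\theta_{\cts}(e')} \in \BQ(x_\obj)_{\obj \in \indt}$ becomes $\prod_{\obj \in \indt\setminus\indu} x_\obj^{(\theta_{\cts}(e'))_\obj}$, so I must check that $(\theta_{\cts}(e'))_\obj = (\theta_{\cts/\ctsu}(e))_\obj$ for all $\obj \in \indt\setminus\indu$, where $e' = \pi^{\ast}_{*}(e)$. This is the compatibility of the $\theta$-maps with Calabi--Yau reduction: using the description $\theta([S_{\obj}]) = [c]-[c']$ from the exchange triangles in Paragraph~\ref{bfhpg:K}(2) (valid since we assume cluster structures), and the fact (Paragraph~\ref{bfhpg:IY}, end) that exchange triangles in $\catu$ are the images of exchange triangles in $\cat$ at objects outside $\ctsu$, the exchange triangle for $\obj \in \indt\setminus\indu$ in $\catu$ has the form obtained from the one in $\cat$ by projecting away the $\ctsu$-summands; hence $\theta_{\cts/\ctsu}([S_\obj])$ equals $\theta_{\cts}([S_\obj])$ with its $\indu$-components deleted. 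Extending linearly gives the claim for general $e$. Similarly, for the prefactor one must check $x^{-\coind_{\cts/\ctsu}\shiftu f}$ equals $x^{-\coind_{\cts}\shift f}$ after the specialization $x_{\obju}=1$, i.e.\ that $\coind_{\cts}\shift f$ and $\coind_{\cts/\ctsu}\shiftu f$ agree in all components indexed by $\indt\setminus\indu$. This follows by taking the defining triangle $\shiftu f \to \obju^0 \to \obju^1$ for the coindex in $\catu$ with $\obju^i \in \cts/\ctsu$, lifting it (via \eqref{equ:Yann_Delta} and the functoriality in Paragraph~\ref{bfhpg:IY}) to a triangle in $\cat$ with entries in $\cts$ whose $\ctsu$-summands are the only discrepancy, and reading off $\K$-classes in $\K_0^{\split}(\cts) \cong \Free(\indt)$; the $\indu$-components are killed by the specialization and the rest match. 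Assembling (1)--(3) gives $\overline{\CC}^{\cts}(f) = \CC^{\cts/\ctsu}(f)$.

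The main obstacle is step (3), the bookkeeping of coindex and $\theta$ under reduction: one must be careful that $\coind$ and $\theta$ are only defined up to the ambiguity handled by \cite[lem.\ 2.1(4)]{Palu}, that the minimal approximations defining $\shiftu$ introduce extra $\ctsu$-summands which disappear exactly under $x_{\obju}=1$, and that $\theta_{\cts}$ restricted along $\pi^{\ast}_{*}$ really is $\theta_{\cts/\ctsu}$ followed by the inclusion $\Free(\indt\setminus\indu)\hookrightarrow\Free(\indt)$ — this last point is where the assumption (Setup~\ref{set:CYred}) that cluster structures are present on both sides, together with the explicit exchange-triangle formula for $\theta$, does the real work. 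Everything else is a formal consequence of Proposition~\ref{pro:pi}.
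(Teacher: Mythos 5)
Your strategy is genuinely different from the paper's. The paper's proof is two lines: by Theorem~\ref{thm:cluster_map} (applied in $\catu$) and Proposition~\ref{pro:reduction}, both $\CC^{\cts/\ctsu}$ and $\overline{\CC}^{\cts}$ are cluster maps, both send $\obj\in\indt\setminus\indu$ to $x_\obj$, and a cluster map is determined by its values on one reachable cluster tilting subcategory (\cite[lem.~5.3]{Palu}), so they agree on all of $\objects\,\cF$. You instead attempt a direct term-by-term comparison of the two Caldero--Chapoton expressions. Your steps (1) and (2) --- matching the Grassmannians and Euler characteristics via Proposition~\ref{pro:pi} and Paragraph~\ref{bfhpg:cluster_tilting} --- are fine, and are exactly what the paper does when it \emph{does} argue term by term, namely in Theorem~\ref{thm:reductionCCmap}.

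The gap is in step (3), and it is precisely the point where the paper itself retreats to the extra orthogonality hypotheses \eqref{equ:U_conditions} of Section~\ref{sec:locally bounded}. Two concrete problems. First, your $\theta$-compatibility rests on the exchange-triangle formula $\theta_{\cts}([S_\obj])=[c]-[c']$ from Paragraph~\ref{bfhpg:K}(2), which is only available when $\cts$ belongs to a cluster structure on $\cat$; Setup~\ref{set:CYred} assumes a cluster structure only on $\catu$, not on $\cat$, so this formula (and the whole apparatus of exchange triangles in $\cat$) is not at your disposal. Second, your coindex claim --- that the triangle computing $\coind_{\cts/\ctsu}\shiftu f$ lifts to one computing $\coind_{\cts}\shift f$ ``whose $\ctsu$-summands are the only discrepancy'' --- is asserted, not proved. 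The lift involves comparing $\shiftu^2$ with $\shift^2$, hence two layers of the approximation triangles \eqref{equ:Yann_Delta}, and the paper's own Lemma~\ref{lem:index}(ii) establishes such a compatibility only under the additional hypothesis $f\in{}^{\perp}(\shift^2\ctsu)$, which fails for general $f\in\objects\,\cF$. (The needed ``up to $\indu$-components'' statements are, I believe, true --- e.g.\ one can deduce the coindex claim from the identity $\ind_{\cts}(c)=-\coind_{\cts}(\shift c)$ together with the fact that index triangles in $\cat$ descend to $\catu$ --- but none of this is in your write-up, and it is the real content of your approach.) As it stands, the argument establishes the conclusion only under hypotheses comparable to those of Theorem~\ref{thm:reductionCCmap}, not in the generality of Theorem~\ref{thm:CC_reduction}; the paper avoids all of this by the abstract uniqueness argument for cluster maps.
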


\begin{proof}
Both are cluster maps by Theorem \ref{thm:cluster_map} and Proposition
\ref{pro:reduction}.  Therefore it is enough to check that they
coincide on the indecomposable objects of $\cts / \ctsu$, cf.\
\cite[Lemma 5.3]{Palu}.  But if $\obj \in \indt\backslash\indu$, then
$\CC^{\cts/\ctsu}$ and $\overline{\CC}^{\cts}$ both take the value
$x_\obj$ on $\obj$.
\end{proof}

\begin{Corollary}\label{Corollary: pointwise finite}
Assume that the cluster tilting subcategories which can be reached
from $\cts$ form a cluster structure in $\cat$.
Then the map $\CC^{\cts}$ is ``pointwise finite'' in the following sense:
For each $\ell \in \objects \cE$, there exists a cluster tilting object
$R$ in a Calabi--Yau reduction of $\cat$ such that $\CC^{\cts}(\ell) =
X^R_\ell$, where $X^R$ is the cluster character of~\cite{Palu}.
\end{Corollary}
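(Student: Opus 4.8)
The plan is to choose a cofinite subset $\indu\subseteq\indt$ so that the cluster tilting subcategory $\cts/\ctsu$ of the Calabi--Yau reduction $\catu$ is a genuine cluster tilting \emph{object} $R$, so that $\ell$ survives into the corresponding subcategory $\cF$, and so that the substitution defining $\overline{\CC}^{\cts}$ has no effect on $\CC^{\cts}(\ell)$; then Theorems~\ref{thm:cluster_map} and~\ref{thm:CC_reduction} finish the job. To set this up, write $\ell=\ell_1\oplus\cdots\oplus\ell_r$ with the $\ell_i$ indecomposable, each lying in a cluster tilting subcategory $\cts^{(i)}$ reachable from $\cts$. By the proof of Theorem~\ref{thm:cluster_map} each $\funct\ell_i$, and hence $\funct\ell$, has finite length with finite support, so $\CC^{\cts}(\ell)$ is well defined in $\BZ[x_\obj,x_\obj^{-1}]_{\obj\in\indt}$ by Paragraph~\ref{bfhpg:CC}; moreover it is a \emph{finite} $\BZ$-combination of monomials $x^{-\coind_{\cts}\shift\ell+\theta_{\cts}(e)}$ whose exponents lie in the free group $\K_0^{\split}(\cts)\cong\Free(\indt)$ and so are finitely supported. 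Consequently only finitely many variables $x_\obj$ occur in $\CC^{\cts}(\ell)$. I would then pick a finite set $W\subseteq\indt$ which (a) contains each $\obj$ for which $x_\obj$ occurs in $\CC^{\cts}(\ell)$, (b) contains every indecomposable summand of $\ell$ lying in $\indt$, and (c) is, for each $i$, large enough that some mutation sequence realizing $\cts^{(i)}$ from $\cts$ can be carried out at indecomposables lying outside $\add(\indt\setminus W)$; such a $W$ exists because each condition involves only finitely many indecomposables. Put $\indu=\indt\setminus W$ and $\ctsu=\add\indu$.

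Next I would verify the hypotheses of Section~\ref{sec:CYclmaps} and that $\ell\in\objects\cF$. Since $\indt\setminus\indu=W$ is finite, $\ctsu$ is functorially finite in $\cat$ by Lemma~\ref{lem: functorially finite}; since by hypothesis the cluster tilting subcategories reachable from $\cts$ form a cluster structure, Lemma~\ref{lem: reduction cluster str} shows that those reachable from $\cts/\ctsu$ form a cluster structure in $\catu$, so Setup~\ref{set:CYred} is in force. As $\Supp\funct\ell\subseteq W$ is disjoint from $\indu$, the module $\funct\ell$ vanishes on $\ctsu$, whence $\ell\in{}^{\perp}(\shift\ctsu)$ by Proposition~\ref{pro:pi}(ii); and $\ell$ has no indecomposable summand in $\ctsu$, those lying in $\indt$ being in $W$ by (b) and the remaining ones not lying in $\cts\supseteq\ctsu$. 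So $\ell$ is identified with an object of $\catu$ as in Remark~\ref{rk: identification}. Finally, by (c) we have $\ctsu=\add(\indt\setminus W)\subseteq\cts^{(i)}$ and the mutations from $\cts$ to $\cts^{(i)}$ take place outside $\ctsu$, so by Paragraph~\ref{bfhpg:IY} each $\cts^{(i)}$ descends to a cluster tilting subcategory of $\catu$ reachable from $\cts/\ctsu$ and containing $\ell_i$. Hence $\ell\in\add F=\cF$.

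It remains to assemble the pieces. Put $R=\cts/\ctsu$; since it has only the finitely many indecomposables in $W$, it is a cluster tilting object in the $2$-Calabi--Yau category $\catu$, and $X^R$ denotes the cluster character of~\cite{Palu}. Both $\CC^{\cts/\ctsu}$ and $X^R$ are cluster maps on $\objects\cF$ --- the former by Theorem~\ref{thm:cluster_map} applied in $\catu$, the latter by \cite[Thm.~1.4]{Palu} --- and both send each $\obj\in\ind R$ to $x_\obj$, so they coincide on all of $\objects\cF$ by \cite[Lemma~5.3]{Palu}; in particular $\CC^{\cts/\ctsu}(\ell)=X^R_\ell$. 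On the other hand, by (a) no variable $x_\obju$ with $\obju\in\indu$ occurs in $\CC^{\cts}(\ell)$, so $\CC^{\cts}(\ell)=\overline{\CC}^{\cts}(\ell)$, while Theorem~\ref{thm:CC_reduction} gives $\overline{\CC}^{\cts}(\ell)=\CC^{\cts/\ctsu}(\ell)$; combining, $\CC^{\cts}(\ell)=X^R_\ell$, as required. There is no deep point here: the entire content is the bookkeeping of Step~1, arranging one finite set $W$ that is simultaneously large enough for $\ell$ and all its summands to survive the reduction into $\cF$ and ``small enough'' --- disjoint from the support of every exponent occurring in $\CC^{\cts}(\ell)$ --- that erasing the variables $x_\obju$, $\obju\in\indu$, is invisible to $\CC^{\cts}(\ell)$; this is possible precisely because $\K_0^{\split}(\cts)$ is free, so all the indices, coindices, and classes $\theta_{\cts}(e)$ in the Caldero--Chapoton formula are automatically finitely supported.
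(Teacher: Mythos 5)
Your proof is correct and follows essentially the same route as the paper's: choose a cofinite $\indu \subseteq \indt$ whose finite complement contains the support of $\funct \ell$ and all variables occurring in $\CC^{\cts}(\ell)$, check the hypotheses via Lemma~\ref{lem: functorially finite} and Lemma~\ref{lem: reduction cluster str}, and conclude with Theorem~\ref{thm:CC_reduction}; your extra bookkeeping (condition (c) ensuring $\ell \in \objects\,\cF$, and the identification $\CC^{\cts/\ctsu} = X^{R}$ via \cite[Lemma 5.3]{Palu}) merely makes explicit what the paper leaves implicit. One small remark: the inclusion $\Supp \funct\ell \subseteq W$ that you use is not literally among (a)--(c), but it does follow from (c), since then $\ctsu \subseteq \cts^{(i)}$ and rigidity of $\cts^{(i)}$ force $\funct\ell_i$ to vanish on $\ctsu$ (alternatively, just add the finite support of $\funct\ell$ to $W$, as the paper does with its $t_1,\ldots,t_r$).
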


\begin{proof}
We may take $\ell \in \objects \cE$ to be indecomposable, and it sits
in some cluster tilting subcategory obtained from $\cts$ by a finite
number of mutations.  Therefore the spaces $\cat(t, \shift\ell)$
vanish for all but finitely many $t_1,\ldots,t_r \in T$. There exist
$t_{r+1},\ldots,t_m \in T$ such that $\CC^\cts(\ell)$ belongs to
$\BQ(x_{t_1},\ldots, x_{t_m}) \subset
\BQ(x_t)|_{t\in T}$. Let $R$ be the direct sum
$t_1\oplus\cdots\oplus t_m$ and let 
$\indu$ be $\indt \setminus \{\, t_1, \ldots, t_m \,\}$.
Lemma~\ref{lem: functorially finite} ensures
that the subcategory $\ctsu = \add \indu$
is functorially finite.
By Lemma~\ref{lem: reduction cluster str},
the cluster tilting object $R$ defines a cluster
structure on the Calabi--Yau reduction $\catu$.
We can now apply Theorem~\ref{thm:CC_reduction}
to get the desired result.
\end{proof}

\begin{Remark}
 P-G. Plamondon recently defined, in~\cite{Plamondon},
a Caldero--Chapoton map associated with any rigid object
(in a not necessarily Hom-finite triangulated category)
and proved that it behaves like a cluster map.
Assume that the rigid object is a direct summand of a cluster tilting
object. It would be interesting to adapt the methods
used above in order to relate the Caldero--Chapoton map
of~\cite{Plamondon} with the one on the Calabi--Yau
reduction in which the rigid object becomes cluster tilting.
\end{Remark}

\section{Calabi-Yau reductions in the locally bounded case}
\label{sec:locally bounded}

In the previous section, Theorem \ref{thm:CC_reduction} showed a
degree of compatibility between Calabi-Yau reductions and the
Caldero-Chapoton map.  This section goes further under the additional
assumption that the cluster tilting subcategory is locally bounded.
Following~\cite{LenzingReiten}, a full subcategory $\cV$ of $\cat$ is
called locally bounded if, for each indecomposable object $v$ in
$\cV$, there are only finitely many (isomorphism classes of)
indecomposable objects $w$ in $\cV$ such that $\cat(v,w) \neq 0$ or
$\cat(w,v)\neq 0$.

\begin{Setup}
\label{set:first}
In this section we let $\indu \subseteq \indt$ be a subset for which
$\ctsu = \add \indu$ is functorially finite in $\cat$.  (See Lemma
\ref{lem: functorially finite} for examples of such subsets.)
\end{Setup}

Let
\[
  \kappa : \kospu \hookrightarrow \kosp
\]
be the canonical inclusion.

\begin{Lemma}
\label{lem:index}
Let $c \in {}^{\perp}(\Sigma \ctsu)$; in particular, $c$ can be viewed
as an object of $\catu$ as well as of $\cat$.
\begin{enumerate}
  \item If $c \in {}^\perp\ctsu$, then
$\indext \shift c = \kappa(\indexu \shiftu c)$.

\smallskip

  \item If $c \in {}^\perp(\shift^2\ctsu)$, then
$\coindext \shift c = \kappa(\coindexu \shiftu c)$.
\end{enumerate}
\end{Lemma}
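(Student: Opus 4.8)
The plan is to prove each part by explicitly comparing the defining triangles for index and coindex in $\cat$ against those in $\catu$, using the structure of triangles in the Calabi--Yau reduction recalled in Paragraph \ref{bfhpg:IY}.

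For part (i), I would start with a distinguished triangle in $\cat$ computing the index of $\shiftu c$ as an object of $\catu$: write $\obju_1 \to \obju_0 \to \shiftu c$ with $\obju_i$ in $\cts/\ctsu$, so that $\indexu \shiftu c = [\obju_0] - [\obju_1]$ in $\kospu$. By the description of triangles in $\catu$, this comes from a distinguished triangle in $\cat$ with entries in ${}^\perp(\shift\ctsu)$, which after adjusting by summands in $\ctsu$ has the form $\obju_1 \oplus u_1 \to \obju_0 \oplus u_0 \to \shiftu c$ with $u_i \in \ctsu$. Now I need to relate $\shiftu c$ to $\shift c$. Here is where the hypothesis $c \in {}^\perp\ctsu$ enters: the triangle \eqref{equ:Yann_Delta} reads $c \to \obju_c \to \shiftu c \to \shift c$ where $c \to \obju_c$ is the minimal left-$\ctsu$-approximation, and since $c \in {}^\perp\ctsu$ this approximation is zero, forcing $\obju_c = 0$ and hence $\shiftu c \cong \shift c$ in $\cat$. (I should double check the direction of the orthogonality convention, but morally $\cat(c, u) = 0$ for $u \in \ctsu$ kills the approximation.) Then the triangle above directly computes $\indext \shift c = [\obju_0 \oplus u_0] - [\obju_1 \oplus u_1] = ([\obju_0] - [\obju_1]) + ([u_0] - [u_1])$ in $\kosp$. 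The remaining point is that the $\ctsu$-summands cancel: this should follow because the triangle, being the image of a minimal-type defining triangle, can be taken with no common $\ctsu$-summands, or alternatively because index is well-defined independently of the chosen triangle (by \cite[lem.\ 2.1]{Palu}) so we may choose the triangle with entries in $\cts/\ctsu$ as a subcategory of $\cts$, giving exactly $\kappa([\obju_0] - [\obju_1]) = \kappa(\indexu \shiftu c)$.

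For part (ii), the argument is dual in spirit. I would pick a triangle $\shiftu c \to \obju^0 \to \obju^1$ in $\catu$ with $\obju^i \in \cts/\ctsu$ computing $\coindexu \shiftu c$ via the shifted triangle $\shiftu c \to \shiftu^2 \obju^0 \to \shiftu^2 \obju^1$; but more conveniently, use the characterization that $\coind$ of an object $x$ is read from a triangle $x \to \shift^2 t^0 \to \shift^2 t^1$. The stronger hypothesis $c \in {}^\perp(\shift^2 \ctsu)$ is needed to guarantee that the relevant approximation triangle defining $\shiftu^2$ (the square of the suspension in $\catu$) again degenerates so that $\shiftu^2 c$ and $\shift^2 c$ agree up to $\ctsu$-summands — iterating the argument from part (i), applying it once to $c$ and once to $\shiftu c$, where $\shiftu c \in {}^\perp(\shift\ctsu)$ needs $c \in {}^\perp(\shift^2 \ctsu)$ for the second step to go through. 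Then the same cancellation of $\ctsu$-summands in the split Grothendieck group gives $\coindext \shift c = \kappa(\coindexu \shiftu c)$.

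The main obstacle I anticipate is bookkeeping the $\ctsu$-summands and the shifts correctly: the suspension $\shiftu$ in $\catu$ is only defined up to the approximation triangle \eqref{equ:Yann_Delta}, and translating between "$\shiftu c$ as an object of $\cat$" and "$\shift c$" requires carefully tracking that the difference is a $\ctsu$-summand which dies in $\kospu \hookrightarrow \kosp$ — or rather, showing it contributes trivially because the defining triangles can be chosen compatibly. The hypotheses ${}^\perp\ctsu$ and ${}^\perp(\shift^2\ctsu)$ are precisely what is needed to make the approximations vanish (once, resp.\ twice), so the key insight is recognizing that under these hypotheses the passage $c \mapsto \shift c$ versus $c \mapsto \shiftu c$ introduces no genuine discrepancy at the level of (co)index. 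I would also want to invoke the well-definedness of index/coindex from \ref{bfhpg:Palu} to avoid worrying about minimality of the chosen triangles.
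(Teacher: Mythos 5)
Your opening move for (i) is the same as the paper's: since $c \in {}^\perp\ctsu$ the approximation $\obju_c$ in the triangle \eqref{equ:Yann_Delta} vanishes, so $\shiftu c \cong \shift c$. But from there you go in the opposite direction (lift an index triangle from $\catu$ to $\cat$), and the step you flag as "remaining" is exactly the one that carries the content of the lemma, and neither of your two justifications closes it. Having "no common $\ctsu$-summands" does not force $[u_0]=[u_1]$; and well-definedness of the index only says that any two admissible triangles yield the same class in $\kosp$ --- it does not let you \emph{choose} a triangle whose terms avoid $\ctsu$. The paper proves precisely that such a choice is possible, and does so by working in the other direction: take the triangle $\obj_1 \to \obj_0 \to \shift c$ in $\cat$ with $\obj_0 \to \shift c$ right-minimal (so $\shift c \to \shift\obj_1$ is left-minimal), and observe that $c \in {}^\perp(\shift\ctsu)$ plus $2$-Calabi--Yau gives $\shift c \in \ctsu^\perp$ (killing $\ctsu$-summands of $\obj_0$ by right-minimality), while $c \in {}^\perp\ctsu$ gives $\shift c \in {}^\perp(\shift\ctsu)$ (killing $\ctsu$-summands of $\obj_1$ by left-minimality). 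Only then does one descend this triangle to $\catu$ and read off that $\kappa$ matches the two indices. You need this minimality-plus-orthogonality argument (or an equivalent) somewhere; without it the $\ctsu$-contribution $[u_0]-[u_1]$ is unaccounted for.

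Part (ii) as you describe it would fail: the hypothesis there is $c \in {}^\perp(\shift^2\ctsu)$, \emph{not} $c \in {}^\perp\ctsu$, so the approximation $\obju_c$ need not vanish and $\shiftu c$ need not be isomorphic to $\shift c$. Your plan to "iterate part (i)" therefore does not get off the ground, and computing $\shiftu^2 c$ versus $\shift^2 c$ is in any case not what the coindex comparison requires. The paper avoids all of this by a duality: the coindex of $\shift c$ with respect to $(\cat,\cts)$ is the index of $\shift^{-1}c$ with respect to $(\cat^{\operatorname{op}},\cts^{\operatorname{op}})$, and the $2$-Calabi--Yau property converts the hypothesis via ${}^\perp(\shift^2\ctsu)=\ctsu^\perp$, so (ii) is formally the dual of (i). You should either run that duality or redo the minimal-triangle argument for the coindex triangle $\shift c \to \shift^2\obj^0 \to \shift^2\obj^1$ directly.
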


\begin{proof}
(i) In the distinguished triangle $c \fl \obju_c \fl \shiftu c \fl
\shift c$ we have $\obju_c = 0$ since $c \in \,^\perp\ctsu$, so
$\shiftu c$ and $\shift c$ are isomorphic in $\catu$.  Pick a
distinguished triangle $\obj_1 \fl \obj_0 \fl \shift c \fl \shift
\obj_1$ with $\obj_i \in \cT$ and $\obj_0 \rightarrow \Sigma c$
right-minimal; note that $\Sigma c \rightarrow \Sigma \obj_1$ is then
left-minimal.  Since $c \in {}^{\perp}(\Sigma \ctsu)$ it follows that
$\Sigma c \in \ctsu^{\perp}$ because $\cat$ is $2$-Calabi-Yau.  And $c
\in {}^{\perp}\ctsu$ so $\Sigma c \in {}^{\perp}(\Sigma \ctsu)$.
Therefore $\obj_0$ and $\obj_1$ have no direct summands in $\ctsu$.

But $\obj_0,\obj_1,\shift c$ belong to $^\perp(\shift \ctsu)$, so
there is an induced distinguished triangle $\obj_1 \fl \obj_0 \fl
\shift c \fl \shiftu \obj_1$ in $\catu$ which, by the beginning of the
proof, is $\obj_1 \fl \obj_0 \fl \shiftu c \fl \shiftu \obj_1$.  Since
the objects $\obj_i$ have no direct summands in $\ctsu$, the morphism
$\kappa$ sends $[\obj_0]-[\obj_1]$ in $\kospu$ to $[\obj_0]-[\obj_1]$
in $\kosp$.

(ii) This is dual to (i) because the coindex of $\shift c$ in
$(\cat,\cts)$ is the index of $\shift^{-1} c$ in
$(\cat^\text{op},\cts^\text{op})$, $\shift^{-1}$ is the shift functor
in $\cat^\text{op}$, and $^\perp(\shift^2 \ctsu)=\ctsu^\perp$ by the
$2$-Calabi-Yau property.
\end{proof}

\begin{Setup}
In the rest of this section, we assume that the cluster tilting
sub\-ca\-te\-go\-ry $\cts$ is locally bounded.
\end{Setup}

\begin{Remark}
\label{rmk:locally_bounded}
Let $c \in \cat$.  There is a right-$\cts$-approximation $\obj_0 \fl c$ and a
left-$\cts$-approximation $c \fl \obj^0$. Since $\cts$ is locally bounded,
this implies that there are only finitely many indecomposable objects
$\obj\in\indt$ such that $\cat(\obj,c) \neq 0$ or $\cat(c,\obj) \neq
0$.

This implies that the functor $\funct c = \cat(-,\Sigma c)|_{\cts}$
has finite length in $\mod\,\cts$.  By Paragraph \ref{bfhpg:CC} the
Caldero--Chapoton map $\CC^{\cts}$ is defined on all the objects of
$\cat$ whence $\CC^{\cts}$ is a cluster character~\cite{Palu}.  It
also means that any subset $\indu \subseteq \indt$ can be used in
Setup \ref{set:first} because $\ctsu = \add\,\indu$ is always
functorially finite in $\cat$.

Now let $s_1,\ldots,s_n\in\cat$, without non-zero direct summands in
$\cts$, be such that the modules $\cat(-,\shift s_i)|_\cts$ are the
simple composition factors of $\cat(-,\shift c)|_\cts$.  Consider the
following condition on $\indu\subseteq\indt$:
\begin{equation}
\label{equ:U_conditions}
  \mbox{
    $c \in \,^\perp(\shift \ctsu) \cap {}^\perp(\shift^2\ctsu)$
    and
    $s_i \in \,^\perp\ctsu \cap \,^\perp(\shift\ctsu) \cap
    \,^\perp(\shift^2\ctsu)$
    for each $i$;}
\end{equation}
there are such sets with $\indt \setminus \indu$ finite
by the first part of the Remark, and as noted,
$\ctsu = \add\,\indu$ is necessarily functorially finite.

We will draw some conclusions about this situation in Lemma
\ref{lem:exponents} and Theorem \ref{thm:reductionCCmap}.  Observe
that since $c \in {}^{\perp}(\Sigma \ctsu)$, we can view $c$ 
as an object of both $\cat$ and $\catu$.
\end{Remark}

Consider the (non-commutative) square
$$
\xymatrix{
\kou \ddrm^{K_0(\pi^\ast)} \down_{\theta_{\cts/\ctsu}} & &
\ko \down^{\theta_\cts}                                \\
\kospu \ddrm_\kappa                                    & &
\kosp
}
$$
where $\theta_\cts$, $\theta_{\cts/\ctsu}$ are defined in Paragraph
\ref{bfhpg:K} and $\kappa$ above.

\begin{Lemma}
\label{lem:exponents}
Under the above conditions \eqref{equ:U_conditions}, let $e$ be a
class in $\kou$ coming from a submodule of $\catu(-,\shiftu
c)|_{\cts/\ctsu}$. Then we have $$\theta_\cts\circ K_0(\pi^\ast)(e) =
\kappa\circ\theta_{\cts/\ctsu}(e).  $$
\end{Lemma}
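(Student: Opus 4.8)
The plan is to reduce the asserted identity $\theta_\cts \circ K_0(\pi^\ast)(e) = \kappa \circ \theta_{\cts/\ctsu}(e)$ to the two compatibility statements for index and coindex proved in Lemma~\ref{lem:index}. Recall from Paragraph~\ref{bfhpg:K} that $\theta_\cts$ is defined on a class $[\funct d]$ by the formula $\theta_\cts([\funct d]) = \coindext \shift d - \indext \shift d$, and similarly $\theta_{\cts/\ctsu}$ on a class $[\funct_\indu d]$ is $\coindexu \shiftu d - \indexu \shiftu d$. Since both sides of the claimed equation are additive in $e$ and $e$ comes from a submodule of $\catu(-,\shiftu c)|_{\cts/\ctsu}$, a finite length module, it suffices to prove the identity for $e = [M]$ with $M$ a single submodule of this functor. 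Such an $M$ is a $\cts/\ctsu$-module of finite length, hence is isomorphic to $\funct_\indu d$ for some $d \in {}^\perp(\shift\ctsu)$, and by Proposition~\ref{pro:pi}(ii) we have $\pi^\ast M \cong \funct d$ as a $\cts$-module (via the identification of Paragraph~\ref{bfhpg:cluster_tilting}, the functor $\pi^\ast$ sends $\catu(-,\shiftu d)|_{\cts/\ctsu}$ to $\cat(-,\shift d)|_\cts$).

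First I would check that the object $d$ can be chosen to satisfy the orthogonality hypotheses needed to apply Lemma~\ref{lem:index}, namely $d \in {}^\perp\ctsu$ and $d \in {}^\perp(\shift^2\ctsu)$. This is where condition~\eqref{equ:U_conditions} enters: the module $M$ is a submodule of $\catu(-,\shiftu c)|_{\cts/\ctsu}$, whose composition factors are among those of $\funct c$; by the hypothesis on $c$ and the $s_i$ in~\eqref{equ:U_conditions}, and using that $\flength\,\cts$ is closed under extensions together with the long exact sequences coming from the defining triangles, one gets that the relevant representing object $d$ lies in ${}^\perp\ctsu \cap {}^\perp(\shift\ctsu) \cap {}^\perp(\shift^2\ctsu)$. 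Concretely: choose $d$ built by iterated extensions of the $s_i$ inside $\cat$, so the orthogonality to $\ctsu$, $\shift\ctsu$, $\shift^2\ctsu$ is inherited factor by factor from the corresponding properties of the $s_i$.

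Then the computation is immediate: by Lemma~\ref{lem:index}(i), since $d \in {}^\perp\ctsu$, we have $\indext \shift d = \kappa(\indexu \shiftu d)$; by Lemma~\ref{lem:index}(ii), since $d \in {}^\perp(\shift^2\ctsu)$, we have $\coindext \shift d = \kappa(\coindexu \shiftu d)$. Subtracting and using that $\kappa$ is a group homomorphism,
\[
  \theta_\cts([\funct d]) = \coindext \shift d - \indext \shift d
  = \kappa\bigl(\coindexu \shiftu d - \indexu \shiftu d\bigr)
  = \kappa\bigl(\theta_{\cts/\ctsu}([\funct_\indu d])\bigr).
\]
Since $[\funct d] = K_0(\pi^\ast)(e)$ and $[\funct_\indu d] = e$, this is exactly the claim for the single class $e = [M]$, and additivity finishes the general case.

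**Main obstacle.** The routine part is the final display; the real work is the bookkeeping in the second paragraph—verifying that a submodule $M$ of $\catu(-,\shiftu c)|_{\cts/\ctsu}$ is represented by an object $d$ lying in all three orthogonal subcategories ${}^\perp\ctsu$, ${}^\perp(\shift\ctsu)$, ${}^\perp(\shift^2\ctsu)$, so that both parts of Lemma~\ref{lem:index} are applicable. This is precisely why the somewhat technical hypotheses~\eqref{equ:U_conditions} on $c$ and on the simple factors $s_i$ were imposed in Remark~\ref{rmk:locally_bounded}; the point is that the condition propagates along extensions, and one must spell out why. A secondary subtlety is making sure the identification of Paragraph~\ref{bfhpg:cluster_tilting} is used consistently, i.e.\ that $K_0(\pi^\ast)[M]$ really equals $[\funct d]$ for the $d$ chosen; but Proposition~\ref{pro:pi}(ii) handles this cleanly.
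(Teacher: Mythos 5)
Your proof is correct in substance but takes a genuinely different route from the paper's. The paper never constructs a representing object $d$ for the submodule $M$: it observes (via Proposition \ref{pro:pi}(iii) and Paragraph \ref{bfhpg:cluster_tilting}) that the composition factors of $\catu(-,\shiftu c)|_{\cts/\ctsu}$ are exactly the $\catu(-,\shiftu s_i)|_{\cts/\ctsu}$, so that in $\kou$ one has $e = \sum_i n_i e_i$ with $e_i = [\catu(-,\shiftu s_i)|_{\cts/\ctsu}]$; since both $\theta_\cts\circ K_0(\pi^\ast)$ and $\kappa\circ\theta_{\cts/\ctsu}$ are group homomorphisms, the identity only needs to be checked on the $e_i$, i.e.\ Lemma \ref{lem:index} is applied only to the objects $s_i$, which lie in ${}^\perp\ctsu\cap{}^\perp(\shift\ctsu)\cap{}^\perp(\shift^2\ctsu)$ by hypothesis \eqref{equ:U_conditions} -- there is nothing to propagate. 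You instead realize the whole submodule $M$ as $\funct_{\indu}d$ for a single object $d$ obtained by iterated extensions of the $s_i$, and apply Lemma \ref{lem:index} to $d$. This works, but the ``main obstacle'' you identify is entirely created by this choice: you must lift the composition series of $M$ to a tower of distinguished triangles in $\cat$ (using that short exact sequences in $\mod\cts$ come from triangles, as in the proof of Proposition \ref{pro:pi}(i) via \cite[lem.\ 3.1]{Palu}, and lifting the connecting morphisms through the isomorphism \eqref{equ:Yann_ast}) before the long-exact-sequence argument gives the orthogonality of $d$; you assert this rather than prove it. The $K_0$-decomposition buys you exactly this: additivity replaces the object-level construction, and the only inputs are Lemma \ref{lem:index} for the $s_i$ and the identification of simple composition factors. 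Your argument is salvageable as written once the lifting step is spelled out, but the paper's is shorter and avoids the delicate point about summands from $\ctsu$ appearing when passing between triangles in $\catu$ and in $\cat$.
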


\begin{proof}
Since the objects $c$, $s_1,\ldots,s_n$ belong to the subcategory
$^\perp(\shift\ctsu)$, the functors $\catu(-,\shiftu
c)|_{\cts/\ctsu}$, $\catu(-,\shiftu s_i)|_{\cts/\ctsu}$ are well
defined and finitely presented. Moreover, by
Paragraph~\ref{bfhpg:cluster_tilting}, their images under $\pi^\ast$
are $\cat(-,\shift c)|_\cts$, $\cat(-,\shift s_i)|_\cts$.  It follows
from Proposition \ref{pro:pi}(iii) that the modules $\catu(-,\shiftu
s_i)|_{\cts/\ctsu}$ are the simple composition factors of
$\catu(-,\shiftu c)|_{\cts/\ctsu}$ in $\modu$.  Denote by $e_i$ the
class of $\catu(-,\shiftu s_i)|_{\cts/\ctsu}$ in $\kou$. So the class
$e$ is a linear combination of the classes $e_i$ and it is enough to
prove the proposition for the $e_i$.  However,
$$K_0(\pi^\ast)(e_i)
= \left[\cat(-,\shift s_i)|_\cts\right],$$
so
$$\theta_\cts\circ K_0(\pi^\ast)(e_i) =
\coindext \shift s_i - \indext \shift s_i.$$

On the other hand,
$$\theta_{\cts/\ctsu}(e_i) =
\coindexu \shiftu s_i - \indexu \shiftu s_i.$$
The result thus follows from Lemma~\ref{lem:index} because
$s_i\in\,^\perp\ctsu\cap
\,^\perp(\shift\ctsu)\cap\,^\perp(\shift^2\ctsu)$.
\end{proof}

\begin{Theorem}
\label{thm:reductionCCmap}
Under the above conditions \eqref{equ:U_conditions}, we have
\[
  \CC^\cts(c) = \CC^{\cts/\ctsu}(c).
\]
\end{Theorem}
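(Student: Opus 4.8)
The plan is to transport the Caldero--Chapoton formula for $c$ in $\cat$ to the one for $c$ in $\catu$ by means of the exact fully faithful functor $\pi^\ast$ of Paragraph~\ref{bfhpg:cluster_tilting}, and to compare the two formulae term by term. First I would check that both sides make sense. The hypotheses~\eqref{equ:U_conditions} exhibit the simple composition factors $\cat(-,\shift s_i)|_\cts$ of $\funct c = \cat(-,\shift c)|_\cts$, so $\funct c$ has finite length in $\mod\,\cts$; and since $c \in {}^\perp(\shift\ctsu)$, the $\cts/\ctsu$-module $\funct_\indu c = \catu(-,\shiftu c)|_{\cts/\ctsu}$ is defined, and it is sent by $\pi^\ast$ to $\funct c$ by Paragraph~\ref{bfhpg:cluster_tilting} and the isomorphism~\eqref{equ:Yann_ast}; hence $\funct_\indu c$ also has finite length. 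So $\CC^\cts(c)$ and $\CC^{\cts/\ctsu}(c)$ are well-defined Laurent polynomials by Paragraph~\ref{bfhpg:CC}.

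Next I would match the Grassmannian contributions. Every submodule of $\funct c$ vanishes on $\ctsu$ because $\funct c$ does, so by Proposition~\ref{pro:pi} the functor $\pi^\ast$ is an equivalence from $\mod\,\cts/\ctsu$ onto the $\cts$-modules vanishing on $\ctsu$ and identifies the submodules of $\funct_\indu c$ with those of $\funct c$. As $\pi^\ast$ is exact and carries the simple $\cts/\ctsu$-modules to the simple $\cts$-modules $S_\obj$ with $\obj \in \indt\setminus\indu$ (by Proposition~\ref{pro:pi}), the induced map $\K_0(\flength\,\cts/\ctsu) \to \K_0(\flength\,\cts)$ is the evident inclusion, and every class $e$ with $\Gr_e(\funct c) \neq \emptyset$ is the image $K_0(\pi^\ast)(\bar e)$ of a unique $\bar e$. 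The identification $\pi^\ast\funct_\indu c \cong \funct c$ is an isomorphism of functors, so it induces an isomorphism of algebraic spaces $\Gr_{\bar e}(\funct_\indu c) \cong \Gr_e(\funct c)$ (here the ambient vector spaces match: $\cat(\obj,\shift c) = 0$ for $\obj\in\indu$ by the $2$-Calabi--Yau property, and $\cat(\obj,\shift c) \cong \catu(\obj,\shiftu c)$ for $\obj\in\indt\setminus\indu$ by~\eqref{equ:Yann_ast}); hence the Euler characteristics coincide and the sum $\sum_e \chi(\Gr_e(\funct c))\,x^{\theta_\cts(e)}$ collapses to one indexed by $\bar e \in \K_0(\flength\,\cts/\ctsu)$.

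Now I would match exponents and the monomial prefactor. For $\bar e$ as above, Lemma~\ref{lem:exponents} --- whose hypotheses are exactly the conditions placed on $c$ and the $s_i$ in~\eqref{equ:U_conditions} --- gives $\theta_\cts(K_0(\pi^\ast)\bar e) = \kappa(\theta_{\cts/\ctsu}(\bar e))$, while Lemma~\ref{lem:index}(ii), applicable because $c \in {}^\perp(\shift^2\ctsu)$, gives $\coind_\cts \shift c = \kappa(\coind_{\cts/\ctsu}\shiftu c)$ for the prefactor. Since $\kappa$ is the inclusion of the free abelian group on $\indt\setminus\indu$ into the free abelian group on $\indt$, one has $x^{\kappa(\alpha)} = x^\alpha$ once $\BQ(x_\obj)_{\obj\in\indt\setminus\indu}$ is regarded inside $\BQ(x_\obj)_{\obj\in\indt}$; in particular $\CC^\cts(c)$ involves no variable $x_\obju$ with $\obju\in\indu$. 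Substituting these identities into the defining formula
\[
  \CC^\cts(c) = x^{-\coind_\cts\shift c}\sum_e \chi(\Gr_e(\funct c))\,x^{\theta_\cts(e)}
\]
rewrites the right-hand side, term by term, as $x^{-\coind_{\cts/\ctsu}\shiftu c}\sum_{\bar e}\chi(\Gr_{\bar e}(\funct_\indu c))\,x^{\theta_{\cts/\ctsu}(\bar e)}$, which is $\CC^{\cts/\ctsu}(c)$; this is the claim.

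The main work, and the only genuinely delicate point, is the middle step: keeping the index sets $\{e\}$ and $\{\bar e\}$ in exact correspondence while tracking the various $\K$-theory maps, and confirming that the submodule bijection of Proposition~\ref{pro:pi}(iii) is an isomorphism of algebraic spaces rather than merely of point sets, so that it preserves Euler characteristic. Once this is in place, the rest is a formal substitution using Lemmas~\ref{lem:index} and~\ref{lem:exponents}.
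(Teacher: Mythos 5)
Your proposal is correct and follows essentially the same route as the paper's proof: match the prefactor via Lemma~\ref{lem:index}(ii), identify the Grassmannians via the submodule bijection of Proposition~\ref{pro:pi}(iii) induced by $\pi^\ast$, and match the exponents via Lemma~\ref{lem:exponents}. Your additional checks (finite length of both modules, the identification of simples, and the fact that the ambient vector spaces of the Grassmannians agree) are correct elaborations of steps the paper leaves implicit.
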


\begin{proof}
By Lemma~\ref{lem:index}(ii) we have $\coindext\shift c =
\kappa(\coindexu\shiftu c)$, so we can concentrate on the sums in the
Caldero-Chapoton formula.

We know from Paragraph~\ref{bfhpg:cluster_tilting} that the functor
$\pi^\ast$ sends the finitely presented $\cts/\ctsu$-module
$\catu(-,\shiftu c)|_{\cts / \ctsu}$ to the $\cts$-module
$\cat(-,\shift c)|_{\cts}$, and from Proposition \ref{pro:pi}(iii)
that it induces a bijection from the isomorphism classes of submodules
of $\catu(-,\shiftu c)|_{\cts / \ctsu}$ in $\modu$ to the isomorphism
classes of submodules of $\cat(-,\shift c)|_{\cts}$ in $\modt$.

Let $e$ be in $\kou$.  Then $\pi^\ast$ induces an isomorphism of
varieties from $\gre \big( \catu(-,\shiftu c)|_{\cts/\ctsu} \big)$ to
$\operatorname{Gr}_{K_0(\pi^\ast)(e)} \big( \cat(-,\shift c)|_\cts \big)$.
Moreover, the classes in the Grothendieck group
$\ko$ corresponding to non-vanishing terms in
$\CC^\cts(c)$ are all of the form $K_0(\pi^\ast)(e)$, for some
$e\in\kou$.  Therefore it only remains to be checked that
$$
  \xs^{\theta_{\cts/\ctsu}(e)} =
  \xs^{\theta_\cts\big(K_0(\pi^\ast)e\big)}
$$
for elements $e \in \kou$ representing
submodules of $\catu(-,\shiftu c)|_{\cts/\ctsu}$.
But this holds by Lemma~\ref{lem:exponents}.
\end{proof}

\section{The cluster category of Dynkin type $A_{\infty}$}
\label{sec:HJ}

This section applies the results of the previous sections to $\cD$, the
cluster category of type $A_{\infty}$ of \cite{HJ}.  It has no
cluster tilting objects, but does have a rich supply of cluster
tilting subcategories as explained below.

We obtain cluster maps on $\cD$, establish some properties, most
importantly po\-si\-ti\-vi\-ty, and show that some cluster maps cannot
be nicely extended to cluster characters.

\begin{bfhpg}
[The cluster category of type $A_{\infty}$]
\label{bfhpg:A_infty}
This category was studied in \cite{HJ}; we denote it by $\cD$.  We
still let $\cts$ denote a cluster tilting subcategory and write $\indt
= \ind \cts$.  Let us recall some features of this situation.

The results of Sections \ref{sec:CC} and \ref{sec:Cluster_maps} apply
to $\cD$ and $\cts$ because Setups \ref{set:blanket} and
\ref{set:Cluster_maps} hold by \cite[rmk.\ 1.2 and thm.\ 5.2]{HJ}.  In
particular, Theorem \ref{thm:cluster_map} gives a cluster map
\[
  \CC : \objects \cE \rightarrow \BQ(x_\obj)|_{\obj \in \indt}
\]
with $\CC(\obj) = x_\obj$ for $\obj \in \indt$.  Recall that $\cE =
\add E$ where $E$ is the set of in\-de\-com\-po\-sa\-ble objects of
$\cD$ which can be reached by finitely many mutations from $\cts$.

The AR quiver of $\cD$ is $\BZ A_{\infty}$ by \cite[rmk.\ 1.4]{HJ}.
The following coordinate system on the quiver is useful.
\[
  \xymatrix @-3.5pc @! {
    & \vdots \ar[dr] & & \vdots \ar[dr] & & \vdots \ar[dr] & & \vdots \ar[dr] & & \vdots \ar[dr] & & \vdots & \\
    \cdots \ar[dr]& & (-5,0) \ar[ur] \ar[dr] & & (-4,1) \ar[ur] \ar[dr] & & (-3,2) \ar[ur] \ar[dr] & & (-2,3) \ar[ur] \ar[dr] & & (-1,4) \ar[ur] \ar[dr] & & \cdots \\
    & (-5,-1) \ar[ur] \ar[dr] & & (-4,0) \ar[ur] \ar[dr] & & (-3,1) \ar[ur] \ar[dr] & & (-2,2) \ar[ur] \ar[dr] & & (-1,3) \ar[ur] \ar[dr] & & (0,4) \ar[ur] \ar[dr] & \\
    \cdots \ar[ur]\ar[dr]& & (-4,-1) \ar[ur] \ar[dr] & & (-3,0) \ar[ur] \ar[dr] & & (-2,1) \ar[ur] \ar[dr] & & (-1,2) \ar[ur] \ar[dr] & & (0,3) \ar[ur] \ar[dr] & & \cdots\\
    & (-4,-2) \ar[ur] & & (-3,-1) \ar[ur] & & (-2,0) \ar[ur] & & (-1,1) \ar[ur] & & (0,2) \ar[ur] & & (1,3) \ar[ur] & \\
               }
\]
This establishes a bijection between indecomposable objects and arcs
$(m,n)$ connecting non-neighbouring integers.  Hence the set $\indt$ of
indecomposable objects corresponds to a collection $\fT$ of arcs.

It was shown in \cite[thms.\ 4.3 and 4.4]{HJ} that $\cts = \add \indt$
being a cluster tilting subcategory is equivalent to $\fT$ being a
maximal collection of non-crossing arcs which is locally finite or has
a fountain.  Locally finite means that for each integer $n$, only
finitely many arcs of the form $(m,n)$ and $(n,p)$ are in $\fT$.
Having a fountain means that there is an integer $n$ such that $\fT$
contains infinitely many arcs of the form $(m,n)$ and infinitely many
of the form $(n,p)$.  The next figures show examples of these two
types of $\fT$.
\[
\vcenter{
  \xymatrix @-4.75pc @! {
       \rule{0ex}{10ex} \ar@{--}[r]
     & *{}\ar@{-}[r]
     & *{\rule{0.1ex}{0.8ex}} \ar@{-}[r] \ar@/^3.5pc/@{-}[rrrrrrr]\ar@/^4.0pc/@{-}[rrrrrrrr]
     & *{\rule{0.1ex}{0.8ex}} \ar@{-}[r] \ar@/^2.5pc/@{-}[rrrrr]\ar@/^3.0pc/@{-}[rrrrrr]
     & *{\rule{0.1ex}{0.8ex}} \ar@{-}[r] \ar@/^1.5pc/@{-}[rrr]\ar@/^2.0pc/@{-}[rrrr]
     & *{\rule{0.1ex}{0.8ex}} \ar@{-}[r] \ar@/^1.0pc/@{-}[rr]
     & *{\rule{0.1ex}{0.8ex}} \ar@{-}[r] 
     & *{\rule{0.1ex}{0.8ex}} \ar@{-}[r]
     & *{\rule{0.1ex}{0.8ex}} \ar@{-}[r]
     & *{\rule{0.1ex}{0.8ex}} \ar@{-}[r]
     & *{\rule{0.1ex}{0.8ex}} \ar@{-}[r]
     & *{}\ar@{--}[r]
     & *{}
                    }
}
\]
{}
\begin{equation}
\label{equ:fountain}
\vcenter{
  \xymatrix @-4.0pc @! {
       \rule{0ex}{7.5ex} \ar@{--}[r]
     & *{}\ar@{-}[r]
     & *{\rule{0.1ex}{0.8ex}} \ar@{-}[r] \ar@/^2.5pc/@{-}[rrrrr]
     & *{\rule{0.1ex}{0.8ex}} \ar@{-}[r] \ar@/^2.0pc/@{-}[rrrr]
     & *{\rule{0.1ex}{0.8ex}} \ar@{-}[r] \ar@/^1.5pc/@{-}[rrr]
     & *{\rule{0.1ex}{0.8ex}} \ar@{-}[r] \ar@/^1.0pc/@{-}[rr]
     & *{\rule{0.1ex}{0.8ex}} \ar@{-}[r] 
     & *{\rule{0.1ex}{0.8ex}} \ar@{-}[r] 
     & *{\rule{0.1ex}{0.8ex}} \ar@{-}[r]
     & *{\rule{0.1ex}{0.8ex}} \ar@{-}[r] \ar@/^-1.0pc/@{-}[ll]
     & *{\rule{0.1ex}{0.8ex}} \ar@{-}[r] \ar@/^-1.5pc/@{-}[lll]
     & *{\rule{0.1ex}{0.8ex}} \ar@{-}[r] \ar@/^-2.0pc/@{-}[llll]
     & *{\rule{0.1ex}{0.8ex}} \ar@{-}[r] \ar@/^-2.5pc/@{-}[lllll]
     & *{}\ar@{--}[r]
     & *{}
                    }
}
\end{equation}
In the case of the fountain, $\cts$ contains infinitely many
indecomposable objects on each of the diagonal halflines $(-,n)$ and
$(n,-)$ in the AR quiver of $\cD$.

We will say that the arc $\ft = (s,t)$ spans the arc $\fu = (u,v)$ if
$s \leq u < v < t$ or $s < u < v \leq t$.  Note that an arc does not
span itself.
\end{bfhpg}

\begin{Theorem}
\label{thm:rho_domain}
Consider the cluster map $\CC : \objects \cE \rightarrow
\BQ(x_\obj)|_{\obj \in \indt}$.  The subcategory $\cE$ is determined
as follows. 
\begin{enumerate}

  \item  If $\fT$ is locally finite, then $\cE = \cD$.

\smallskip

  \item  If $\fT$ has a fountain at $n$, then $\cE$ is $\add$ of the
  indecomposable objects which are on or below one of the halflines
  $(-,n)$ and $(n,-)$ in the AR quiver of $\cD$.

\smallskip

\[
\vcenter{
  \xymatrix @-3.5pc @! {
    &&*{(-,n)}&&&&&&&&&&&&&&&&&&&& *{(n,-)} \\
    &&&&&&&&&&&*{}&&&&&&&&&&&&&&*{}\\
    &&&&&&&&&&*{}&&&&&&&&&&*{}&&&&\\
    &&&&&&&&&&&&&&*{}&&*{}& *{} & *{} & &&&&*{}& \\
    &&&&&&&&&& &  & & &&& & & *{} &&&&*{}&*{}&*{}\\
    &&& *{E^-} &&&&&&*{}&& & & & & *{}& &  & & && *{E^+}&&&\\
    &&&&&&&&&&*{}& & & & *{}  & & & & & &*{}&&&&*{}\\ 
    &&&&&& & & &&&&&& & & &&&& \\
    *{}\ar@{--}[r]&*{}\ar@{-}[rrrrrr]&&&&&&*{} \ar@{-}[rrrrr] &&&\ar@{-}[uuuuuuuullllllll]& & *{} \ar@{.}[uuuuuuuullllllll]\ar@{.}[uuuuuuuurrrrrrrr]\ar@{-}[rr]& *{} & *{} \ar@{-}[uuuuuuuurrrrrrrr]\ar@{-}[rrrr]& & & & *{} \ar@{-}[rrrrr]&*{}&&*{}&&*{}\ar@{--}[r]&*{}\\
           }
}
\]

\end{enumerate}
\end{Theorem}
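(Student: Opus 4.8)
The plan is to work entirely inside the combinatorial model of $\cD$ recalled in Paragraph~\ref{bfhpg:A_infty}: indecomposable objects are arcs $(s,t)$; a cluster tilting subcategory $\cts$ is a maximal non-crossing collection $\fT$ that is locally finite or has a fountain \cite[thms.\ 4.3 and 4.4]{HJ}; and, via the combinatorics of \cite{HJ}, mutation at an indecomposable corresponds to a flip of the associated arc, so that each mutation alters only one arc and, by \cite[thm.\ 5.3]{IyamaYoshino}, yields again a cluster tilting --- hence maximal non-crossing, locally finite or fountain --- collection. Write $c(\fV,\fu)$ for the number of arcs of a collection $\fV$ that cross an arc $\fu$; maximality gives $c(\fV,\fu)=0\Rightarrow\fu\in\fV$. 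The argument rests on three elementary remarks: (A) finitely many mutations alter only finitely many arcs, so every collection reachable from a fountain-at-$n$ collection still has a fountain at $n$ and has no fountain anywhere else; (B) an arc $(s,t)$ with $s<n<t$ crosses $(m,n)$ for every integer $m<s$, hence crosses infinitely many arcs of any fountain-at-$n$ collection; (C) a fountain-at-$n$ collection is locally finite at every integer $\neq n$ (infinitely many arcs at some $m\neq n$ would cross infinitely many of the fountain's arcs).

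For part (i), let $\fT$ be locally finite and $\fu=(s,t)$ an arbitrary arc. Any arc crossing $\fu$ has an endpoint in the finite set $\{s+1,\dots,t-1\}$, and $\fT$ has only finitely many arcs through each integer, so $c(\fT,\fu)<\infty$. I would then induct on $c(\fT,\fu)$: if it is positive then $\fu\notin\fT$, and the standard flip argument --- traverse $\fu$ from $s$, flip the first arc of $\fT$ that $\fu$ meets; the replacing arc shares the endpoint $s$ with $\fu$ and so is not crossed by it --- produces a single mutation $\fT\to\fT'$ with $c(\fT',\fu)=c(\fT,\fu)-1$, where $\fT'$ is again locally finite since one mutation cannot create a fountain. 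After finitely many mutations $\fu$ lies in the collection reached, so every indecomposable is reachable and $\cE=\cD$.

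For part (ii), let $\fT$ have a fountain at $n$. By (A) and (B) no arc $(s,t)$ with $s<n<t$ lies in any reachable collection; in the coordinates of Paragraph~\ref{bfhpg:A_infty} these are exactly the arcs strictly above both halflines $(-,n)$ and $(n,-)$, so $E$ is contained in the set of arcs on or below one of the two halflines. For the reverse inclusion let $\fu=(s,t)$ be on or below one of them, say $t\le n$ (the case $s\ge n$ follows by the reflection $(s,t)\mapsto(2n-t,2n-s)$, which preserves crossings, sends fountain-at-$n$ collections to fountain-at-$n$ collections, and is compatible with flips). Every arc of $\fT$ crossing $\fu$ has an endpoint in $\{s+1,\dots,t-1\}\subseteq\BZ\setminus\{n\}$, so $c(\fT,\fu)<\infty$ by (C). Running the same flip induction, each intermediate collection is cluster tilting and, by (A), still has a fountain at $n$, so the flip step keeps applying and the induction terminates in a reachable collection containing $\fu$. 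Hence $E$ is exactly the set of arcs on or below one of $(-,n)$, $(n,-)$, which is the claim.

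The delicate point is the flip step in this infinite setting: one must verify that the region of the current collection through which $\fu$ first passes is a genuine triangle, that the flip of its far side is the Iyama--Yoshino mutation as described combinatorially in \cite{HJ}, and that this flip strictly decreases $c(\fT,\fu)$ and introduces no new crossing with $\fu$. For finite polygon triangulations this is classical; here it requires knowing that for the arcs $\fu$ at issue the flip takes place inside a finite sub-configuration of the (possibly infinite) collection and never involves the infinitely many arcs of a fountain, which is where the description of the complementary regions of a maximal non-crossing collection from \cite{HJ} is needed.
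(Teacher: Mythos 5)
Your argument is correct in substance, but it reaches the positive inclusions by a different route than the paper. The paper never runs a flip induction on the infinite collection: to reach an arc $\fd$ it first finds an arc $\ft \in \fT$ spanning $\fd$ (in the locally finite case this is the small lemma that every arc is spanned by some arc of $\fT$; in the fountain case one simply takes a fountain arc $(m,n)$ over $\fd$), notes that the arcs of $\fT$ spanned by $\ft$ form a triangulation of the finite polygon $p_{\ft}$, and then invokes the classical flip-connectivity of triangulations of a finite polygon, observing that these flips are mutations of the whole collection; the negative half of (ii) is the same as yours (an arc over $n$ crosses infinitely many arcs of $\fT$, while finitely many mutations alter only finitely many arcs). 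Your induction on the crossing number $c(\fT,\fu)$ trades the spanning lemma for exactly the ``delicate point'' you flag: you must know that the region entered by $\fu$ at $s$ is a genuine triangle with $s$ as a vertex (fountain collections do have non-triangular complementary regions, so this is not automatic), that the Iyama--Yoshino mutation of its far side is the quadrilateral flip, and hence that the replacing arc passes through $s$ and the crossing number drops by one; also, your closing claim that the flips ``never involve'' fountain arcs is loosely put -- in case (ii) they may remove finitely many arcs $(m,n)$, which is harmless but should be said correctly. All of this is true, but the most economical way to verify it is precisely the paper's device: the arcs you flip are all spanned by one fixed arc of the current collection (an arc of $\fT$ over $\fd$, respectively a fountain arc $(m,n)$), so your entire induction takes place inside the finite polygon $p_{\ft}$, where the classical statements apply verbatim -- at which point the two proofs essentially merge. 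Your observations (A)--(C), the finiteness of the crossing number, and the reflection trick for the case $s \geq n$ are all fine; what your approach buys is avoiding the spanning lemma in the locally finite case, at the cost of having to analyse flips of infinite configurations, which is where the HJ combinatorics you cite must be brought in explicitly.
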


\begin{proof}
In (i) we need to see that $\cE$ contains all indecomposable objects
of $\cD$, and in (ii) that it contains precisely the indecomposable
objects which are on or below one of the halflines $(-,n)$ and
$(n,-)$.

Each $\ft \in \fT$ divides $\fT$ into two parts: The arcs spanned by
$\ft$ and the rest.  The arcs in $\fT$ which are spanned by $\ft =
(s,t)$ can be viewed as diagonals of a finite polygon $p_\ft$ with
vertices $\{ s, \ldots, t \}$, and as such they form a maximal
collection of non-crossing diagonals of $p_\ft$, that is, a
triangulation of $p_\ft$.

(i)  Let $d \in \cD$ be indecomposable and let $\fd$ be the
corresponding arc.  It is not difficult to show that since $\fT$ is
locally finite, $\fd$ is spanned by an arc $\ft \in \fT$, so $\fd$ can
be viewed as a diagonal of $p_\ft$.  The arcs in $\fT$ which are
spanned by $\ft$ form a triangulation of $p_\ft$, and it is clear
that by finitely many mutations, this triangulation can be changed
into a triangulation containing $\fd$.

But then an equivalent sequence of mutations changes $\fT$ into a
maximal collection of non-crossing arcs which contains $\fd$, and
accordingly, it changes $\cts$ into a cluster tilting subcategory
containing $d$.  So $d \in \cE$.

(ii)  Let $d \in \cD$ be indecomposable and let $\fd = (p,q)$ be the
corresponding arc.  If $d$ is on or below one of the halflines $(-,n)$
and $(n,-)$, then either $p < q \leq n$ or $n \leq p < q$; assume the
former for the sake of argument.  Since $\fT$ has a fountain at $n$,
it follows that there is an $\ft = (m,n)$ in $\fT$ which spans $\fd$.
Now proceed as in part (i) to see $d \in \cE$.

If $d$ is above the halflines $(-,n)$ and $(n,-)$, then $p < n < q$.
This means that $\fd$ crosses an infinite number of arcs in $\fT$, so
there is no finite sequence of mutations which changes $\fT$ into a
maximal collection of non-crossing arcs which contains $\fd$, and
accordingly, no finite sequence of mutations which changes $\cts$ into
a cluster tilting subcategory containing $d$.  Hence $d \not\in \cE$.
\end{proof}

\begin{Proposition}
\label{pro:decoupling}
Suppose that $\fT$ has a fountain at $n$.  Consider the sets $E^-$ and
$E^+$ of indecomposable objects of $\cD$ defined by the figure in
Theorem \ref{thm:rho_domain}(ii).

If $d \in E^-$ then $\CC(d)$ can be written using variables $x_t$
with $t \in \indt \cap E^-$, and if $d \in E^+$ then $\CC(d)$ can
be written using variables $x_t$ with $t \in \indt \cap E^+$.
\end{Proposition}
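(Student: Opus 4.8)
The plan is to exploit the structure of the cluster map on $\cE$ established in Theorem~\ref{thm:cluster_map}, together with the combinatorics of arcs spanning arcs and the description of $\cE$ from Theorem~\ref{thm:rho_domain}(ii). First I would recall that, by part (ii) of that theorem, every indecomposable $d\in E^-$ corresponds to an arc $\fd=(p,q)$ with $p<q\leq n$, hence $\fd$ is spanned by some $\ft=(m,n)\in\fT$ (using the fountain at $n$), and the arcs of $\fT$ spanned by $\ft$ form a triangulation of the finite polygon $p_\ft$ with vertices $\{m,\dots,n\}$. The key observation is that every one of these spanned arcs, and more generally every arc appearing anywhere in a finite mutation sequence inside $p_\ft$, lies on or below the halfline $(-,n)$, i.e.\ corresponds to an indecomposable in $E^-$; this is a purely combinatorial fact about triangulations of a polygon all of whose vertices are $\leq n$.

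Next I would set up the reduction to a finite-type cluster category. Since $\fd$ is a diagonal of $p_\ft$, finitely many mutations inside the triangulation of $p_\ft$ produce a triangulation containing $\fd$; lifting this, we get a cluster tilting subcategory $\cts'$ reachable from $\cts$ with $d\in\ind\cts'$, and all the intermediate indecomposables stay in $E^-$. Now I would invoke property~(iii) of cluster maps (Proposition~\ref{pro:Palu}) repeatedly: each mutation step replaces an object by a new one via exchange triangles $\obj^\ast\to b\to\obj$, $\obj\to b'\to\obj^\ast$ with $b,b'\in\add(\ind\cts'\setminus\obj)$, so $\CC(\obj^\ast)=\bigl(\CC(b)+\CC(b')\bigr)/\CC(\obj)$. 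Since at every stage the objects involved lie in $E^-$ and the starting values $\CC(t)=x_t$ for $t\in\indt$ involve only variables $x_t$ with $t\in\indt\cap E^-$ (because the relevant arcs are spanned by $\ft$, hence below $(-,n)$), an induction on the number of mutation steps shows that $\CC(d)$ is a rational expression in the variables $\{x_t : t\in\indt\cap E^-\}$. The argument for $E^+$ is symmetric, replacing $p<q\leq n$ by $n\leq p<q$ and $(-,n)$ by $(n,-)$.

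The main obstacle I anticipate is making precise and rigorous the combinatorial closure claim: that a finite mutation sequence carrying the triangulation of $p_\ft$ to one containing $\fd$ can be chosen so that \emph{every} arc that ever appears — not just in $\fT$ but in all intermediate cluster tilting subcategories of $\cD$ — still corresponds to an object in $E^-$. Concretely, one must check that mutating at an arc $\fu$ of $\fT$ spanned by $\ft$ produces a replacement $\fu^\ast$ which is again a diagonal of $p_\ft$ (equivalently, has both endpoints in $\{m,\dots,n\}$, hence lies below $(-,n)$), and that the new arcs appearing in $\add(\ind\cts'\setminus\obj)$ in the exchange triangles — which by the cluster structure on $\cD$ are among the indecomposables of $\cts'$ — are likewise below the halfline. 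This should follow from the fact that inside the subpolygon $p_\ft$ the mutation combinatorics is exactly that of a finite type $A$ cluster category, where flipping a diagonal of a polygon yields another diagonal of the same polygon, but the bookkeeping tying this to the $\BZ A_\infty$ coordinate system and the halflines $(-,n)$, $(n,-)$ requires a little care. Once this is in hand, the recursive computation of $\CC(d)$ via Proposition~\ref{pro:Palu} closes the argument.
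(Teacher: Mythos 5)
Your overall strategy is the paper's: find $\ft=(m,n)\in\fT$ spanning $\fd$, perform finitely many mutations inside the polygon $p_\ft$ to reach a triangulation containing $\fd$, and apply property (iii) of cluster maps inductively along the exchange triangles. The mutated arcs $\fc_1,\dots,\fc_k=\fd$ being diagonals of $p_\ft$, hence in $E^-$, is handled exactly as you say. But the step you yourself flag as the main obstacle is a genuine gap, and the route you propose to close it does not work as stated. The issue is the middle terms $b_i,b_i'$ of the exchange triangles: these are computed in $\cD$, not in the Calabi--Yau reduction to the type-$A$ category of $p_\ft$, and their indecomposable summands need not be diagonals of $p_\ft$ at all. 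For instance, mutating an arc $(a,n)$ spanned by $\ft$ can produce exchange triangles whose middle terms involve an arc $(a',n)\in\fT$ with $a'<m$; this arc is not spanned by $\ft$, so the claim that ``everything stays inside the subpolygon $p_\ft$'' is false, and the type-$A$ flip combinatorics of $p_\ft$ alone cannot certify that these summands lie in $E^-$ (nor that the unmutated objects of $\indt$ occurring in $b_i,b_i'$ do).

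The paper closes this gap with a short categorical argument that bypasses the polygon bookkeeping entirely: $\add E^-$ equals $R^{\perp}$, where $R$ is the set of indecomposables on the half lines $(n+1,-)$ and $(n+2,-)$, and a perpendicular category is closed under extensions. Since $c_i$ and $c_{i+1}$ lie in $E^-$ and $b_i$, $b_i'$ are extensions of them, all indecomposable summands of the middle terms lie in $E^-$; those summands are then either unmutated objects of $\indt$ (so $\CC$ of them is $x_t$ with $t\in\indt\cap E^-$) or earlier $c_j$'s handled by induction, and the recursion via property (iii) closes. If you want to keep a purely combinatorial argument instead, you would have to prove directly that for an arc $\fu$ with both endpoints $\leq n$, every side of its exchange quadrilateral in any reachable triangulation again has both endpoints $\leq n$ (using that a fountain at $n$ forbids arcs $(p,q)$ with $p<n<q$), which is more delicate than the subpolygon claim you make.
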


\begin{proof}
For the sake of argument, suppose $d \in E^-$ and let $\fd$ be the
corresponding arc.

As in the proof of Theorem \ref{thm:rho_domain}(ii), there is a $\ft =
(m,n)$ in $\fT$ which spans $\fd$.  As in the proof of Theorem
\ref{thm:rho_domain}(i), the arcs in $\fT$ which are spanned by $\ft$
can be viewed as a triangulation of the polygon $p_\ft$ with vertices
$\{ m, \ldots, n \}$, and by finitely many mutations, this
triangulation can be changed into a triangulation containing $\fd$.
The arcs $\fc_1, \ldots, \fc_k = \fd$ involved in the mutations are
spanned by $\ft$.

An equivalent sequence of mutations changes $\fT$ into a maximal
collection of non-crossing arcs which contains $\fd$, and accordingly,
it changes $\cts$ into a cluster tilting subcategory containing $d$.
Since the $\fc_i$ are spanned by $\ft$, the corresponding
indecomposable objects $c_1, \ldots, c_k = d$ involved in the
mutations are in $E^-$.

However, $\add E^-$ is closed under extensions since it is equal to
$R^{\perp}$ where $R$ consists of the indecomposable objects on the
half lines $(n+1,-)$ and $(n+2,-)$.  Hence the middle terms of the
exchange triangles $c_i \rightarrow b_i \rightarrow c_{i+1}$ and
$c_{i+1} \rightarrow b_i' \rightarrow c_i$ are direct sums of
indecomposable objects from $E^-$.  Applying Property
\ref{bfhpg:cluster_maps}(iii) of cluster maps successively to these
triangles expresses $\CC(d)$ in terms of variables $x_t$ with $t \in
E^-$.
\end{proof}

\begin{Definition}
For $\obj \in \indt$, let $\ft \in \fT$ be the corresponding arc under
the bijection of Paragraph \ref{bfhpg:A_infty} and set
\[
  \fU(\ft) = \{\, \fu \in \fT \,|\,
                  \mbox{the arc $\fu$ is not spanned by the arc $\ft$} \,\}
           \subseteq \fT.
\]
The bijection described in Paragraph \ref{bfhpg:A_infty} says that
$\fU(\ft)$ corresponds to a set of indecomposable objects $\indu(\obj)
\subseteq \indt$.  Setting $\ctsu(\obj) = \add \indu(\obj)$ gives a
subcategory $\ctsu(\obj) \subseteq \cts$.  
\end{Definition}

Note that $\ft \in \fU(\ft)$ so $\obj \in \ctsu(\obj)$.  Moreover,
there are only finitely many arcs spanned by $\ft$, so $\fU(\ft)$
consists of all but finitely many of the arcs in $\fT$, and hence
$\indu(\obj)$ consists of all but finitely many of the indecomposable
objects in $\indt$.

\begin{Lemma}
\label{lem:Ng}
The subcategory $\ctsu(\obj)$ is functorially finite in $\cD$, and the
Calabi-Yau reduction $\cD_{\indu(\obj)}$ is a cluster category of
Dynkin type $A_m$, where $m$ is the number of arcs in $\fT$ spanned by
$\ft$.
\end{Lemma}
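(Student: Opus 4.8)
The plan is to treat the two assertions separately. Functorial finiteness of $\ctsu(\obj)$ is immediate: as noted just before the lemma, only finitely many arcs are spanned by $\ft$, so $\indt\setminus\indu(\obj)$ is finite and Lemma~\ref{lem: functorially finite} applies with $R=\indu(\obj)$.

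For the identification of the Calabi--Yau reduction I would first pin down the indecomposable objects of $\cD_{\indu(\obj)}={}^{\perp}(\shift\ctsu(\obj))/[\ctsu(\obj)]$. Recall from~\cite{HJ} that for indecomposables $c,c'$ of $\cD$ the space $\cD(c,\shift c')\cong\Ext^{1}_{\cD}(c,c')$ is nonzero exactly when the corresponding arcs cross; hence an indecomposable $c$ lies in ${}^{\perp}(\shift\ctsu(\obj))$ if and only if its arc crosses no arc of $\fU(\ft)$. Since $\fT$ is a maximal non-crossing collection containing $\ft=(s,t)$, the set $\fU(\ft)$ consists of $\ft$ together with those arcs of $\fT$ lying outside the polygon $p_\ft$ spanned by $\ft$. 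An elementary discussion of the crossing relation (exterior arcs cross neither $\ft$ nor any diagonal of $p_\ft$) combined with maximality of $\fT$ in the exterior region shows that an arc crossing none of $\fU(\ft)$ is either a diagonal of $p_\ft$ or itself a member of $\fU(\ft)$. Therefore the indecomposables of $\cD_{\indu(\obj)}$ are precisely the images of the arcs spanned by $\ft$, i.e.\ the diagonals of $p_\ft$; since, as recorded in the proof of Theorem~\ref{thm:rho_domain}, the $m$ arcs of $\fT$ spanned by $\ft$ form a triangulation of $p_\ft$, the polygon $p_\ft$ has $m+3$ vertices.

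Next I would transport the cluster structure through the bijection of Paragraph~\ref{bfhpg:IY}. The cluster tilting subcategory $\cts$ of $\cD$ contains $\ctsu(\obj)$, so it passes to the cluster tilting subcategory $\cts/\ctsu(\obj)$ of $\cD_{\indu(\obj)}$; this has exactly $m$ indecomposable objects, namely the diagonals of $p_\ft$ lying in $\fT$, and these form a triangulation of the $(m+3)$-gon $p_\ft$. Thus $\cD_{\indu(\obj)}$ is an algebraic (being a Calabi--Yau reduction of the algebraic category $\cD$) Hom-finite Krull--Schmidt $2$-Calabi--Yau category possessing a cluster tilting object whose quiver is the quiver of a triangulation of an $(m+3)$-gon, hence a cluster-tilted algebra quiver of type $A_m$. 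Using compatibility of the reduction with mutation (Paragraph~\ref{bfhpg:IY}) and the fact that any triangulation of a polygon can be flipped in finitely many steps to the fan triangulation at a fixed vertex, I can mutate this object to a cluster tilting object whose quiver is a linear orientation of $A_m$; the Keller--Reiten rigidity theorem for algebraic $2$-Calabi--Yau categories admitting an acyclic cluster tilting object then identifies $\cD_{\indu(\obj)}$ with the cluster category of type $A_m$.

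The main obstacle is exactly this last step: the combinatorics yields only a bijection of indecomposables together with a matching of the Ext/crossing pattern (and of the AR quiver), and upgrading that to a triangle equivalence $\cD_{\indu(\obj)}\simeq\cC_{A_m}$ needs genuine external input — either the Keller--Reiten uniqueness result sketched above, or a direct comparison of $\cD_{\indu(\obj)}$ with the Caldero--Chapoton--Schiffler polygon model of the type $A_m$ cluster category, checking morphism spaces, compositions and distinguished triangles. A secondary technical point is that passage to the reduction could a priori enlarge the quiver of $\cts/\ctsu(\obj)$ by arrows arising from $\ctsu(\obj)$-approximations; one should confirm, again via the geometric model, that $Q_{\cts/\ctsu(\obj)}$ is precisely the triangulation quiver of $p_\ft$.
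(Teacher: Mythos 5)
Your proposal is correct in substance and rests on the same two external inputs as the paper's proof: functorial finiteness via Lemma~\ref{lem: functorially finite} (identical first step), algebraicity of the reduction, and the Keller--Reiten recognition theorem applied to a cluster tilting object of $\cD_{\indu(\obj)}$ with linearly oriented $A_m$ (endomorphism) quiver. The difference is how that object is produced. The paper never passes through $\cts/\ctsu(\obj)$ and its quiver: it directly forms $\fV = \fU(\ft)\cup\{\text{fan of arcs spanned by }\ft\}$, a maximal non-crossing collection containing $\fU(\ft)$, hence via the Iyama--Yoshino bijection of Paragraph~\ref{bfhpg:IY} a cluster tilting subcategory $\cV\supseteq\ctsu(\obj)$ of $\cD$ whose image in $\cD_{\indu(\obj)}$ is a cluster tilting object $V$; the $m$ indecomposable summands of $V$ sit on a diagonal line segment below $\obj$ in the AR quiver of $\cD$, and from this explicit position one checks $\End(V)\cong kA_m$ directly. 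This choice sidesteps both points you flag as obstacles: no identification of $Q_{\cts/\ctsu(\obj)}$ with the triangulation quiver of $p_\ft$ is needed, and no mutation-inside-the-reduction step (your flip argument is legitimate, using compatibility of reduction with mutation and the mutation--flip dictionary of \cite{HJ}, but it is an extra layer). Your classification of the indecomposables of $\cD_{\indu(\obj)}$ as the diagonals of $p_\ft$ is correct --- it is precisely equality~\eqref{equ:P} established later in the proof of Lemma~\ref{lem:t} --- but it is not needed for this lemma. Two cautions: algebraicity of a Calabi--Yau reduction is not automatic, and should be credited to \cite[Prop 4.3]{AmiotOppermann} as the paper does; and whichever route you take, a Hom-space computation in the quotient category must be carried out somewhere --- the paper's fan reduces it to an easy check in $\cD$ (morphisms along the segment, modulo those factoring through $\ctsu(\obj)$), which is exactly the verification your version defers to the geometric model.
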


\begin{proof}
Since $\indu(\obj)$ consists of all but finitely many of the
indecomposable objects in $\indt$, it follows from Lemma \ref{lem:
functorially finite} that $\ctsu(\obj)$ is functorially finite in
$\cD$.

The Calabi-Yau reduction $\cD_{\indu(\obj)} = {}^{\perp}(\Sigma
\ctsu(\obj)) / [\ctsu(\obj)]$ is a triangulated $\Hom$-finite
$2$-Calabi-Yau category. Since a Calabi--Yau reduction
of a $2$-Calabi--Yau algebraic category is again algebraic
\cite[Prop 4.3]{AmiotOppermann}, the category $\cD_{\indu(\obj)}$
is algebraic. To see that it is a cluster category of type
$A_m$, by \cite[secs.\ 2 and 4]{KellerReiten} it is sufficient to show
that it has a cluster tilting object $V$ with $\End(V) \cong kA_m$.

By the theory of Calabi-Yau reductions, as explained in Paragraph
\ref{bfhpg:IY}, the cluster tilting subcategories of
$\cD_{\indu(\obj)}$ have the form $\cV/\ctsu(\obj)$ where $\cV$ is a
cluster tilting subcategory of $\cD$ with $\ctsu(\obj) \subseteq \cV$.
Such a $\cV$ corresponds to a maximal collection $\fV$ of non-crossing
arcs with $\fU(\ft) \subseteq \fV$.  Let us construct a set $\fV$ by
starting with $\fU(\ft)$ and adding the following arcs spanned by
$\ft$.
\[
  \def\labelstyle{\textstyle}
  \xymatrix @-4.2pc @! {
       \rule{0ex}{9.5ex} \ar@{--}[r]
     & *{}\ar@{-}[r]
     & *{\rule{0.1ex}{0.8ex}} \ar@{-}[r] \ar@/^1.0pc/@{-}[rr] \ar@/^1.5pc/@{-}[rrr] \ar@/^2.0pc/@{-}^>>>>{\hspace{5ex}\cdots}[rrrr]\ar@/^3.5pc/@{-}[rrrrrrrr]\ar@/^4.0pc/@{.}^{\ft}[rrrrrrrrr]
     & *{\rule{0.1ex}{0.8ex}} \ar@{-}[r] 
     & *{\rule{0.1ex}{0.8ex}} \ar@{-}[r]
     & *{\rule{0.1ex}{0.8ex}} \ar@{-}[r] 
     & *{\rule{0.1ex}{0.8ex}} \ar@{-}[r]
     & {} \ar@{--}[r] 
     & {} \ar@{--}[r]
     & {} \ar@{-}[r]
     & *{\rule{0.1ex}{0.8ex}} \ar@{-}[r] 
     & *{\rule{0.1ex}{0.8ex}} \ar@{-}[r]
     & *{}\ar@{--}[r]
     & *{}
                    }
\]
Then the indecomposable objects of $\cV$ are those of $\ctsu(\obj)$
along with finitely many indecomposable objects placed as follows on a
line segment below $\obj$ in the AR quiver of $\cD$.
\[
  \xymatrix @-1.55pc @! {
    &&&&&&&& *{\obj} &&&& \\
    &&&&&&& *{\circ} &&&&& \\
    &&&&&& *{\adots} &&&&&& \\
    &&&&&*{\circ}&&&&&&& \\
    &&&&*{\circ}&&&&&&&& \\
    *{}\ar@{--}[r]&*{}\ar@{-}[rr]&&*{\circ}\ar@{-}[rrrrrrr]&&&&&*{}&&*{}\ar@{--}[r]&*{} \\
           }
\]
The cluster tilting subcategory $\cV/\ctsu(\obj)$ of
$\cD_{\indu(\obj)}$ has only finitely many indecomposable objects
given by the indecomposable objects marked by $\circ$ in the figure.
Their direct sum gives a cluster tilting object $V$ of
$\cD_{\indu(\obj)}$, and if there are $m$ of them, then it is not hard
to show that $\End(V) \cong kA_m$.  Here $m$ is the number of arcs in
a triangulation of the polygon $p_{\ft}$, that is, the number of arcs
in $\fT$ spanned by $\ft$.
\end{proof}

\begin{Remark}
The authors believe that all the Calabi--Yau reductions
of $\cD$ are products of cluster categories of type $A$.
If one had a detailed description of rigid subcategories of the
ca\-te\-go\-ry $\cD$, then this could be proven
by using Lemma~\ref{lem: functorially
finite} and the main theorem of~\cite{KellerReiten}. Another possible
approach might be to use the geometric description of the cluster
category of type $A_m$ due to~\cite{CCS}.
\end{Remark}

\begin{Lemma}
\label{lem:t}
Let $d \in \cE$ be an indecomposable object.  There exists $\obj \in
\indt$ such that $d \in {}^{\perp}(\Sigma \ctsu(\obj)) \setminus
\ctsu(\obj)$ and $\CC(d) \in \BQ(x_{\obj'})|_{\obj' \in \indt
\setminus \indu(\obj)}$.
\end{Lemma}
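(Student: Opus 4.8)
The plan is to exhibit the object $\obj \in \indt$ directly from the combinatorics of Paragraph~\ref{bfhpg:A_infty}. Given the indecomposable $d \in \cE$, let $\fd$ be the corresponding arc. By Theorem~\ref{thm:rho_domain}, $d$ either lies in a locally finite situation or, in the fountain case, is on or below one of the halflines; in either case, the argument in the proof of that theorem shows that $\fd$ is spanned by some arc $\ft \in \fT$. (In the locally finite case this is the first observation in the proof of Theorem~\ref{thm:rho_domain}(i); in the fountain case it is the explicit $\ft = (m,n)$ produced in the proof of part~(ii).) I would fix such a $\ft$ and take $\obj \in \indt$ to be the corresponding indecomposable object, with $\ctsu(\obj) = \add \indu(\obj)$ as in the Definition preceding Lemma~\ref{lem:Ng}.

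\textbf{First I would check the membership condition $d \in {}^{\perp}(\Sigma \ctsu(\obj)) \setminus \ctsu(\obj)$.} Since $\fd$ is spanned by $\ft$ (and no arc spans itself), $\fd \notin \fU(\ft)$, so $d \notin \ctsu(\obj)$. For the orthogonality $d \in {}^{\perp}(\Sigma \ctsu(\obj))$, I would use the description of $\Ext^1$ in $\cD$ in terms of crossing of arcs from~\cite{HJ}: an indecomposable object $u$ with arc $\fu$ satisfies $\cD(d, \Sigma u) \neq 0$ precisely when $\fd$ and $\fu$ cross. If $\fu \in \fU(\ft)$, then $\fu$ is not spanned by $\ft$; on the other hand $\fd$ \emph{is} spanned by $\ft$, hence $\fd$ lies ``inside'' the polygon $p_\ft$ while $\fu$ does not, so $\fd$ and $\fu$ cannot cross. (The one delicate point is arcs $\fu$ sharing an endpoint with $\ft$, but sharing an endpoint is not crossing, so these cause no trouble.) Therefore $\cD(d, \Sigma u) = 0$ for every indecomposable $u \in \ctsu(\obj)$, giving $d \in {}^{\perp}(\Sigma \ctsu(\obj))$.

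\textbf{Next I would establish that $\CC(d)$ involves only the variables $x_{\obj'}$ for $\obj' \in \indt \setminus \indu(\obj)$.} This is where the work of the previous sections pays off. By the proof of Theorem~\ref{thm:rho_domain}, $d$ is reached from $\cts$ by a finite sequence of mutations; moreover, the arcs $\fc_1, \dots, \fc_k = \fd$ appearing along the way are all spanned by $\ft$ — this is exactly the point isolated in the proof of Proposition~\ref{pro:decoupling}. Since being spanned by $\ft$ means lying among the finitely many arcs of $p_\ft$, all these $\fc_i$ correspond to objects outside $\ctsu(\obj)$, i.e.\ outside $\indu(\obj)$. Furthermore, $\add\{\,c \mid \fc \text{ spanned by } \ft\,\}^{\perp}$-type arguments as in Proposition~\ref{pro:decoupling} show that the middle terms $b_i, b_i'$ of the exchange triangles $c_i \to b_i \to c_{i+1}$, $c_{i+1} \to b_i' \to c_i$ are again direct sums of such objects. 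Applying Property~\ref{bfhpg:cluster_maps}(iii) of the cluster map $\CC$ successively along this chain, starting from $\CC$ of the initial objects (which lie in $\indt \setminus \indu(\obj)$ by the same spanning argument) and which equal the corresponding $x$-variables, expresses $\CC(d)$ as a rational function in the $x_{\obj'}$ with $\obj' \in \indt \setminus \indu(\obj)$ only. Hence $\CC(d) \in \BQ(x_{\obj'})|_{\obj' \in \indt \setminus \indu(\obj)}$.

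\textbf{The main obstacle} is the second half: making rigorous that the entire mutation history used to reach $d$ can be taken to stay within the polygon $p_\ft$, and that the exchange triangles' middle terms do too. The key structural input is that $\add$ of the objects whose arcs are spanned by $\ft$ is closed under extensions (because it is a right-perpendicular subcategory, as in Proposition~\ref{pro:decoupling}), which confines all $b_i, b_i'$ to this finite set. Once that closure statement is in hand, the successive application of the cluster-map relations is purely formal. I expect the write-up to essentially quote the proofs of Theorem~\ref{thm:rho_domain} and Proposition~\ref{pro:decoupling} rather than reprove them.
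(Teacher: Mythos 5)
Your verification of the first condition is fine: for any $\ft \in \fT$ spanning $\fd$ one does get $d \in {}^{\perp}(\Sigma \ctsu(\obj)) \setminus \ctsu(\obj)$, and this matches the paper's identification of $\fP(\ft)$ with the arcs spanned by $\ft$. The gap is in the second half. Choosing $\ft$ merely to span $\fd$ does not force $\CC(d)$ to lie in $\BQ(x_{\obj'})_{\obj' \in \indt \setminus \indu(\obj)}$, because the allowed variables correspond exactly to the arcs of $\fT$ spanned by $\ft$, and $\ft$ itself is \emph{not} among them ($\ft \in \fU(\ft)$, so $\obj \in \indu(\obj)$). Concretely, let $\fT$ be the fountain at $1$ of figure \eqref{equ:fountain}, let $\fd = (2,4)$ (so $d \in E^+ \subseteq \cE$) and $\ft = (1,4)$, which spans $\fd$. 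The only arc of $\fT$ spanned by $\ft$ is $(1,3)$, and mutating at $(1,3)$ gives the exchange relation $x_{(1,3)}\CC(d) = 1 + x_{(1,4)}$, so $\CC(d) = (1+x_{(1,4)})/x_{(1,3)} \notin \BQ(x_{(1,3)})$: the forbidden variable $x_\obj$ genuinely occurs. The same example refutes the structural claim you lean on: the middle terms of the exchange triangles for diagonals inside the polygon $p_\ft$ can have $\obj$ itself as a summand (here the non-split triangle linking the objects of $(1,3)$ and $(2,4)$ has middle term the object of $(1,4)$), so $\add$ of the objects whose arcs are spanned by $\ft$ is \emph{not} closed under extensions. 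It corresponds to ${}^{\perp}(\Sigma \ctsu(\obj)) \setminus \ctsu(\obj)$, which, unlike the genuine perpendicular category $\add E^- = R^{\perp}$ used in Proposition \ref{pro:decoupling}, is not extension-closed precisely because $\ctsu(\obj)$ has been removed.

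The missing idea is to choose $\ft$ \emph{after} recording which variables occur. Since $\funct d$ has finite length, $\CC(d)$ is a Laurent polynomial involving only finitely many variables $x_{\obj'}$, $\obj' \in \indt'$ with $\indt' \subseteq \indt$ finite; one must then find $\ft \in \fT$ spanning $\fd$ \emph{and} every arc of $\fT'$, so that $\indt' \subseteq \indt \setminus \indu(\obj)$. This is where the two cases enter: if $\fT$ is locally finite, any finite collection of arcs is spanned by some $\ft \in \fT$; if $\fT$ has a fountain at $n$, one first invokes Proposition \ref{pro:decoupling} to arrange $\indt' \subseteq E^-$ (or $E^+$) together with $d$, and then the fountain provides an arc $\ft = (m,n)$ spanning $\fd$ and all of $\fT'$. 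Without this enlargement of $\ft$ beyond a mere spanning arc of $\fd$, the statement you are proving is simply false, as the example above shows.
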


\begin{proof}
Let $\indt'$ be a finite subset of $\indt$ such that $\CC(d)$
can be written in terms of the $x_{\obj'}$ for $\obj' \in \indt'$.  We
must show that there is a $t \in \indt$ such that $d \in
{}^{\perp}(\Sigma \ctsu(\obj)) \setminus \ctsu(\obj)$ and $\indt'
\subseteq \indt \setminus \indu(\obj)$.  Since $\ctsu(t) = \add
\indu(t)$, it is enough to show that there is a $t \in \indt$ such
that $d$ and $\indt'$ are both contained in ${}^{\perp}(\Sigma
\ctsu(\obj)) \setminus \ctsu(\obj)$.

Let us rephrase this in terms of arcs.  Let $d$ correspond to the arc
$\fd$ and $\indt'$ to the finite collection $\fT' \subset \fT$.  We
must show that there is a $\ft \in \fT$ such that that $\fd$ and
$\fT'$ are both contained in the set $\fP(\ft)$ of arcs corresponding
to the indecomposable objects of ${}^{\perp}(\Sigma \ctsu(\obj))
\setminus \ctsu(\obj)$.  Let us show that
\begin{equation}
\label{equ:P}
  \fP(\ft) = \{\, \mbox{$\fp$ is an arc}
               \,|\, \mbox{$\fp$ is spanned by $\ft$} \,\}.
\end{equation}
Namely, \cite[lem.\ 3.6]{HJ} shows that the indecomposable objects in
${}^{\perp}(\Sigma \ctsu(\obj))$ correspond to the arcs which do not
cross any arc in $\fU(\ft)$.  So the indecomposable objects in
${}^{\perp}(\Sigma \ctsu(\obj)) \setminus \ctsu(\obj)$ correspond to
the arcs which do not cross any arc in $\fU(\ft)$ and are outside
$\fU(\ft)$.  Combining this with the definition of $\fU(\ft)$ shows
equality \eqref{equ:P}.

Now, if $\fT$ is locally finite, then it is easy to show that, given
any finite collection $\fQ$ of arcs, there is a $\ft \in \fT$ such
that $\fQ \subseteq \fP(\ft)$.

If $\fT$ has a fountain at $n$, then Theorem \ref{thm:rho_domain}(ii)
implies that the indecomposable object $d$ is in either $E^-$ or
$E^+$.  For the sake of argument, suppose $d \in E^-$.  Then $\indt'$
can be chosen as a subset of $E^-$ by Proposition
\ref{pro:decoupling}.  But then $\fd$ and each $\ft' \in \fT'$ is an
arc of the form $(p,q)$ with $p < q \leq n$.  Since $\fT$ has a
fountain at $n$, it follows that there is an $\ft = (m,n)$ in $\fT$
which spans $\fd$ and each $\ft'$, so $\fd \in \fP(\ft)$ and $\fT'
\subseteq \fP(\ft)$ as desired.
\end{proof}

\begin{Theorem}
\label{thm:rho_properties}
The cluster map $\CC^{\cts} : \objects \cE \rightarrow
\BQ(x_\obj)_{\obj \in \indt}$ enjoys the following properties.
\begin{enumerate}

  \item $\CC^{\cts}(\obj) = x_\obj$ for $\obj \in \indt$.

\smallskip

  \item If $d \in \cE$, then $\CC^{\cts}(d)$ is a non-zero Laurent
    polynomial. 

\smallskip

  \item In each such Laurent polynomial, the coefficients in the
  numerator are positive integers.

\end{enumerate}
\end{Theorem}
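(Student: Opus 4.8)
The plan is to prove the three claims in order, using the reduction to cluster categories of type $A_m$ provided by Lemmas \ref{lem:Ng} and \ref{lem:t} together with Theorem \ref{thm:CC_reduction}, and then invoking the known positivity results for the finite type $A$ case.

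Claim (i) is already part of Theorem \ref{thm:cluster_map}, so there is nothing new to prove. For claims (ii) and (iii), let $d \in \cE$ be indecomposable (the general case follows since $\CC^\cts$ is multiplicative on direct sums and a product of non-zero Laurent polynomials with positive numerator coefficients has the same property). First I would invoke Lemma \ref{lem:t} to obtain an object $\obj \in \indt$ such that $d \in {}^\perp(\shift\ctsu(\obj)) \setminus \ctsu(\obj)$ and $\CC^\cts(d) \in \BQ(x_{\obj'})|_{\obj' \in \indt \setminus \indu(\obj)}$. By Lemma \ref{lem:Ng}, the Calabi--Yau reduction $\cD_{\indu(\obj)}$ is a cluster category of Dynkin type $A_m$, and it has a cluster tilting object; in particular the cluster tilting subcategories reachable from $\cts/\ctsu(\obj)$ form a cluster structure in $\cD_{\indu(\obj)}$ (a cluster category of finite type has this property), so Setup \ref{set:CYred} is satisfied with $\cat = \cD$ and $\ctsu = \ctsu(\obj)$.

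The object $d$, viewed in $\catu = \cD_{\indu(\obj)}$, lies in $\cF$: it is reachable from $\cts/\ctsu(\obj)$ because $d$ is reachable from $\cts$ in $\cD$ by mutations which, by the proof of Lemma \ref{lem:t} (the arcs involved are all spanned by $\ft$), take place inside the sub-polygon and hence descend to mutations in the reduction. Then Theorem \ref{thm:CC_reduction} gives $\CC^{\cts/\ctsu(\obj)}(d) = \overline{\CC}^\cts(d) = \CC^\cts(d)|_{x_{\obju}=1}$; but since $\CC^\cts(d)$ already only involves the variables $x_{\obj'}$ for $\obj' \in \indt \setminus \indu(\obj)$, the specialization does nothing, so $\CC^\cts(d) = \CC^{\cts/\ctsu(\obj)}(d)$. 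Now $\cts/\ctsu(\obj)$ is a cluster tilting subcategory of $\cD_{\indu(\obj)}$ with finitely many indecomposables, i.e.\ (up to direct sum) a cluster tilting \emph{object}, so $\CC^{\cts/\ctsu(\obj)}$ is the classical Caldero--Chapoton cluster character of \cite{Palu} on a cluster category of type $A_m$. For such categories $\CC^{\cts/\ctsu(\obj)}(d)$ is a non-zero Laurent polynomial whose numerator has positive integer coefficients — this is the Laurent phenomenon of \cite{FZ} together with the positivity theorem of \cite{CK2} (Caldero--Keller) in type $A$ (alternatively one may cite \cite{CC} directly for type $A_n$). Transporting this back along the equality $\CC^\cts(d) = \CC^{\cts/\ctsu(\obj)}(d)$ yields both (ii) and (iii).

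The main obstacle is verifying that the object $d$ really becomes a \emph{reachable} indecomposable in the finite reduction, i.e.\ that $d \in \cF$, with the mutation path staying inside $\ctsu(\obj)^\perp$ throughout; this is where one must be careful to extract the relevant combinatorial statement from the arc picture in the proofs of Theorem \ref{thm:rho_domain} and Lemma \ref{lem:t}, namely that a mutation sequence realizing $d$ can be chosen to use only arcs spanned by $\ft$, so that it projects to a mutation sequence in $\cD_{\indu(\obj)}$ reaching $d$ from $\cts/\ctsu(\obj)$. Once that is in hand, everything else is an application of Theorem \ref{thm:CC_reduction} and the classical type-$A$ positivity results.
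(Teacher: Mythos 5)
Your proposal is correct and follows essentially the same route as the paper: reduce to an indecomposable $d$, use Lemma \ref{lem:t} and Lemma \ref{lem:Ng} to pass to the type $A_m$ Calabi--Yau reduction, identify $\CC^{\cts}(d)$ with $\CC^{\cts/\ctsu(\obj)}(d)$ via Theorem \ref{thm:CC_reduction}, and conclude by the known positivity in finite type (the paper cites \cite{CK} or \cite{Schiffler Thomas} rather than \cite{CC}/\cite{CK2}, an immaterial difference). Your extra care in checking that $d$ is reachable in the reduction (via a mutation sequence of arcs spanned by $\ft$, compatible with the reduction) only fills in a step the paper asserts without detail, so no change of method is involved.
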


\begin{proof}
Part (i) is already in Theorem \ref{thm:cluster_map}.  In (ii) it is
clear from the definition in Paragraph \ref{bfhpg:CC} that
$\CC^{\cts}(d)$ is a Laurent polynomial.

For the rest of the proof, we can assume that $d \in \cE$ is
indecomposable.  Use Lemma \ref{lem:t} to pick $\obj \in \indt$ such
that $d \in {}^{\perp}(\Sigma \ctsu(\obj)) \setminus \ctsu(\obj)$ and
\begin{equation}
\label{equ:CC_in_subfield}
  \CC^{\cts}(d) \in \BQ(x_{\obj'})|_{\obj' \in \indt \setminus \indu(\obj)}.
\end{equation}
The Calabi-Yau reduction $\cD_{\indu(\obj)}$ is a cluster category of
type $A_m$ by Lemma \ref{lem:Ng}, so viewed in $\cD_{\indu(\obj)}$ the
object $d$ is reachable from the cluster tilting subcategory $\cts /
\ctsu(t)$.  Hence Theorem \ref{thm:CC_reduction} gives
$\CC^{\cts}(d)|_{x_{\obju}=1 \:\mbox{\tiny for}\: \obju \in
  \indu(\obj)} = \CC^{\cts/\ctsu(\obj)}(d)$, and by equation
\eqref{equ:CC_in_subfield} this implies that $\CC^{\cts}(d) =
\CC^{\cts/\ctsu(\obj)}(d)$.  But it follows from \cite[thm.\ 4]{Palu}
or Theorem \ref{thm:cluster_map} that $\CC^{\cts/\ctsu(\obj)}$ is a
cluster map on $\cD_{\indu(t)}$, and the remaining part of (ii) and
(iii) now follow from \cite[thm.\ 3]{CK} or
\cite[cor. 3.6]{Schiffler Thomas}.
\end{proof}

\begin{Remark}
By Theorem \ref{thm:rho_domain}(ii), when $\cts$ corresponds to a
fountain, $\cE$ is not all of $\cD$.  It is reasonable to ask in
this case whether $\CC$ can be extended to all of $\cD$.  Let us show
that the answer is {\em no} in the case of the simplest fountain given
by figure \eqref{equ:fountain} of Paragraph \ref{bfhpg:A_infty}.
\end{Remark}

\begin{Theorem}
\label{thm:nogo}
Suppose that $\cts$ corresponds to the fountain \eqref{equ:fountain}.
Then there is no cluster map $\CC : \objects \cD \rightarrow
\BQ(x_\obj)_{\obj \in \indt}$ which satisfies the conditions of
Theorem \ref{thm:rho_properties}.
\end{Theorem}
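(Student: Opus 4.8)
The idea is to suppose, for contradiction, that such a cluster map $\CC : \objects \cD \rightarrow \BQ(x_\obj)_{\obj \in \indt}$ exists, and to derive a contradiction by exhibiting an indecomposable object $d \in \cD$ that is \emph{not} in $\cE$ but whose value $\CC(d)$ is forced, via the exchange-triangle axiom \ref{bfhpg:cluster_maps}(iii), to satisfy two incompatible equations. For the simplest fountain \eqref{equ:fountain}, $\fT$ consists of all "short" arcs $(i,i+2)$ together with the arcs $(0,n)$ for $n \ge 2$ and $(m,0)$ for $m \le -2$; the fountain is at $0$. By Theorem \ref{thm:rho_domain}(ii), $\cE$ consists of the arcs lying on or below the two halflines through $0$, i.e.\ the arcs $(p,q)$ with $q \le 0$ or $p \ge 0$. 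The first object \emph{not} in $\cE$ is a small arc straddling $0$, the natural candidate being $\fd = (-1,1)$ (equivalently $(-2,2)$ — one picks whichever makes the exchange combinatorics cleanest).

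\textbf{Step 1: locate the exchange relations forced on $\CC(d)$.} The crossing arcs of $\fd$ in $\fT$ are the arcs $(0,n)$ with $n \ge 2$ and the arcs $(m,0)$ with $m \le -2$; so $\Ext^1(d, \obj) \ne 0$ for infinitely many $\obj \in \indt$. By the Iyama--Yoshino mutation theory one computes, for each such $\obj$ with $\dim_k \Ext^1_{\cD}(d,\obj) = 1$, the two exchange triangles $d \to b \to \obj$, $\obj \to b' \to d$ in $\cD$; here one reads off $b, b'$ combinatorially by "rotating" the arc $\fd$ across $\obj$, using the arc description of triangles in $\cD$ from \cite{HJ}. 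Crucially, for a well-chosen $\obj^- = (m,0) \in \fT$ (far to the left) the exchange partners $b,b'$ lie in $E^-$, while for a well-chosen $\obj^+ = (0,n) \in \fT$ (far to the right) the partners lie in $E^+$. Axiom \ref{bfhpg:cluster_maps}(iii) then gives
\[
  \CC(d)\,\CC(\obj^-) = \CC(b^-) + \CC(b'^-), \qquad
  \CC(d)\,\CC(\obj^+) = \CC(b^+) + \CC(b'^+),
\]
and since all objects other than $d$ appearing here lie in $\cE$, the right-hand sides and the factors $\CC(\obj^\pm) = x_{\obj^\pm}$ are already determined by the known cluster map on $\cE$.

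\textbf{Step 2: derive the contradiction from positivity/variable support.} Solving the first relation, $\CC(d) = \bigl(\CC(b^-) + \CC(b'^-)\bigr)/x_{\obj^-}$, expresses $\CC(d)$ as a Laurent polynomial supported (by Proposition \ref{pro:decoupling}) only in the variables $x_t$ with $t \in \indt \cap E^-$, together with $x_{\obj^-}^{-1}$; in particular $\CC(d)$ involves \emph{no} variable $x_t$ with $t$ on the strictly-positive halfline. Symmetrically, the second relation forces $\CC(d)$ to involve no variable $x_t$ with $t$ on the strictly-negative halfline. Hence $\CC(d)$ is a Laurent polynomial in only the finitely many "short" arc variables near $0$. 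But now a direct computation — expanding, say, $\CC(b^-)$ and $\CC(b'^-)$ explicitly (these are Caldero--Chapoton values of known small objects in $E^-$, computable via Theorem \ref{thm:rho_properties} and the reduction Theorem \ref{thm:CC_reduction} to a type-$A_m$ cluster category) — shows that $x_{\obj^-}$ genuinely divides only part of $\CC(b^-)+\CC(b'^-)$, so the quotient is not a Laurent polynomial in the remaining variables, or, more robustly, that the two expressions for $\CC(d)$ coming from $\obj^-$ and from $\obj^+$ are genuinely different Laurent polynomials. Either way one gets $\CC(d) \ne \CC(d)$, the desired contradiction.

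\textbf{Main obstacle.} The delicate part is Step 1: writing down the exchange triangles in $\cD$ for the straddling arc $d$ against the infinitely many members of the fountain, and verifying that one can simultaneously choose $\obj^-$ deep in $E^-$ and $\obj^+$ deep in $E^+$ whose exchange partners stay within $E^-$ and $E^+$ respectively (so that Proposition \ref{pro:decoupling} applies cleanly to the right-hand sides). This is a purely combinatorial check with the arc model of \cite{HJ}, but it requires care because $\dim_k\Ext^1_{\cD}(d,\obj)$ need not always equal $1$, so one must first isolate those $\obj$ in the fountain for which the hypothesis of axiom \ref{bfhpg:cluster_maps}(iii) is met. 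Once the two one-dimensional-$\Ext^1$ exchange relations are pinned down and the decoupling of variables is established, the contradiction in Step 2 is a short explicit computation in the type-$A$ cluster algebra.
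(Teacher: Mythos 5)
There is a genuine gap, and in fact the approach as you set it up cannot be repaired. Your Step~1 claims that for a well-chosen fountain arc $\obj^-=(m,0)$ (resp.\ $\obj^+=(0,n)$) the exchange partners $b,b'$ of the straddling arc $\fd=(-1,1)$ lie in $E^-$ (resp.\ $E^+$). This is false. In the arc model, if $\fd=(p,q)$ straddles the fountain ($p<0<q$) and $\obj=(r,s)$ crosses it, say $p<r<q<s$, then the middle terms of the two exchange triangles are $(p,s)\oplus(r,q)$ and $(p,r)\oplus(q,s)$, and the arc $(p,s)$ again satisfies $p<0<s$, i.e.\ it straddles the fountain. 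Concretely, for $\fd=(-1,1)$ and $\obj^-=(m,0)$ the middle terms are $(m,1)$ and $(m,-1)$, and $(m,1)$ is above both halflines, hence outside $\cE$ by Theorem \ref{thm:rho_domain}(ii). So the right-hand sides of your exchange relations are \emph{not} ``already determined by the known cluster map on $\cE$'': each relation for a straddling object introduces the $\CC$-value of another straddling object as a new unknown, Proposition \ref{pro:decoupling} does not apply to it, and the system never closes after finitely many steps. Consequently Step~2 collapses: you cannot conclude that $\CC(d)$ involves only variables from $E^-$, and the ``direct computation'' showing the two candidate expressions for $\CC(d)$ disagree is neither performed nor plausible as stated.

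The paper's proof turns precisely this obstruction into the contradiction. It takes the infinite family of straddling objects $u_i$ on the halfline just above $(-,n)$ and the fountain objects $\obj_i$ on $(-,n)$; the AR triangle $\obj_i\to\obj_{i-1}\oplus u_{i+1}\to u_i$ together with the non-split triangle $u_i\to 0\to\obj_i$ gives $\CC(\obj_i)\CC(u_i)=\CC(\obj_{i-1})\CC(u_{i+1})+1$, so each relation involves the next unknown $u_{i+1}$. Specializing all $x_\obj$ to $1$ (using that $\CC(\obj_i)=x_{\obj_i}$) and invoking the positivity hypothesis (iii) of Theorem \ref{thm:rho_properties} yields $\psi(u_i)=\psi(u_{i+1})+1$ for all $i$, an infinite strictly decreasing sequence of positive integers. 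If you want to salvage your two-sided idea, you would have to replace the single pair of relations by such an infinite chain and find some quantity that is simultaneously forced to decrease without bound and to stay positive; that is exactly the descent argument the paper uses.
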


\begin{proof}
Suppose that such a $\CC$ exists.  By setting all the $x_\obj$ equal
to $1$ we obtain a map $\psi : \objects \cD \rightarrow \{
1,2,3,\ldots \}$.

Moreover, $\indt$ consists of the indecomposable objects on the halflines
$(-,n)$ and $(n,-)$ sketched in Theorem \ref{thm:rho_domain}.  Let us
consider the indecomposables $t_i$ on $(-,n)$ and $u_i$ on the
halfline above it.
\[
  \xymatrix @-1.7pc @! {
    & \vdots \ar[dr] & & \vdots \ar[dr] & & \vdots \ar[dr] & & \vdots \ar[dr] & & \vdots \ar[dr] & & \vdots & \\
    \cdots \ar[dr]& & *{\circ} \ar[ur] \ar[dr] & & \obj_4 \ar[ur] \ar[dr] & & u_4 \ar[ur] \ar[dr] & & *{\circ} \ar[ur] \ar[dr] & & *{\circ} \ar[ur] \ar[dr] & & \cdots \\
    & *{\circ} \ar[ur] \ar[dr] & & *{\circ} \ar[ur] \ar[dr] & & \obj_3 \ar[ur] \ar[dr] & & u_3 \ar[ur] \ar[dr] & & *{\circ} \ar[ur] \ar[dr] & & *{\circ} \ar[ur] \ar[dr] & \\
    \cdots \ar[ur]\ar[dr]& & *{\circ} \ar[ur] \ar[dr] & & *{\circ} \ar[ur] \ar[dr] & & \obj_2 \ar[ur] \ar[dr] & & u_2 \ar[ur] \ar[dr] & & *{\circ} \ar[ur] \ar[dr] & & \cdots\\
    & *{\circ} \ar[ur] & & *{\circ} \ar[ur] & & *{\circ} \ar[ur] & & \obj_1 \ar[ur] & & u_1 \ar[ur] & & *{\circ} \ar[ur] & \\
               }
\]
We have $\dim_k \Ext^1(u_i,\obj_i) = 1$ for each $i$.  For $i = 1$
there is an AR triangle $\obj_1 \rightarrow u_2 \rightarrow u_1$
and a non-split distinguished triangle $u_1 \rightarrow 0 \rightarrow
\obj_1$; note that the connecting morphism of this triangle is the
isomorphism $\obj_1 \rightarrow \Sigma u_1$.  The defining properties
of cluster maps now imply that $\CC$ satisfies
\[
  \CC(\obj_1)\CC(u_1) = \CC(u_2) + 1.
\]
Likewise, for each $i \geq 2$ there is an AR triangle $\obj_i
\rightarrow \obj_{i-1} \oplus u_{i+1} \rightarrow u_i$ and a non-split
distinguished triangle $u_i \rightarrow 0 \rightarrow \obj_i$; the
connecting morphism of this triangle is the isomorphism $\obj_i
\rightarrow \Sigma u_i$.  Therefore $\CC$ satisfies
\[
  \mbox{$\CC(\obj_i)\CC(u_i) = \CC(\obj_{i-1})\CC(u_{i+1}) + 1$
        for $i \geq 2$}.
\]

The displayed equations can also be written $x_{\obj_1}\CC(u_1) =
\CC(u_2) + 1$ and $x_{\obj_i}\CC(u_i) = x_{\obj_{i-1}}\CC(u_{i+1}) +
1$ for $i \geq 2$.  Setting all the $x_\obj$ equal to $1$, this
implies that $\psi$ satisfies $\psi(u_i) = \psi(u_{i+1}) + 1$ for each
$i \geq 1$.  But this contradicts that the values of $\psi$ are
positive.
\end{proof}

\section{$\SL_2$-tilings}
\label{sec:tilings}

This section has an application of our results to $\SL_2$-tilings as
introduced in \cite{ARS}.
Similar techniques appeared recently in~\cite{AssemDupont}.

\begin{Definition}
\label{def:SL2}
The following notion was introduced in \cite{ARS}: An $\SL_2$-tiling
of $\BZ^2$ with values in a commutative ring $R$ is a map
$r : \BZ \times \BZ \rightarrow R$ for which each matrix 
\[
 \left(
 \begin{array}{cc}
   r(i,j) & r(i,j+1) \\
   r(i+1,j) & r(i+1,j+1) \\
 \end{array}
 \right)
\]
has determinant $1$.  That is,
\begin{equation}
\label{equ:1}
r(i,j)r(i+1,j+1) - r(i,j+1)r(i+1,j) = 1.
\end{equation}

We consider a modified version of this.  An $\SL_2$-tiling of the half
plane $Q = \{\, (m,n) \,\mid\, m \leq n-2 \,\} \subset \BZ^2$ is a map
$r : Q \rightarrow R$ for which the above matrix has determinant $1$
when it makes sense.  Along the edge of $Q$ the matrix does not make
sense, but here we require that
\[
 \left(
 \begin{array}{cc}
   r(i,i+2) & r(i,i+3) \\
   1 & r(i+1,i+3) \\
 \end{array}
 \right)
\]
has determinant $1$ for each $i$.  That is,
\begin{equation}
\label{equ:9}
  r(i,i+2)r(i+1,i+3) - r(i,i+3) = 1.
\end{equation}
\end{Definition}

\begin{Remark}
\label{rmk:Q}
Note that $Q$ consists precisely of the coordinate pairs in the
coordinate system on the AR quiver of $\cD$ given in Paragraph
\ref{bfhpg:A_infty}.  We can view the elements of $Q$ as arcs $(m,n)$
connecting non-neighbouring integers, or as indecomposable objects of
$\cD$.
\end{Remark}

The following result shows that certain patterns of $1$'s distributed
in $Q$ can be extended to $\SL_2$-tilings.

\begin{Theorem}
\label{thm:34}
Let $\fT$ be a locally finite maximal collection of non-crossing arcs.
Then there is an $\SL_2$-tiling $r : Q \rightarrow \{ 1,2,3,\ldots \}$
which satisfies $r(\ft) = 1$ for each $\ft \in \fT$.
\end{Theorem}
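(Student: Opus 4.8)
The plan is to realise the prescribed pattern of $1$'s as the image of a cluster tilting subcategory under a Caldero--Chapoton map and to read off the $\SL_2$-tiling from the values of that map on the indecomposable objects of $\cD$. Concretely, the collection $\fT$ of non-crossing arcs is, by \cite[thms.\ 4.3 and 4.4]{HJ}, exactly the combinatorial datum of a cluster tilting subcategory $\cts = \add\indt$ of $\cD$; since $\fT$ is locally finite, Setups \ref{set:blanket} and \ref{set:Cluster_maps} hold by \cite[rmk.\ 1.2 and thm.\ 5.2]{HJ}, and moreover Theorem \ref{thm:rho_domain}(i) tells us that $\cE = \cD$, so the cluster map $\CC^\cts$ of Theorem \ref{thm:cluster_map} is defined on every object of $\cD$. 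I would then define $r$ by $r(m,n) = \CC^\cts(d_{(m,n)})\big|_{x_\obj = 1\ \text{for all}\ \obj \in \indt}$, where $d_{(m,n)}$ is the indecomposable object of $\cD$ corresponding to the arc $(m,n)$ via the bijection of Paragraph \ref{bfhpg:A_infty} and Remark \ref{rmk:Q}. By Theorem \ref{thm:rho_properties}(ii)--(iii) each $\CC^\cts(d)$ is a Laurent polynomial with positive integer coefficients in the numerator, so after setting all $x_\obj = 1$ we get a positive integer; thus $r : Q \to \{1,2,3,\ldots\}$, and $r(\ft) = \CC^\cts(\obj)\big|_{x=1} = x_\obj\big|_{x=1} = 1$ for $\ft \in \fT$ by Theorem \ref{thm:rho_properties}(i). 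It remains to verify the $\SL_2$-relations \eqref{equ:1} and \eqref{equ:9}.

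For the interior relation \eqref{equ:1}, fix a lattice square with corners $(i,j), (i,j+1), (i+1,j), (i+1,j+1)$ in $Q$. In the coordinate system of Paragraph \ref{bfhpg:A_infty} the AR quiver is $\BZ A_\infty$, and a short computation with the coordinates shows that the four objects at these positions fit into the standard mesh/AR configuration: writing $a = d_{(i,j)}$, $b = d_{(i,j+1)}$, $c = d_{(i+1,j)}$, $e = d_{(i+1,j+1)}$, one has an AR triangle $a \to b \oplus c \to e \to \Sigma a$ (with the convention that a summand is $0$ when the corresponding coordinate leaves $Q$), and $\dim_k\Ext^1_{\cD}(e,a) = 1$ with connecting morphism an isomorphism $a \to \Sigma e$, so $e \to 0 \to a$ is a non-split distinguished triangle. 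Property \ref{bfhpg:cluster_maps}(iii) of cluster maps applied to these two triangles gives $\CC(a)\CC(e) = \CC(b \oplus c) + \CC(0) = \CC(b)\CC(c) + 1$; setting all $x_\obj = 1$ yields $r(i,j)\,r(i+1,j+1) = r(i,j+1)\,r(i+1,j) + 1$, which is \eqref{equ:1}. For the edge relation \eqref{equ:9} one argues the same way with the degenerate mesh along the bottom edge of $Q$: the object $d_{(i,i+2)}$ sits at a source of $\BZ A_\infty$, its AR triangle is $d_{(i,i+2)} \to d_{(i,i+3)} \to d_{(i+1,i+3)} \to \Sigma d_{(i,i+2)}$ (only one nonzero middle summand), together with the non-split triangle $d_{(i+1,i+3)} \to 0 \to d_{(i,i+2)}$ coming from $\dim_k\Ext^1 = 1$; this is exactly the $i=1$ ``boundary'' computation already carried out in the proof of Theorem \ref{thm:nogo}, and it produces $r(i,i+2)\,r(i+1,i+3) = r(i,i+3) + 1$.

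The main obstacle, and the step deserving the most care, is the bookkeeping of AR triangles in $\BZ A_\infty$ for the category $\cD$ of \cite{HJ}: one must check that the almost-split sequences really do connect the four (or three) lattice points of a unit square as claimed, that the $\Ext^1$ between the two ``outer'' corners is one-dimensional with the connecting map an isomorphism, and that the degenerate cases (a corner or a middle term hitting the boundary of $Q$) behave as a vanishing summand rather than producing extra contributions. This is essentially the structure of the AR quiver recorded in \cite[rmk.\ 1.4]{HJ} together with the explicit mesh relations, and the boundary case has already been handled in the proof of Theorem \ref{thm:nogo}, so I expect this to be a routine but slightly lengthy verification rather than a genuinely hard point. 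A secondary, easy point is that $\CC^\cts(d_{(m,n)})\big|_{x=1}$ is a well-defined positive integer, which is immediate from Theorem \ref{thm:rho_properties}, and that the construction is independent of any choices since $\CC^\cts$ is a genuine function on objects.
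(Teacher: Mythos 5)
Your proposal is correct and follows essentially the same route as the paper's proof: pass from $\fT$ to a cluster tilting subcategory, use Theorems \ref{thm:cluster_map} and \ref{thm:rho_domain}(i) to get a cluster map defined on all of $\cD$, derive the relations \eqref{equ:1} and \eqref{equ:9} from the AR (mesh) triangles together with the triangles $e \to 0 \to \tau e$ as in the proof of Theorem \ref{thm:nogo}, and specialise all $x_\obj$ to $1$, with positivity supplied by Theorem \ref{thm:rho_properties}. The only difference is that you spell out the mesh bookkeeping that the paper leaves as ``manipulations with AR triangles like the ones in the proof of Theorem \ref{thm:nogo}''.
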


\begin{proof}
By Paragraph \ref{bfhpg:A_infty}, the collection $\fT$ corresponds to
the indecomposable objects $\indt$ of a cluster tilting subcategory
$\cts$ of $\cD$, the cluster category of type $A_{\infty}$.  We
must show that there is an $r$ which satisfies $r(\obj) = 1$
for each $\obj \in \indt$.

Theorems \ref{thm:cluster_map} and \ref{thm:rho_domain} say that
$\cts$ gives rise to a cluster map $\CC : \objects\,\cD \rightarrow
\BQ(x_\obj)_{\obj \in \indt}$.
By manipulations with AR triangles like the ones in the proof of
Theorem \ref{thm:nogo}, ones proves that if $(i,i+2)$ is on the edge
of $Q$, then
\[
  \CC(i,i+2)\CC(i+1,i+3) = \CC(i,i+3) + 1.
\]
Likewise, if $(i,j) \in Q$ has $i \leq j-3$, then
\[
  \CC(i,j)\CC(i+1,j+1) = \CC(i,j+1)\CC(i+1,j) + 1.
\]
Hence the map $\CC$, restricted to elements of $Q$, satisfies
equations \eqref{equ:1} and \eqref{equ:9}.

So the map $r$ obtained by setting each $x_{\obj}$ equal to $1$ also
satisfies equations \eqref{equ:1} and \eqref{equ:9}.  Since $\CC(\obj)
= x_{\obj}$ for $\obj \in \indt$ by Theorem \ref{thm:cluster_map}, it
follows that $r(\obj) = 1$ for $\obj \in \indt$.  And the map $r$
takes values in $\{ 1,2,3,\ldots \}$ since each $\CC(i,j)$ is a
non-zero Laurent polynomial whose numerator is a linear combination of
monomials with positive integer coefficients by Theorem
\ref{thm:rho_properties}(ii and iii).
\end{proof}

\begin{Remark}
The main result on $\SL_2$-tilings in \cite{ARS} is that if $S$ is an
admissible frontier of $1$'s in $\BZ^2$, then $S$ can be extended
uniquely to an $\SL_2$-tiling $p$ of $\BZ^2$.

An admissible frontier consists of the coordinate pairs on a doubly
infinite path which is built out of the steps $(1,0)$ and $(0,1)$,
with the stipulation that the steps must alternate infinitely often in
both directions.

Half of a frontier $S$ falls in the half plane $Q$, and it is not hard
to show that the ``zig-zag'' $\indt = S \cap Q$ consists of the
indecomposable objects of a locally bounded cluster tilting
subcategory $\cts$ of $\cD$.  Let us show a sketch of an example.
\[
\vcenter{
  \xymatrix @-2.25pc @! {
    &&&&&&& *{T} &&&&&&&&&&&& \\
    &&&&&&&&&&&&&&&&&&& \\
    &&&&&&&&&&&&& *{\eta(s)} &&&&&&&&& \\
    &&&&&&&*{}&&&& *{} \ar@{-}[uuulll] &&&&&&&&&& \\
    &&&&&&&&&& s \ar@{-}[ur] &&&&&&&&&&& \\
    &&& *{Q} &&&&&&&&&&&&&&&&&& \\
    &&&&&&&&&&&& *{} \ar@{-}[uull] \ar@{.}[uuuuuurrrrrr] &&&&&&&&& \\
    &&&&&&&&&&&&&&&&&&&&& \\ 
    &&&&&&&&&&&&&&&&& \\
    *{}\ar@{--}[r]&*{}\ar@{-}[rrrrrr]&&&*{} \ar@{-}[rrrrr] &&&& & *{} \ar@{-}[uuurrr]\ar@{--}[dl]\ar@{-}[rr]&& *{} \ar@{-}[rrrr]& & & & *{} \ar@{-}[rrrrr]&&&&&*{}\ar@{--}[r]&*{}\\
    &&&&&&&& *{} \ar@{--}[dddrrr] &&&&&&\\
    &&&&&&&&&&&&& \\
    &&&&&&&&&&&&& \\
    &&&&&&&&&&& *{} \ar@{--}[ddll] && \\
    &&&&&&&&&&&&& \\
    &&&&&&&& *{S} &&&&& \\
           }
}
\]
By the correspondence between $\indt$'s and $\fT$'s, Theorem
\ref{thm:34} gives an $\SL_2$-tiling $r$ of $Q$ with $r(\obj) = 1$ for
$\obj \in \indt$.

Now, starting from a ``turning point'' $s$ and using that $p$ and
$r$ are $1$ on $\indt$, equation \eqref{equ:1} alone determines $p$
and $r$ in the region $\eta(s)$ indicated in the sketch.  So $p$ and
$r$ agree on $\eta(s)$.  Moreover, the regions $\eta(s)$ cover $\BZ^2$
when $s$ ranges over the turning points of $S$, and for a given $s$,
it is always possible to move $S$ within $\BZ^2$ such that $s$ and
$\eta(s)$ fall inside $Q$.  In this sense, it is possible to obtain
the tiling $p$ by gluing different tilings $r$.

However, the ``zig-zags'' $\indt = S \cap Q$ correspond to very
special examples of locally bounded cluster tilting subcategories.
The corresponding locally finite maximal collections of non-crossing
arcs are built in the following way, with neighbouring arcs going in
one direction from $a$, neighbouring arcs going in the opposite
direction from $b$, etc.
\[
  \def\objectstyle{\scriptstyle}
  \xymatrix @-7.1pc @! {
       \rule{0ex}{15.0ex} \ar@{--}[r]
     & *{}\ar@{-}[r]
     & c \ar@{-}[r] \ar@/^5.0pc/@{-}[rrrrrrrrrr] \ar@/^5.5pc/@{-}^>>>{\hspace{1.5ex}\cdots}[rrrrrrrrrrr] \ar@/^6.5pc/@{-}[rrrrrrrrrrrrr]
     & *{} \ar@{-}[r] 
     & *{\rule{0.1ex}{0.8ex}} \ar@{-}[r] 
     & *{\rule{0.1ex}{0.8ex}} \ar@{-}[r] 
     & a \ar@{-}[r] \ar@/^1.0pc/@{-}[rr] \ar@/^1.5pc/@{-}^>>>{\hspace{1.5ex}\cdots}[rrr] \ar@/^2.5pc/@{-}[rrrrr]
     & *{\rule{0.1ex}{0.8ex}} \ar@{-}[r] 
     & *{\rule{0.1ex}{0.8ex}} \ar@{-}[r]
     & *{\rule{0.1ex}{0.8ex}} \ar@{-}[r] 
     & *{} \ar@{-}[r] 
     & b \ar@{-}[r] \ar@/^-3.0pc/@{-}[llllll] \ar@/^-3.5pc/@{-}_>>>{\hspace{-85ex}\cdots}[lllllll] \ar@/^-4.5pc/@{-}[lllllllll]
     & *{\rule{0.1ex}{0.8ex}} \ar@{-}[r] 
     & *{\rule{0.1ex}{0.8ex}} \ar@{-}[r]
     & *{} \ar@{-}[r] 
     & *{\rule{0.1ex}{0.8ex}} \ar@{-}[r]
     & *{}\ar@{--}[r]
     & *{}
                    }
\]
It is clear, for instance by mutating a configuration like this, that
much more general $\indt$'s are possible, and Theorem
\ref{thm:34} also works for these.

It would be interesting to obtain a version of Theorem \ref{thm:34} with
$\BZ^2$ instead of $Q$.
\end{Remark}

\end{document}